\newtheorem{thm}{Theorem}[section]
\newtheorem{cor}[thm]{Corollary}
\newtheorem{lem}[thm]{Lemma}
\newtheorem{prop}[thm]{Proposition}
\theoremstyle{mydefinition}
\newtheorem{dfn}[thm]{Definition}
\theoremstyle{myremark}
\newtheorem{rem}[thm]{Remark}
\newtheorem{exa}[thm]{Example}
\title[Ehrhart Polynomials of Order Polytopes]{Ehrhart Polynomials of Order Polytopes: Interpreting Combinatorial Sequences on the OEIS}
\author{Feihu Liu$^{1}$, Guoce Xin$^{2, *}$, and Chen Zhang$^{3}$}
\address{$^{1, 2, 3}$School of Mathematical Sciences, Capital Normal University, Beijing 100048, PR China}
\email{$^1$\texttt{liufeihu7476@163.com}\ \  \& $^2$\texttt{guoce\_xin@163.com}\ \  \& $^3$\texttt{ch\_enz@163.com}}
\date{December 25, 2024}
\thanks{$*$ This work was partially supported by NSFC(12071311).}
\begin{document}
\maketitle

\begin{abstract}
In this paper, we provide an overview of Ehrhart polynomials associated with order polytopes of finite posets, a concept first introduced by Stanley. We focus on their combinatorial interpretations for many sequences listed on the OEIS. We begin by exploring the Ehrhart series of order polytopes resulting from various poset operations, specifically the ordinal sum and direct sum. We then concentrate on the poset $P_\lambda$ associated with the Ferrers diagram of a partition $\lambda = (\lambda_1, \lambda_2, \ldots, \lambda_t)$. When $\lambda = (k, k-1, \ldots, 1)$, the Ehrhart polynomial is a shifted Hankel determinant of the well-known Catalan numbers; when $\lambda = (k, k, \ldots, k)$, the Ehrhart polynomial is solved by Stanley's hook content formula and is used to prove conjectures for the sequence [A140934] on the OEIS. When solving these problems, we rediscover Kreweras' determinant formula for the Ehrhart polynomial $\mathrm{ehr}(\mathcal{O}(P_{\lambda}), n)$ through the application of the Lindstr\"om-Gessel-Viennot lemma on non-intersecting lattice paths.
\end{abstract}

\noindent
\begin{small}
 \emph{Mathematics subject classification}: Primary 05A15; Secondary 05A17, 06A07, 52B11, 52B20.
\end{small}

\noindent
\begin{small}
\emph{Keywords}: Integer sequence; Order polytope; Ehrhart polynomial; Plane partition; Non-intersecting lattice path; Jacobi-Trudi identity.
\end{small}

\section{Introduction}
In the paper, we always use $\mathbb{R}$, $\mathbb{Z}$, $\mathbb{N}$, and $\mathbb{P}$ to represent the set of all real numbers, integers, non-negative integers, and positive integers, respectively.

Let $\mathcal{P}$ be a $d$-dimensional convex polytope in $\mathbb{R}^{N}$. Assume $\mathcal{P}$ is \emph{integral}, i.e., it has only integer vertices. The function
\[
\mathrm{ehr}(\mathcal{P},n)=|n\mathcal{P}\cap \mathbb{Z}^d|,\ \ \ \ \ n=1,2,\ldots,
\]
counts the number of integer points in the $n$-th dilation of $\mathcal{P}$, where $n\mathcal{P}:=\{n \alpha : \alpha \in \mathcal{P}\}$. Ehrhart \cite{Ehrhart62} proved that $\mathrm{ehr}(\mathcal{P},n)$ is a polynomial in $n$ of degree $d$, called the \emph{Ehrhart polynomial} of $\mathcal{P}$. Furthermore, the coefficient of $n^d$ in $\mathrm{ehr}(\mathcal{P},n)$ is equal to the (relative, becomes Euclidean when $N=d$) volume of $\mathcal{P}$, the coefficient of $n^{d-1}$ equals half of the boundary volume of $\mathcal{P}$, and the constant term is always $1$ (see \cite{BeckRobins}).

Define the \emph{Ehrhart series} of $\mathcal{P}$ to be the generating function
\[
\mathrm{Ehr}(\mathcal{P},x)=1+\sum_{n\geq 1}\mathrm{ehr}(\mathcal{P},n)x^n.
\]
It is of the form
\[
\mathrm{Ehr}(\mathcal{P},x)=\frac{h^{*}(x)}{(1-x)^{d+1}},
\]
where $d = \dim\mathcal{P}$ and $h^{*}(x)$ is a polynomial in $x$ of degree $\deg h^{*}(x)\leq d$.

We assume basic knowledge on posets (see \cite{RP.Stanley}). Let $(P,\preceq)$ be a finite \emph{partially ordered set} (or \emph{poset}) on $[p]:=\{1,2,\ldots,p\}$. We say that $a$ \emph{covers }$b$ if $b\prec a$ and there is no $c\in P$ such that $b\prec c\prec a$. The \emph{Hasse diagram} of a finite poset $P$ is the graph whose vertices are the elements of $P$, whose edges are the cover relations, and such that if $b\prec a$ then $a$ is drawn above $b$.

Order polytopes associated with posets were first introduced and studied by Stanley. 
\begin{dfn}[Stanley \cite{StanleyDCG}]\label{Definitorderp}
Given a poset $P$ on the set $[p]$, we associate a polytope $\mathcal{O}(P)$, called the \emph{order polytope} of $P$. It is the polytope in $\mathbb{R}^p$ defined by
\begin{align*}
x_i &\leq x_j \qquad \text{if} \qquad i\prec_{P} j;
\\ 0\leq x_i&\leq 1 \qquad \text{for} \qquad  1\leq i\leq p.
\end{align*}
\end{dfn}

The order polytope is an integral polytope, and $\dim \mathcal{O}(P)=p$ \cite{StanleyDCG}. The order polytopes of various special posets have been studied by many scholars. For example, the zig-zag poset \cite{Coons23} and generalized snake poset (including the snake poset and ladder poset) \cite{BellBraun22}. Many authors have generalized order polytopes. For instance, double poset polytopes \cite{Chappell17,HibiTsuchiya17}, marked poset polytopes \cite{ArdilaBliem11}, and enriched poset polytopes \cite{Okada24}.

Let $P$ be a finite poset of cardinality $p \in \mathbb{P}$. The set $[p]$ with its usual order forms a poset with $p$ elements. It is denoted $\mathbf{p}$. We identify a linear extension $\sigma : P\rightarrow \mathbf{p}$ of $P$ with a permutation $\pi=\sigma^{-1}(1)\sigma^{-1}(2)\cdots\sigma^{-1}(p)$ of the set of labels of $P$. The set of all permutations of $[p]$ obtained in this way is denoted $\mathcal{L}(P)$ and is called the \emph{Jordan-H\"older set} of $P$.

The \emph{order polynomial} $\Omega_{P}(m)$ \cite{RP.Stanley} is defined as the number of order-preserving maps $\tau: P\rightarrow \mathbf{m}$. This is a polynomial function in $m$ of degree $p$. The generating function of $\Omega_{P}(m)$ \cite{RP.Stanley} is given by
\[
\sum_{m\geq 1}\Omega_{P}(m)x^m=\frac{\sum_{\pi \in \mathcal{L}(P)}x^{1+d(\pi)}}{(1-x)^{p+1}},
\]
where $d(\pi)$ is the number of descents of permutation $\pi$.

Stanley \cite{StanleyDCG} showed that the Ehrhart polynomial of $\mathcal{O}(P)$ is given by
\[
\mathrm{ehr}(\mathcal{O}(P),m)=\Omega_{P}(m+1).
\]
Therefore, we have
\begin{equation}\label{EhrJordanH}
\mathrm{Ehr}(\mathcal{O}(P),x)=\frac{\sum_{\pi \in \mathcal{L}(P)}x^{d(\pi)}}{(1-x)^{p+1}}.
\end{equation}

In this paper, we focus on the enumerative properties of families of order polytopes. Firstly, we consider the effect on the Ehrhart series (or polynomials) under several operations on posets. For the ordinal sum of the posets, the corresponding Ehrhart series is almost related by product. It is closely related to the Riordan array (see Section 2). For the direct sum of the posets, the corresponding Ehrhart series are related by the Hadamard product. For the dual of poset $P$, the Ehrhart series remains unchanged. For the ordinal product and direct product of two posets, the calculation of Ehrhart is difficult.

Secondly, we consider a class of posets and their variants defined in Section 4. The numerator of the Ehrhart series is given by the descent of multipermutations on multiset (see Theorem \ref{rkrkline}). In some special cases, they have fascinating combinatorial properties (see Propositions \ref{PropEhrCase2}, \ref{sl2set00}, \ref{sl2set1}, \ref{sl2set2}, and \ref{sl2set3}).

Thirdly, we consider the order polytope $\mathcal{O}(P_{\lambda})$ of the poset $P_\lambda$ related to the Ferrers diagram (see Section \ref{sec-Ferrers}). For $\lambda=(\lambda_1,\lambda_2,\ldots,\lambda_t)$, the Ehrhart polynomial $\mathrm{ehr}(\mathcal{O}(P_\lambda),m)$ of the order polytope $\mathcal{O}(P_{\lambda})$ counts the number of plane partitions such that the $i$-th row has at most $\lambda_i$ parts, and the largest part is less than or equal to $n$. We reobtain a determinant formula for the Ehrhart polynomial $\mathrm{ehr}(\mathcal{O}(P_{\lambda}),n)$ for general $\lambda$ using the Lindstr\"om-Gessel-Viennot lemma \cite{GesselViennot,Lindstrom} on non-intersecting lattice paths. The determinant formula for this counting problem is first given by Kreweras \cite[Section 2.3.7]{G.Kreweras}. As applications, we study two special cases as follows.

\begin{enumerate}
  \item When $\lambda=(k, k-1,\ldots ,1)$, the Ehrhart polynomial $\mathrm{ehr}(\mathcal{O}(P_{\lambda}),n)$ is a shifted Catalan Hankel determinant (see Theorem \ref{CnjOnekk-1}).

  \item The rectangular case $\lambda=(k,k,\ldots k)=(k^t)$. An equivalent definition is referred to in \cite{SantosStump17}. When $\lambda=(k,k)$, it is called the ladder poset in \cite{BellBraun22}, where the authors studied the volume of $\mathcal{O}(P_{(k,k)})$. We show that the numerator of $\mathrm{Ehr}(\mathcal{O}(P_{(k,k)}),x)$ is related to the Narayana numbers. We also prove some conjectures in \cite[A140934]{Sloane23} related to $\mathcal{O}(P_{\lambda})$ (see Theorem \ref{hhsnthmschu}, Proposition \ref{Proposit1409}, and Theorem \ref{EhrhOkk-Naray}).
\end{enumerate}

Finally, we define a class of posets, denoted $\bar{P}_{(k,k)}$. The Ehrhart polynomial $\mathrm{ehr}(\mathcal{O}(\bar{P}_{(k,k)}),n)$ is related to the Stirling number of the second kind (see Theorem \ref{stirlssnk}). The numerator of $\mathrm{Ehr}(\mathcal{O}(\bar{P}_{(k,k)}),x)$ is related to the second-order Eulerian triangle (see Theorem \ref{stirlssnkkEuler}).

Another contribution of this paper is to give combinatorial interpretations of many sequences on the OEIS website \cite{Sloane23} using order polytopes.

This paper is organized as follows. In Sections \ref{sec-sum} and \ref{sec-prod}, we study the Ehrhart series when some operations are performed on the posets, that is, direct sum, ordinal sum, direct product, ordinal product, and dual. In Section \ref{sec-Ipr}, we consider a class of posets and their variants. The numerator of its Ehrhart series is given by the descent of multipermutations on a multiset. In Section \ref{sec-Ferrers}, we provide a determinant formula for $\mathcal{O}(P_{\lambda})$, which arises from two different combinatorial interpretations. We also prove some conjectures in \cite[A140934]{Sloane23}. In Section \ref{sec-Pkk}, we study the Ehrhart polynomial and Ehrhart series of $\mathcal{O}(\bar{P}_{(k,k)})$.

\section{Dual, Direct Sum, and Ordinal Sum}\label{sec-sum}
In this section, we introduce three operations that can be performed on posets. The Ehrhart polynomial of the resulting new poset, after applying each of these operations, can be determined by the Ehrhart polynomials of the original posets.

\subsection{Preliminaries}

We first consider the \emph{dual} of a poset $P$. This is the poset $P^*$ on the same set as $P$, but $s\preceq t$ in $P^*$ if and only if $t\preceq s$ in $P$. If $P$ and $P^*$ are isomorphic, then $P$ is called \emph{self-dual}. The following result is obvious.
\begin{prop}
Let $P$ be a finite poset of cardinality $p$. Then
\[
\mathrm{Ehr}(\mathcal{O}(P),x)=\mathrm{Ehr}(\mathcal{O}(P^*),x).
\]
\end{prop}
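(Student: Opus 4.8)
The plan is to exhibit a lattice-preserving affine isomorphism carrying $\mathcal{O}(P)$ onto $\mathcal{O}(P^{*})$, and then to observe that any such map induces, for every $n\ge 1$, a bijection between the lattice points of the $n$-th dilates; this forces the two Ehrhart polynomials, and hence the two Ehrhart series, to coincide. The map to use is the affine involution $\iota\colon\mathbb{R}^{p}\to\mathbb{R}^{p}$ defined by $\iota(x)=\mathbf{1}-x$, where $\mathbf{1}=(1,\dots,1)$; geometrically it is the reflection of the unit cube $[0,1]^{p}$ through its centre.

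First I would verify that $\iota(\mathcal{O}(P))=\mathcal{O}(P^{*})$. Put $y=\iota(x)$, so $y_{i}=1-x_{i}$. The facet conditions $0\le x_{i}\le 1$ are equivalent to $0\le y_{i}\le 1$, and an inequality $x_{i}\le x_{j}$ is equivalent to $y_{j}\le y_{i}$. Since $i\prec_{P}j$ is by definition the same as $j\prec_{P^{*}}i$, the full system of order inequalities defining $\mathcal{O}(P)$ is transformed by $\iota$ precisely into the system defining $\mathcal{O}(P^{*})$; hence $x\in\mathcal{O}(P)$ if and only if $\iota(x)\in\mathcal{O}(P^{*})$. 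Rescaling by $n$, the map $x\mapsto n\mathbf{1}-x$ sends $n\mathcal{O}(P)$ bijectively onto $n\mathcal{O}(P^{*})$; and because $n\mathbf{1}\in\mathbb{Z}^{p}$ while $x\mapsto-x$ preserves $\mathbb{Z}^{p}$, this map restricts to a bijection between $n\mathcal{O}(P)\cap\mathbb{Z}^{p}$ and $n\mathcal{O}(P^{*})\cap\mathbb{Z}^{p}$. Consequently $\mathrm{ehr}(\mathcal{O}(P),n)=\mathrm{ehr}(\mathcal{O}(P^{*}),n)$ for all $n\ge 1$, and since both Ehrhart series have constant term $1$ they agree identically.

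There is essentially no obstacle, which is exactly why the statement is flagged as obvious; the only point deserving a word of care is the integrality of the translation vector $n\mathbf{1}$, which is what upgrades $\iota$ from a mere affine equivalence to a unimodular (lattice) equivalence and thereby guarantees the dilate-by-dilate bijection of integer points. One could in principle instead argue from \eqref{EhrJordanH}, comparing the descent generating functions over the Jordan--H\"older sets $\mathcal{L}(P)$ and $\mathcal{L}(P^{*})$, but that comparison is more delicate (reversal of permutations flips descents rather than preserving them), so the reflection argument is both shorter and more transparent.
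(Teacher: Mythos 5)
Your proof is correct and complete. The paper itself offers no argument here -- it simply declares the proposition obvious -- so there is no "paper proof" to compare against; your reflection map $x\mapsto \mathbf{1}-x$ is the standard way to make the claim rigorous, and you correctly identify the one point that needs checking, namely that $x\mapsto n\mathbf{1}-x$ is a lattice-preserving bijection between $n\mathcal{O}(P)\cap\mathbb{Z}^p$ and $n\mathcal{O}(P^*)\cap\mathbb{Z}^p$. Your closing remark is also well taken: the alternative route through \eqref{EhrJordanH} is genuinely more delicate than it looks, since reversing a linear extension sends $d(\pi)$ to $p-1-d(\pi)$ and the $h^*$-polynomial of an order polytope is not palindromic in general; one must instead either relabel $P^*$ naturally and argue again, or pass through the equality of order polynomials $\Omega_P(m)=\Omega_{P^*}(m)$ via $\tau\mapsto m+1-\tau$, which is just the combinatorial shadow of your reflection. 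So the geometric argument you chose is indeed the cleanest.
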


Secondly, the \emph{direct sum} of $P_1$ and $P_2$ is the poset $P_1+ P_2$ on the disjoint union $P_1 \uplus P_2$ such that $s\preceq t$ in $P_1+ P_2$ if (i) $s,t\in P_1$ and $s\preceq t$ in $P_1$, or (ii) $s,t\in P_2$ and $s\preceq t$ in $P_2$. Obviously, the Ehrhart polynomial of $\mathcal{O}(P_1+P_2)$ is given by
\begin{equation}\label{DirecSumHP}
\mathrm{ehr}(\mathcal{O}(P_1+P_2),n)=\mathrm{ehr}(\mathcal{O}(P_1),n)\cdot \mathrm{ehr}(\mathcal{O}(P_2),n).
\end{equation}

\begin{prop}\label{PropDirecSum}
Let $P_1$ and $P_2$ be two posets. The Ehrhart series of $\mathcal{O}(P_1+P_2)$ is given by
\[
\mathrm{Ehr}(\mathcal{O}(P_1+P_2),x)=\mathrm{Ehr}(\mathcal{O}(P_1),x)\ast \mathrm{Ehr}(\mathcal{O}(P_2),x)=\sum_{n\geq 0}\mathrm{ehr}(\mathcal{O}(P_1),n)\cdot \mathrm{ehr}(\mathcal{O}(P_2),n)x^n,
\]
where we use $\ast$ to denote the \emph{Hadamard product} (see \cite{RP.Stanley}).
\end{prop}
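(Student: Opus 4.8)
The plan is to reduce everything to the elementary identity \eqref{DirecSumHP}, which in turn comes from the fact that the order polytope of a direct sum splits as a Cartesian product. First I would observe that since $P_1+P_2$ carries no relations between the elements of $P_1$ and those of $P_2$, the defining inequalities of $\mathcal{O}(P_1+P_2)$ in Definition \ref{Definitorderp} decouple into the inequalities defining $\mathcal{O}(P_1)$ on the first $p_1:=|P_1|$ coordinates and those defining $\mathcal{O}(P_2)$ on the remaining $p_2:=|P_2|$ coordinates; hence $\mathcal{O}(P_1+P_2)=\mathcal{O}(P_1)\times\mathcal{O}(P_2)$, and consequently
\[
n\mathcal{O}(P_1+P_2)\cap\mathbb{Z}^{p_1+p_2}=\bigl(n\mathcal{O}(P_1)\cap\mathbb{Z}^{p_1}\bigr)\times\bigl(n\mathcal{O}(P_2)\cap\mathbb{Z}^{p_2}\bigr)
\]
for every $n\in\mathbb{N}$. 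Counting lattice points on both sides gives \eqref{DirecSumHP}, valid for all $n\geq 0$ once we recall that the Ehrhart polynomial has constant term $\mathrm{ehr}(\mathcal{O}(P),0)=1$.

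Next I would unwind the definition of the Hadamard product: if $F(x)=\sum_{n\geq 0}a_nx^n$ and $G(x)=\sum_{n\geq 0}b_nx^n$, then $F(x)\ast G(x)=\sum_{n\geq 0}a_nb_nx^n$. Applying this with $a_n=\mathrm{ehr}(\mathcal{O}(P_1),n)$ and $b_n=\mathrm{ehr}(\mathcal{O}(P_2),n)$, and using $\mathrm{ehr}(\mathcal{O}(P_i),0)=1$ so that $\mathrm{Ehr}(\mathcal{O}(P_i),x)=\sum_{n\geq 0}\mathrm{ehr}(\mathcal{O}(P_i),n)x^n$, I obtain
\[
\mathrm{Ehr}(\mathcal{O}(P_1),x)\ast\mathrm{Ehr}(\mathcal{O}(P_2),x)=\sum_{n\geq 0}\mathrm{ehr}(\mathcal{O}(P_1),n)\,\mathrm{ehr}(\mathcal{O}(P_2),n)\,x^n.
\]
Substituting \eqref{DirecSumHP} into the right-hand side rewrites this as $\sum_{n\geq 0}\mathrm{ehr}(\mathcal{O}(P_1+P_2),n)x^n=\mathrm{Ehr}(\mathcal{O}(P_1+P_2),x)$, which is exactly the asserted identity.

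There is essentially no hard step here; the only points requiring care are the bookkeeping at $n=0$ (matching constant terms so that the Hadamard product, a purely formal operation on series indexed from $0$, aligns with the Ehrhart series as written) and the justification that $\mathcal{O}(P_1+P_2)$ is genuinely the product polytope $\mathcal{O}(P_1)\times\mathcal{O}(P_2)$, not merely a polytope with the product lattice-point counts. An alternative, more combinatorial route — worth noting — starts from \eqref{EhrJordanH} and uses that $\mathcal{L}(P_1+P_2)$ is obtained by shuffling linear extensions of $P_1$ and $P_2$; tracking descents through such shuffles reproves \eqref{DirecSumHP}, but this is strictly more work than the product-of-polytopes argument.
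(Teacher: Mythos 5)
Your proof is correct and follows the same route the paper implicitly takes: equation \eqref{DirecSumHP} (which the paper asserts as obvious, and which you justify via $\mathcal{O}(P_1+P_2)=\mathcal{O}(P_1)\times\mathcal{O}(P_2)$) combined with the definition of the Hadamard product. You have simply supplied the details the paper omits.
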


Finally, we introduce the \emph{ordinal sum} of posets $P_1$ and $P_2$, which is the poset $P_1\oplus P_2$ on the disjoint union $P_1 \uplus P_2$ such that $s\preceq t$ in $P_1 \oplus P_2$ if (i) $s,t\in P_1$ and $s\preceq t$ in $P_1$, or (ii) $s,t\in P_2$ and $s\preceq t$ in $P_2$, or (iii) $s\in P_1$ and $t\in P_2$.

\begin{thm}\label{OrdinalSumEhr}
Let $P_1$ and $P_2$ be two disjoint posets. The Ehrhart series of the order polytope of $P_1 \oplus P_2$ is given by
\[
\mathrm{Ehr}(\mathcal{O}(P_1 \oplus P_2),x)=\mathrm{Ehr}(\mathcal{O}(P_1),x)\cdot \mathrm{Ehr}(\mathcal{O}(P_2),x)\cdot (1-x).
\]
\end{thm}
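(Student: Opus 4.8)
The plan is to convert the geometric claim into a statement about order polynomials via Stanley's identity $\mathrm{ehr}(\mathcal{O}(P),m)=\Omega_P(m+1)$, and then to recognize the resulting identity as a convolution. Assume first that $P_1$ and $P_2$ are both nonempty; the degenerate case where one factor is empty is immediate since $\mathrm{Ehr}(\mathcal{O}(\emptyset),x)=1/(1-x)$. The starting observation is that in $P_1\oplus P_2$ every element of $P_1$ lies below every element of $P_2$ and no other relations are added, so an order-preserving map $\phi\colon P_1\oplus P_2\to\mathbf m$ is exactly a pair of order-preserving maps $\phi|_{P_1}\colon P_1\to\mathbf m$ and $\phi|_{P_2}\colon P_2\to\mathbf m$ subject to the single constraint $\max\phi(P_1)\le\min\phi(P_2)$.

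Next I would condition on $j:=\max\phi(P_1)\in\{1,\dots,m\}$. The number of order-preserving maps $P_1\to\mathbf m$ with maximum exactly $j$ is $\Omega_{P_1}(j)-\Omega_{P_1}(j-1)$, and once $j$ is fixed the restriction $\phi|_{P_2}$ ranges over all order-preserving maps from $P_2$ into the $(m-j+1)$-element chain $\{j,j+1,\dots,m\}$, of which there are $\Omega_{P_2}(m-j+1)$. Hence
\[
\Omega_{P_1\oplus P_2}(m)=\sum_{j=1}^{m}\bigl(\Omega_{P_1}(j)-\Omega_{P_1}(j-1)\bigr)\,\Omega_{P_2}(m-j+1).
\]
Setting $m=n+1$ and writing $e_i(n):=\mathrm{ehr}(\mathcal{O}(P_i),n)=\Omega_{P_i}(n+1)$ with the convention $e_i(-1)=\Omega_{P_i}(0)=0$, this becomes the Cauchy-type convolution
\[
\mathrm{ehr}(\mathcal{O}(P_1\oplus P_2),n)=\sum_{i=0}^{n}\bigl(e_1(i)-e_1(i-1)\bigr)\,e_2(n-i).
\]

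Finally I would pass to generating functions. Because $\sum_{n\ge 0}\bigl(e_1(n)-e_1(n-1)\bigr)x^n=(1-x)\,\mathrm{Ehr}(\mathcal{O}(P_1),x)$, the convolution above multiplies into
\[
\mathrm{Ehr}(\mathcal{O}(P_1\oplus P_2),x)=(1-x)\,\mathrm{Ehr}(\mathcal{O}(P_1),x)\cdot\mathrm{Ehr}(\mathcal{O}(P_2),x),
\]
once one checks that the constant terms agree ($\mathrm{ehr}(\mathcal{O}(P_1\oplus P_2),0)=\Omega_{P_1\oplus P_2}(1)=1$, matching $1\cdot 1\cdot 1$). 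I expect the only delicate point to be the boundary bookkeeping — the value $j$ that is simultaneously the top of the image of $P_1$ and (possibly) the bottom of the image of $P_2$, together with the degenerate index $i=0$ — which must be handled so that nothing is double-counted; the rest is routine.

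An alternative, equally short argument uses the descent formula \eqref{EhrJordanH} directly: label $P_1\oplus P_2$ so that the elements of $P_1$ receive the labels $1,\dots,p_1$ and those of $P_2$ the labels $p_1+1,\dots,p_1+p_2$. Then every linear extension of $P_1\oplus P_2$ is a concatenation $\pi_1\pi_2$ with $\pi_i\in\mathcal{L}(P_i)$, the step joining $\pi_1$ to $\pi_2$ is always an ascent, so $d(\pi)=d(\pi_1)+d(\pi_2)$; the numerators in \eqref{EhrJordanH} therefore multiply while the denominators contribute $(1-x)^{p_1+p_2+1}$ against $(1-x)^{p_1+1}(1-x)^{p_2+1}$, producing exactly the extra factor $1-x$. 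This variant uses the standard fact that the numerator $\sum_{\pi\in\mathcal{L}(P)}x^{d(\pi)}$ is independent of the chosen labeling of $P$.
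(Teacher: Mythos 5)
Your proposal is correct, and in fact contains two complete arguments. Your ``alternative'' argument at the end is essentially the paper's own proof: the paper labels $P_1\oplus P_2$ so that $P_2$ receives the shifted labels, concatenates linear extensions via a bijection $\varphi(\pi_1;\pi_2)$, observes that the junction is always an ascent so that $d(\varphi(\pi_1;\pi_2))=d(\pi_1)+d(\pi_2)$, and lets the mismatch in the exponents of $(1-x)$ in \eqref{EhrJordanH} supply the extra factor $1-x$. Your primary argument is a genuinely different route: it stays entirely on the side of order polynomials, decomposing an order-preserving map $\phi\colon P_1\oplus P_2\to\mathbf m$ by the value $j=\max\phi(P_1)$ to obtain the convolution
\[
\Omega_{P_1\oplus P_2}(m)=\sum_{j=1}^{m}\bigl(\Omega_{P_1}(j)-\Omega_{P_1}(j-1)\bigr)\Omega_{P_2}(m-j+1),
\]
and then reading off the product of generating functions; the bookkeeping you flag (the shared value $j$, the convention $\Omega_{P_i}(0)=0$, the constant terms) is handled correctly and there is no double-counting since $j$ is determined by $\phi$. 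The descent/linear-extension proof is shorter because it piggybacks on \eqref{EhrJordanH} and makes the multiplicativity of the $h^*$-numerators transparent (which is what the paper exploits in Corollary \ref{CorKsumP} and the Riordan-array discussion), while your convolution argument is more self-contained --- it needs only Stanley's identity $\mathrm{ehr}(\mathcal{O}(P),n)=\Omega_P(n+1)$ and no facts about labelings or descents, at the cost of obscuring the combinatorics of the numerator.
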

\begin{proof}
Let $\# P_1=n$ and $\# P_2=m$. Then $\# (P_1 \oplus P_2)=n+m$. By \eqref{EhrJordanH}, we just need to prove that
\begin{equation}\label{ProoJHS}
\sum_{\pi \in \mathcal{L}(P_1 \oplus P_2)}x^{d(\pi)}=\sum_{\pi \in \mathcal{L}(P_1)}x^{d(\pi)}\cdot \sum_{\pi \in \mathcal{L}(P_2)}x^{d(\pi)}=\sum_{\pi_1 \in \mathcal{L}(P_1);\ \pi_2 \in \mathcal{L}(P_2)}x^{d(\pi_1)+d(\pi_2)}.
\end{equation}
If $\pi_1=(a_1a_2\cdots a_n)\in \mathcal{L}(P_1)$ and $\pi_2=(b_1b_2\cdots b_m)\in \mathcal{L}(P_2)$, then we define
\[
\varphi (\pi_1;\pi_2)=(a_1a_2\cdots a_n(b_1+n)(b_2+n)\cdots (b_m+n)).
\]
Furthermore, $\varphi (\pi_1;\pi_2)\in \mathcal{L}(P_1 \oplus P_2)$. Conversely, suppose $(u_1u_2\cdots u_nv_1v_2\cdots v_m)\in \mathcal{L}(P_1 \oplus P_2)$. By the definition of $P_1 \oplus P_2$, we have $v_i\geq n+1$ for $1\leq i\leq m$. Let $\pi_1=(u_1u_2\cdots u_n)$ and $\pi_2=((v_1-n)(v_2-n)\cdots(v_m-n))$, we have $\pi_1\in \mathcal{L}(P_1)$, $\pi_2\in\mathcal{L}(P_2)$, and $\varphi(\pi_1;\pi_2)=(u_1u_2\cdots u_nv_1v_2\cdots v_m)$. Therefore, $\varphi$ is a bijection. Moreover, $d(\varphi (\pi_1;\pi_2))=d(\pi_1)+d(\pi_2)$. Hence, Equation \eqref{ProoJHS} holds, which completes the proof.
\end{proof}

An equivalent form of the theorem appeared in \cite[Proposition 6.5]{HaaseKohlTsuchiya} recently. Let $P^{\oplus k}$ denote $P\oplus P\oplus \cdots \oplus P$, with $k$ copies of the $P$'s. Then the following corollary is immediate.
\begin{cor}\label{CorKsumP}
Suppose $p, k\in \mathbb{P}$. Let $P$ be a finite poset of cardinality $p$. If the Ehrhart series of $\mathcal{O}(P)$ is
\[
\mathrm{Ehr}(\mathcal{O}(P),x)=\frac{h^{*}(x)}{(1-x)^{p+1}},
\]
then the Ehrhart series of $\mathcal{O}(P^{\oplus k})$ is
\[
\mathrm{Ehr}(\mathcal{O}(P^{\oplus k}),x)=\frac{(h^*(x))^k}{(1-x)^{pk+1}}.
\]
\end{cor}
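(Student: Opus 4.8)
The plan is to derive this corollary directly from Theorem \ref{OrdinalSumEhr} by induction on $k$, since $P^{\oplus k}$ is built up by iterated ordinal sums. The base case $k=1$ is just the hypothesis. For the inductive step I would write $P^{\oplus k} = P^{\oplus(k-1)} \oplus P$ and apply Theorem \ref{OrdinalSumEhr} to the two summands $P^{\oplus(k-1)}$ and $P$, which are disjoint posets of cardinalities $p(k-1)$ and $p$ respectively.

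Concretely, assuming the statement for $k-1$, Theorem \ref{OrdinalSumEhr} gives
\[
\mathrm{Ehr}(\mathcal{O}(P^{\oplus k}),x) = \mathrm{Ehr}(\mathcal{O}(P^{\oplus(k-1)}),x)\cdot \mathrm{Ehr}(\mathcal{O}(P),x)\cdot (1-x) = \frac{(h^*(x))^{k-1}}{(1-x)^{p(k-1)+1}}\cdot \frac{h^*(x)}{(1-x)^{p+1}}\cdot (1-x).
\]
Then I would simply collect powers of $(1-x)$ in the denominator: the exponent is $p(k-1)+1 + p+1 - 1 = pk+1$, which yields $(h^*(x))^k/(1-x)^{pk+1}$ and completes the induction. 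As a sanity check one can note $\dim \mathcal{O}(P^{\oplus k}) = pk$, matching the denominator exponent $pk+1$ in the standard form of an Ehrhart series.

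There is essentially no obstacle here; the only thing to be slightly careful about is the bookkeeping of the $(1-x)$ factors and confirming that the single extra factor of $(1-x)$ introduced at each application of Theorem \ref{OrdinalSumEhr} exactly compensates for the "$+1$" appearing in each of the two denominators being multiplied, so that after $k-1$ applications the denominator is $(1-x)^{pk+1}$ rather than $(1-x)^{pk+k}$. One could alternatively phrase the whole argument in one line as a $(k-1)$-fold iteration of Theorem \ref{OrdinalSumEhr} without formal induction, but the inductive presentation is cleanest.
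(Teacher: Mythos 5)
Your proof is correct and is exactly the argument the paper has in mind: the paper states the corollary as "immediate" from Theorem \ref{OrdinalSumEhr}, and your $(k-1)$-fold iteration (with the bookkeeping $p(k-1)+1+p+1-1=pk+1$) is precisely that omitted verification.
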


\begin{exa}\label{ExampIK}
Let $I_k$ be a $k$-element chain. Then $I_1$ is a $1$-element chain, i.e., an isolated vertex. We know that $\mathrm{Ehr}(\mathcal{O}(I_1),x)=\frac{1}{(1-x)^{2}}$. The poset $I_k$ is the $k$-fold ordinal sum of $I_1$.
By Corollary \ref{CorKsumP}, we have $\mathrm{Ehr}(\mathcal{O}(I_k),x)=\frac{1}{(1-x)^{k+1}}$.
\end{exa}

Now, we introduce a powerful definition in enumerative combinatorics.
\begin{dfn}{\em \cite{LW. Shapiro}}
Let $g(x)=\sum_{n=0}^{\infty}g_nx^n$ and $f(x)=\sum_{n=0}^{\infty}f_nx^n$.
A \emph{Riordan array} is
\[
M=(g(x), f(x))=[g(x), g(x)f(x), g(x)f^{2}(x), \ldots],
\]
whose generating function of the $i$-th column is $M_i(x)=g(x)f^{i}(x)$. The $M=(g(x), f(x))$ is called proper if $g_0=1, f_0=0$ and $f_1\neq 0$. In this case, $M=(g(x), f(x))$ is an infinite lower triangular matrix.
\end{dfn}

For more results on the Riordan array and applications, see \cite{LW. SSBCHMW}. For Corollary \ref{CorKsumP}, we formally define $\mathrm{Ehr}(\mathcal{O}(P^{\oplus 0}),x)=\frac{1}{1-x}$ when $k=0$. We consider $x^k\cdot \mathrm{Ehr}(\mathcal{O}(P^{\oplus k}),x)$ for $k\geq 0$. Now, we can construct a Riordan array $\left(\frac{1}{1-x},\frac{xh^*(x)}{(1-x)^p}\right)$ that is an infinite lower triangular matrix. This will give us many interesting counting results, as shown in the following subsections.

\begin{cor}\label{GluedP1P2}
Let $P_1$ and $P_2$ be two disjoint posets. The poset $P_1$ has a unique maximal element $u$. The poset $P_2$ has a unique minimal element $v$. If the maximal element $u$ of the poset $P_1$ and the minimal element
$v$ of the poset $P_2$ are glued together in the Hasse diagram, denote the new poset as $P_1\diamond P_2$, then the Ehrhart series of $\mathcal{O}(P_1\diamond P_2)$ is given by
\[
\mathrm{Ehr}(\mathcal{O}(P_1\diamond P_2),x)=\mathrm{Ehr}(\mathcal{O}(P_1),x)\cdot \mathrm{Ehr}(\mathcal{O}(P_2),x)\cdot (1-x)^2.
\]
\end{cor}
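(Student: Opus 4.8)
The plan is to reduce $P_1 \diamond P_2$ to an ordinal sum so that Theorem \ref{OrdinalSumEhr} applies twice. The key observation is that gluing the maximal element $u$ of $P_1$ to the minimal element $v$ of $P_2$ produces a poset whose single "pinch point" $w$ (the identified element) lies above everything in $P_1 \setminus \{u\}$ and below everything in $P_2 \setminus \{v\}$. In other words, writing $P_1' = P_1 \setminus \{u\}$ and $P_2' = P_2 \setminus \{v\}$, we have an isomorphism
\[
P_1 \diamond P_2 \;\cong\; P_1' \oplus I_1 \oplus P_2',
\]
where $I_1$ is the one-element poset consisting of the glued vertex $w$. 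Indeed, since $u$ is the unique maximal element of $P_1$, it is comparable to (and above) every element of $P_1'$, and similarly $v$ is below every element of $P_2'$; after identification, $w$ sits above all of $P_1'$ and below all of $P_2'$, and there are no other new relations. This is precisely the definition of the iterated ordinal sum.

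**Carrying out the computation**

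First I would establish the isomorphism above carefully, checking that the order relations match on each of the three blocks and across blocks — the only subtle point is verifying that no spurious relations are introduced between $P_1'$ and $P_2'$ directly (they are related only through $w$, exactly as in an ordinal sum). Next, since the ordinal sum is associative, I apply Theorem \ref{OrdinalSumEhr} twice:
\[
\mathrm{Ehr}(\mathcal{O}(P_1 \diamond P_2),x)
= \mathrm{Ehr}(\mathcal{O}(P_1'),x)\cdot \mathrm{Ehr}(\mathcal{O}(I_1),x)\cdot \mathrm{Ehr}(\mathcal{O}(P_2'),x)\cdot (1-x)^2.
\]
By Example \ref{ExampIK}, $\mathrm{Ehr}(\mathcal{O}(I_1),x) = 1/(1-x)^2$, so this simplifies to
\[
\mathrm{Ehr}(\mathcal{O}(P_1 \diamond P_2),x)
= \frac{\mathrm{Ehr}(\mathcal{O}(P_1'),x)\cdot \mathrm{Ehr}(\mathcal{O}(P_2'),x)}{(1-x)^2}\cdot (1-x)^2 \cdot (1-x)^2.
\]
Wait — I need to be careful here: this route forces me to relate $\mathrm{Ehr}(\mathcal{O}(P_i'),x)$ back to $\mathrm{Ehr}(\mathcal{O}(P_i),x)$. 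Since $P_1 = P_1' \oplus I_1$ (because $u$ is the unique maximal element, hence above all of $P_1'$), Theorem \ref{OrdinalSumEhr} gives $\mathrm{Ehr}(\mathcal{O}(P_1),x) = \mathrm{Ehr}(\mathcal{O}(P_1'),x)\cdot \frac{1}{(1-x)^2}\cdot(1-x) = \frac{\mathrm{Ehr}(\mathcal{O}(P_1'),x)}{1-x}$, so $\mathrm{Ehr}(\mathcal{O}(P_1'),x) = (1-x)\,\mathrm{Ehr}(\mathcal{O}(P_1),x)$, and symmetrically for $P_2$. Substituting these in yields exactly
\[
\mathrm{Ehr}(\mathcal{O}(P_1 \diamond P_2),x) = \mathrm{Ehr}(\mathcal{O}(P_1),x)\cdot \mathrm{Ehr}(\mathcal{O}(P_2),x)\cdot (1-x)^2.
\]

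**Anticipated obstacle**

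The only genuine work is the poset isomorphism $P_1 \diamond P_2 \cong P_1' \oplus I_1 \oplus P_2'$ and, relatedly, $P_i \cong P_i' \oplus I_1$ (resp. $I_1 \oplus P_i'$); once these are in hand everything is bookkeeping with Theorem \ref{OrdinalSumEhr}. There is a cleaner packaging: identify $P_1 \diamond P_2$ directly with $P_1 \oplus P_2'$ (the glued vertex $w$ plays the role of the old $u$, which is already maximal in $P_1$ and now dominates nothing extra except via being below $P_2'$), giving $\mathrm{Ehr}(\mathcal{O}(P_1 \diamond P_2),x) = \mathrm{Ehr}(\mathcal{O}(P_1),x)\cdot\mathrm{Ehr}(\mathcal{O}(P_2'),x)\cdot(1-x)$, then substitute $\mathrm{Ehr}(\mathcal{O}(P_2'),x) = (1-x)\,\mathrm{Ehr}(\mathcal{O}(P_2),x)$ to finish. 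I would present the latter, shorter version. The dimension count is a useful sanity check: $\#(P_1 \diamond P_2) = p_1 + p_2 - 1$, and on the right-hand side the denominator degree is $(p_1+1) + (p_2+1) - 2 = p_1 + p_2$, i.e. $\#(P_1\diamond P_2) + 1$, as it must be.
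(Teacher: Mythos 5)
Your proposal is correct and follows essentially the same route as the paper: the paper also writes $P_1\diamond P_2$ as an ordinal sum involving $P_1\setminus\{u\}$ (you use the mirror-image decomposition $P_1\oplus(P_2\setminus\{v\})$ in your preferred short version, and the equivalent three-block form $P_1'\oplus I_1\oplus P_2'$ in the longer one) and then converts $\mathrm{Ehr}(\mathcal{O}(P_i'),x)=(1-x)\,\mathrm{Ehr}(\mathcal{O}(P_i),x)$ via Theorem \ref{OrdinalSumEhr}. Note only that your intermediate display carries a stray extra factor of $(1-x)^2$ (it should read $\frac{\mathrm{Ehr}(\mathcal{O}(P_1'),x)\cdot\mathrm{Ehr}(\mathcal{O}(P_2'),x)}{(1-x)^2}\cdot(1-x)^2$); your final substitution and conclusion are nonetheless correct.
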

\begin{proof}
Let $P_1\setminus \{u\}$ be the poset obtained from $P_1$ by deleting the unique maximal element $u$ and its covering relationship follows $P_1$. The Ehrhart series of order polytope of the one-element poset is $\frac{1}{(1-x)^2}$. By Theorem \ref{OrdinalSumEhr}, the Ehrhart series of $\mathcal{O}(P_1)$ is
\[
\mathrm{Ehr}(\mathcal{O}(P_1),x)=\mathrm{Ehr}(\mathcal{O}(P_1\setminus \{u\}),x)\cdot\frac{1}{(1-x)^2}\cdot (1-x).
\]
Since $P_1\diamond P_2=(P_1\setminus \{u\})\oplus P_2$, we have
\[
\mathrm{Ehr}(\mathcal{O}(P_1\diamond P_2),x)=\mathrm{Ehr}(P_1\setminus \{u\}),x)\cdot \mathrm{Ehr}(\mathcal{O}(P_2),x)\cdot (1-x).
\]
This completes the proof.
\end{proof}

\subsection{Some Examples}

Now let us consider some known posets in some literature. These posets have been studied by many scholars for their various properties. We mainly study the Ehrhart series for these posets in this subsection.

\begin{exa}\label{ExampJorH6}
We calculate the Ehrhart series of the order polytope of poset $P$ in Figure \ref{OP1}.
\begin{figure}[H]
\centering
\includegraphics[width=0.23\linewidth]{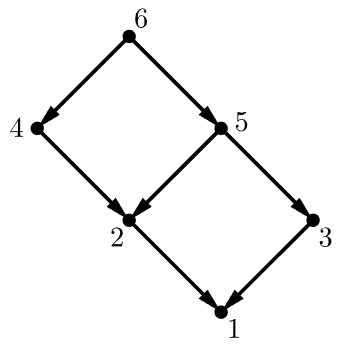}
\caption{The Hasse diagram of poset $P$ in Example \ref{ExampJorH6}.}
\label{OP1}
\end{figure}

The Jordan-H\"older set of this poset is as follows:
\begin{align*}
\pi &= 123456,\ d(\pi)=0; & \pi &=123546,\ d(\pi)=1; & \pi &= 124356,\ d(\pi)=1;
\\ \pi &=132456,\ d(\pi)=1; & \pi &= 132546,\ d(\pi)=2.
\end{align*}
By \eqref{EhrJordanH}, we have
\[ 
\mathrm{Ehr}(\mathcal{O}(P),x)=\frac{1+3x+x^2}{(1-x)^7}.
\] 
This is the generating function of sequence [A006542] on the OEIS \cite{Sloane23}. Hence, we provide a new combinatorial interpretation for this sequence.

Let $P^{\diamond k}$ denote $P\diamond P\diamond \cdots \diamond P$, with $k$ copies of the $P$'s.
Now we consider the $\mathcal{O}(P^{\diamond k})$. By Corollary \ref{GluedP1P2}, we have
\[
\mathrm{Ehr}(\mathcal{O}(P^{\diamond k}),x)=\frac{(1+3x+x^2)^k}{(1-x)^{5k+2}}.
\]
The $\mathrm{Ehr}(\mathcal{O}(P^{\diamond k}),x)$, $k\geq 0$ corresponds to the Riordan array $\left(\frac{1}{(1-x)^2}, \frac{1+3x+x^2}{(1-x)^5}\right)$. The numerators $(1+3x+x^2)^k, k\geq 0$ are the row generating functions of sequence [A272866] on the OEIS.

By the Riordan array $\left(\frac{1}{(1-x)^2}, \frac{1+3x+x^2}{(1-x)^5}\right)$, we can get a combinatorial identity. We have
\[
\mathrm{Ehr}(\mathcal{O}(P^{\diamond i}),x)\cdot \frac{1+3x+x^2}{(1-x)^5}=\mathrm{Ehr}(\mathcal{O}(P^{\diamond (i+1)}),x),
\]
i.e.,
\[
\sum_{n\geq 0}\mathrm{ehr}(\mathcal{O}(P^{\diamond i}),n)x^n\cdot \sum_{n\geq 0}\frac{1}{24}(n+1)(n+2)(5n^2+15n+12)x^n=\sum_{n\geq 0}\mathrm{ehr}(\mathcal{O}(P^{\diamond (i+1)}),n)x^n.
\]
Therefore, we have
\[
\mathrm{ehr}(\mathcal{O}(P^{\diamond (i+1)}),n)=\sum_{j=0}^n\frac{1}{24}(j+1)(j+2)(5j^2+15j+12)\cdot\mathrm{ehr}(\mathcal{O}(P^{\diamond i}),n-j).
\]
\end{exa}

\begin{exa}\label{RadioTower}
D'al\'i and Delucchi \cite{DaliDelucchi} consider the ``radio-tower" poset $X_k$. This is the $k$-fold ordinal sum of the $2$-element antichain. The Ehrhart series of $\mathcal{O}(X_1)$ is
\[
\mathrm{Ehr}(\mathcal{O}(X_1),x)=\sum_{n\geq 0}(n+1)^2x^n=\frac{1+x}{(1-x)^3}.
\]
By Corollary \ref{CorKsumP}, we have
\[
\mathrm{Ehr}(\mathcal{O}(X_k),x)=\frac{(1+x)^k}{(1-x)^{2k+1}}.
\]
The $\mathrm{Ehr}(\mathcal{O}(X_k),x)$, $k\geq 0$ corresponds to the Riordan array $\left(\frac{1}{1-x}, \frac{1+x}{(1-x)^2}\right)$.
\end{exa}

\begin{exa}
Gao, Hou, and Xin \cite{GaoHouXin} studied the $P$-partitions related to ordinal sums of posets. For instance, the poset $H_k$ for hexagonal plane partitions with diagonals, see Figure \ref{OP6}.
\begin{figure}[htp]
\centering
\includegraphics[width=0.65\linewidth]{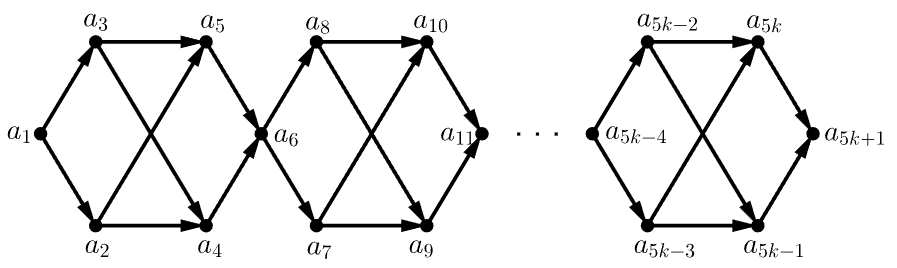}
\caption{The poset $H_k$ for hexagonal plane partitions with diagonals.}
\label{OP6}
\end{figure}

In Example \ref{RadioTower}, the Ehrhart series of $\mathcal{O}(X_2)$ for the $2$-fold ordinal sum of the $2$-element antichain is
\[
\mathrm{Ehr}(\mathcal{O}(X_2),x)=\frac{(1+x)^2}{(1-x)^5}.
\]
By Theorem \ref{OrdinalSumEhr}, we have
\[
\mathrm{Ehr}(\mathcal{O}(I_1\oplus X_2),x)=\frac{(1+x)^2}{(1-x)^6}.
\]
Since $H_k=(I_1\oplus X_2)\oplus(I_1\oplus X_2)\oplus \cdots (I_1\oplus X_2)\oplus I_1$, we obtain
\[
\mathrm{Ehr}(\mathcal{O}(H_k),x)=\frac{(1+x)^{2k}}{(1-x)^{6k}}\cdot (1-x)^{k-1}\cdot \frac{1}{(1-x)^2}\cdot (1-x)=\frac{(1+x)^{2k}}{(1-x)^{5k+2}}.
\]
The $\mathrm{Ehr}(\mathcal{O}(H_k),x)$, $k\geq 0$ corresponds to the Riordan array $\left(\frac{1}{(1-x)^2}, \frac{(1+x)^2}{(1-x)^5}\right)$.
\end{exa}

\begin{exa}
MacMahon \cite{MacMahon1912} considered the Boolean poset of order $3$ described by Figure \ref{OP7}.
\begin{figure}[htp]
\centering
\includegraphics[width=0.3\linewidth]{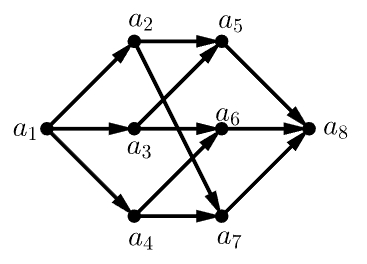}
\caption{The Boolean poset $B_3$.}
\label{OP7}
\end{figure}

\noindent
Through simple calculations, we obtain
\[
\mathrm{Ehr}(\mathcal{O}(B_3),x)=\frac{1+11x+24x^2+11x^3+x^4}{(1-x)^9}.
\]
We glue $B_3$ along their extremal elements to obtain a poset $B_3^{\diamond k}$. By Corollary \ref{GluedP1P2}, we get
\[
\mathrm{Ehr}(\mathcal{O}(B_3^{\diamond k}),x)=\frac{(1+11x+24x^2+11x^3+x^4)^k}{(1-x)^{7k+2}}.
\]
The $\mathrm{Ehr}(\mathcal{O}(B_3^{\diamond k}),x)$, $k\geq 0$ corresponds to the Riordan array $\left(\frac{1}{(1-x)^2}, \frac{1+11x+24x^2+11x^3+x^4}{(1-x)^7}\right)$.
\end{exa}

\begin{exa}
Andrews, Paule, and Riese \cite{Andrews10} studied plane partitions with diagonals whose corresponding poset $PD_k$ (also see \cite{GaoHouXin}) can be depicted in Figure \ref{OP8}. In fact, $PD_k=A\oplus (B \oplus B \oplus \cdots B) \oplus C$, there are $k-1$ $B$'s in the middle. The posets $A,B$ and $C$ given in Figure \ref{OP9}.
\begin{figure}[htp]
\centering
\includegraphics[width=0.9\linewidth]{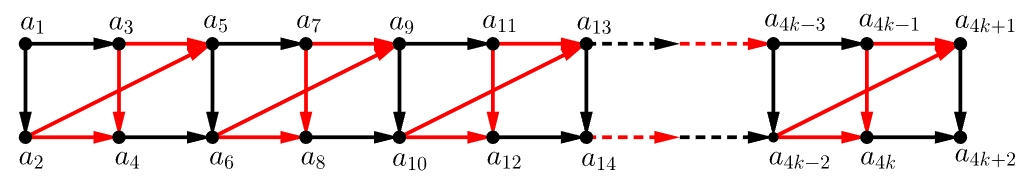}
\caption{The poset $PD_k$ for plane partitions with diagonals.}
\label{OP8}
\end{figure}
\begin{figure}[htp]
\centering
\includegraphics[width=0.8\linewidth]{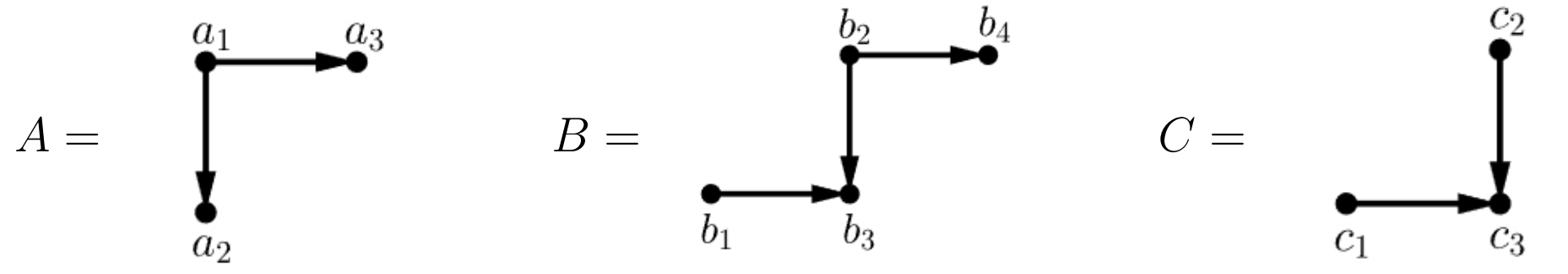}
\caption{Three basic blocks of the poset $PD_k$.}\label{OP9}
\end{figure}

Since the Ehrhart series of $\mathcal{O}(B)$ is
\[
\mathrm{Ehr}(\mathcal{O}(B),x)=\frac{1+3x+x^2}{(1-x)^5}, 
\]
we obtain
\[
\mathrm{Ehr}(\mathcal{O}(PD_k),x)=\frac{(1+x)^2(1+3x+x^2)^{k-1}}{(1-x)^{4k+3}}.
\]
\end{exa}

\section{Direct Product and Ordinal Product}\label{sec-prod}
In this section, we introduce two additional operations that can be performed on posets. Note that these operations are descriptive in nature. That is, the Ehrhart polynomials of the resulting new posets are not determined by the Ehrhart polynomials of the original posets.

\subsection{Direct Product}

If $P_1$ and $P_2$ are two disjoint posets, then the \emph{direct product} of $P_1$ and $P_2$ is the poset $P_1\times P_2$ on $\{(s,t)| s\in P_1, t\in P_2\}$ such that $(s,t)\preceq (s^{\prime},t^{\prime})$ in $P_1\times P_2$ if $s\preceq s^{\prime}$ in $P_1$ and $t\preceq t^{\prime}$ in $P_2$. It is clear from the definition that $P_1\times P_2$ and $P_2\times P_1$ are isomorphic.

\begin{exa}
Let $I_k$ be a $k$-element chain and $V$ be a $3$-element poset in Figure \ref{OP2}. Let $M_k=I_k\times V$.
We consider the $\mathcal{O}(M_k)$. In \cite{Kreweras81}, Kreweras and Niederhausen obtained the number of linear extensions of $M_k$ with exactly $i$ switchbacks (i.e., $i$'s descents), denoted $T(k,i)$, where $0\leq i\leq 2k-1$. For the sequences $T(k,i)$, $0\leq i\leq 2k-1$, see \cite[A140136]{Sloane23}. Therefore, we have
\[
\mathrm{Ehr}(\mathcal{O}(M_k),x)=\frac{\sum_{i=0}^{2k-1}T(k,i)x^i}{(1-x)^{3k+1}}.
\]

From the work of Kreweras and Niederhausen \cite{Kreweras81}, we can find that $\mathrm{Ehr}(\mathcal{O}(M_k),x)$, $k\geq 0$ are the column generating functions of sequence [A111910]. Hence, we have
\[
\mathrm{ehr}(\mathcal{O}(M_k),n)=\frac{(n+k+1)!(2n+2k+1)!}{(n+1)!(2n+1)!(k+1)!(2k+1)!}.
\]
Especially, $\mathrm{Ehr}(\mathcal{O}(M_k),x)$, $k=1,2,3$, correspond to the sequences [A000330], [A006858], and [A006859], respectively. Therefore, we provide some new combinatorial interpretations for the aforementioned sequences.
\end{exa}

\begin{exa}
Let $I_k$ be a $k$-element chain and $\lozenge$ be a $4$-element poset in Figure \ref{OP3}.
\begin{figure}[htp]
\centering
\includegraphics[width=0.2\linewidth]{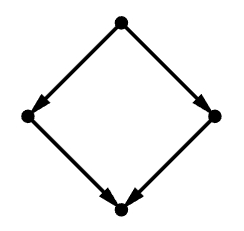}
\caption{The Hasse diagram of $4$-element poset $\lozenge$.}
\label{OP3}
\end{figure}

In fact, $\lozenge=I_2\times I_2$. The $\lozenge$ is called \emph{diamond poset}. The poset $I_k\times \lozenge$ is called multi-cube posets in \cite{DuHou12}. We consider the Ehrhart series of $\mathcal{O}(I_k\times \lozenge)$. We can obtain the following results using the \texttt{CTEuclid} package in \cite{Xin15} (or Stembridge's software package \cite{Stempackage}):
\begin{scriptsize}
\begin{align*}
\mathrm{Ehr}(\mathcal{O}(I_1\times \lozenge),x)&=\frac{1+x}{(1-x)^5},
\\ \mathrm{Ehr}(\mathcal{O}(I_2\times \lozenge),x)&=\frac{1+11x+24x^2+11x^3+x^4}{(1-x)^9},
\\ \mathrm{Ehr}(\mathcal{O}(I_3\times \lozenge),x)&=\frac{1+37x+315x^2+873x^3+873x^4+315x^5+37x^6+x^7}{(1-x)^{13}},
\\ \mathrm{Ehr}(\mathcal{O}(I_4\times \lozenge),x)&=\frac{1+88x+1841x^2+13812x^3+44050x^4+64374x^5+44050x^6+13812x^7+1841x^8+88x^9+x^{10}}{(1-x)^{17}}.
\end{align*}
\end{scriptsize}
The $\mathrm{Ehr}(\mathcal{O}(I_k\times \lozenge),x)$, $k=1,2,3,4$, correspond to the sequences [A002415], [A056932], [A006360], and [A006361], respectively.

By \cite[Proposition 3.5.1]{RP.Stanley} and Equation \eqref{EhrJordanH}, the Ehrhart series $\mathrm{Ehr}(\mathcal{O}(I_k\times \lozenge),x)$ is the generating function of antichains (or order ideals) in the poset $I_2\times I_2\times I_k\times I_n$ or the size of the distributive lattice $J(I_2\times I_2\times I_k\times I_n)$ for each $k$. It is precisely the combinatorial interpretations given on the OEIS.
\end{exa}

\subsection{Ordinal Product}

If $P_1$ and $P_2$ are two disjoint posets, then the \emph{ordinal product} of $P_1$ and $P_2$ is the poset $P_1\otimes P_2$ on $\{(s,t)| s\in P_1, t\in P_2 \}$ such that $(s,t)\preceq (s^{\prime},t^{\prime})$ if (i) $s=s^{\prime}$ and $t\preceq t^{\prime}$, or (ii) $s\prec s^{\prime}$. In general, $P_1\otimes P_2\ncong P_2\otimes P_1$.

It is difficult to obtain the expression for $\mathrm{Ehr}(\mathcal{O}(P_1\otimes P_2),x)$. However, we can obtain some formulas for special cases.

\begin{exa}
Let $I_k$ be a $k$-element chain and $P$ be a finite poset of cardinality $p$. We consider the $\mathcal{O}(I_k\otimes P)$. By the definitions of ordinal sum and ordinal product, it is easy to see that $I_k\otimes P$ is the $k$-fold ordinal sum of $P$. By Theorem \ref{OrdinalSumEhr}, we have
\[
\mathrm{Ehr}(\mathcal{O}(I_k\otimes P),x)=(\mathrm{Ehr}(\mathcal{O}(P),x))^k\cdot (1-x)^{k-1}.
\]
\end{exa}

\begin{prop}
Let $I_k$ be a $k$-element chain and $V$ be a $3$-element poset in Figure \ref{OP2}. Then we have
\[
\mathrm{Ehr}(\mathcal{O}(V\otimes I_k),x)=\frac{\sum_{i=0}^k\binom{k}{i}^2 x^i}{(1-x)^{3k+1}}.
\]
Note that the coefficient of the numerator ($h^{*}$-vector) is the square of the entries of Pascal's triangle, i.e., the Narayana numbers of type B \cite[A008459]{Sloane23}.

\begin{figure}[htp]
\centering
\includegraphics[width=0.2\linewidth]{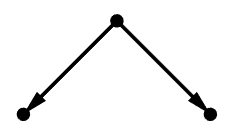}
\caption{The Hasse diagram of $3$-element poset $V$.}
\label{OP2}
\end{figure}
\end{prop}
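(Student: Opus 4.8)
The plan is to recognize $V\otimes I_k$ as an iterated ordinal sum and then invoke the results of Section~\ref{sec-sum}. Orient $V$ so that it has a unique minimal element $m$ covered by two incomparable maximal elements $a,b$; the opposite orientation replaces $V\otimes I_k$ by $(I_k+I_k)\oplus I_k$, which is the dual of the poset below, so by the duality invariance of $\mathrm{Ehr}$ (the first proposition of Section~\ref{sec-sum}) it does not matter which we pick. By the definition of the ordinal product, $V\otimes I_k$ is obtained from $V$ by blowing up each element into a $k$-chain, with every element of a lower fiber below every element of an upper fiber, and fibers over incomparable elements incomparable; hence
\[
V\otimes I_k\;\cong\;I_k\oplus(I_k+I_k),
\]
the ordinal sum of a $k$-chain with the disjoint union of two $k$-chains. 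So the proof has three moves: verify this isomorphism, feed it through Theorem~\ref{OrdinalSumEhr} and Proposition~\ref{PropDirecSum}, and then dispatch a binomial identity.

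For the middle move: by Example~\ref{ExampIK}, $\mathrm{Ehr}(\mathcal{O}(I_k),x)=(1-x)^{-(k+1)}$, so $\mathrm{ehr}(\mathcal{O}(I_k),n)=\binom{n+k}{k}$. Proposition~\ref{PropDirecSum} gives $\mathrm{Ehr}(\mathcal{O}(I_k+I_k),x)=\sum_{n\ge 0}\binom{n+k}{k}^2x^n$, and Theorem~\ref{OrdinalSumEhr} then yields
\[
\mathrm{Ehr}(\mathcal{O}(V\otimes I_k),x)=\mathrm{Ehr}(\mathcal{O}(I_k),x)\cdot\mathrm{Ehr}(\mathcal{O}(I_k+I_k),x)\cdot(1-x)=\frac{1}{(1-x)^k}\sum_{n\ge 0}\binom{n+k}{k}^2x^n .
\]
Comparing with the asserted expression, the proposition is therefore equivalent to the generating-function identity
\[
\sum_{n\ge 0}\binom{n+k}{k}^2x^n=\frac{\sum_{i=0}^{k}\binom{k}{i}^2x^i}{(1-x)^{2k+1}},
\]
or, extracting the coefficient of $x^n$, to the finite identity $\displaystyle\binom{n+k}{k}^2=\sum_{i=0}^{k}\binom{k}{i}^2\binom{n+2k-i}{2k}$.

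I expect this last identity to be the only non-formal step, and I would note that it is classical: it is exactly the statement that the $h^{*}$-polynomial of the product of two standard $k$-simplices $\mathcal{O}(I_k)\times\mathcal{O}(I_k)$ is $\sum_i\binom{k}{i}^2x^i$, and it can be proved by Vandermonde's convolution (or by creative telescoping). An alternative that avoids it entirely is to compute $\mathrm{Ehr}(\mathcal{O}(V\otimes I_k),x)$ a second time straight from \eqref{EhrJordanH}: the linear extensions of $V\otimes I_k$ are precisely the words that list the bottom $k$-chain and then shuffle the two top $k$-chains (there are $\binom{2k}{k}$ of them), and under the natural labeling that numbers the bottom chain $1,\dots,k$, one top chain $k+1,\dots,2k$, and the other $2k+1,\dots,3k$, one checks that a descent occurs exactly where an element of the last chain is immediately followed by an element of the first; the number of such shuffles with exactly $i$ descents is $\binom{k}{i}^2$, by the standard bijection recording which $i$ letters of each chain bound a descent. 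Either route produces the numerator $\sum_{i=0}^{k}\binom{k}{i}^2x^i$, and since $\#(V\otimes I_k)=3k$ the denominator is $(1-x)^{3k+1}$, which also exhibits the $h^{*}$-vector as the type-$B$ Narayana numbers $\binom{k}{i}^2$ claimed in the statement.
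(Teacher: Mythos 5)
Your main argument is essentially the paper's own proof: the same decomposition $V\otimes I_k\cong(I_k+I_k)\oplus I_k$ (you handle the orientation of $V$ via duality, which is harmless), the same application of Theorem \ref{OrdinalSumEhr} together with the Hadamard-product formula for direct sums, and the same reduction to the binomial identity $\binom{n+k}{k}^2=\sum_{i=0}^{k}\binom{k}{i}^2\binom{n+2k-i}{2k}$, which the paper likewise treats as the only nontrivial step and verifies by computer (it is exactly the content of Proposition \ref{PropEhrCase2}). Your closing alternative---computing the numerator directly from \eqref{EhrJordanH} by noting that the linear extensions are the $\binom{2k}{k}$ shuffles of the two upper chains and that those with $i$ descents number $\binom{k}{i}^2$---is a correct, self-contained variant that avoids the identity entirely and coincides with the lattice-path argument the paper gives in the corollary following Proposition \ref{PropEhrCase2}.
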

\begin{proof}
We only need to prove the Ehrhart polynomial
\[
\mathrm{ehr}(\mathcal{O}(V\otimes I_k),n)=\sum_{i=0}^k\binom{k}{i}^2\binom{3k+n-i}{n-i}.
\]
It is obvious that the Ehrhart series of $\mathcal{O}(I_k)$ is
\[
\mathrm{Ehr}(\mathcal{O}(I_k),x) = \frac{1}{(1-x)^{k+1}}.
\]
Observe that $V\otimes I_k=(I_k+I_k)\oplus I_k$.
By Equation \eqref{DirecSumHP} and Theorem \ref{OrdinalSumEhr},
we have
\[
\mathrm{Ehr}(\mathcal{O}(V\otimes I_k),x)=\sum_{i\geq 0}\binom{k+i}{i}^2x^i\cdot \frac{1}{(1-x)^{k+1}}\cdot (1-x).
\]
Therefore, we have
\[
\mathrm{ehr}(\mathcal{O}(V\otimes I_k),n) = \sum_{i=0}^n \binom{k+i}{i}^2 \binom{k+n-i-1}{n-i-1} = \sum_{i=0}^k\binom{k}{i}^2\binom{3k+n-i}{n-i}.
\]
This completes the proof.
\end{proof}

\subsection{An Example}
In this subsection, we give an example to show that $\mathrm{Ehr}(\mathcal{O}(P\times Q),x)$ and $\mathrm{Ehr}(\mathcal{O}(P\otimes Q),x)$ are not determined by $\mathrm{Ehr}(\mathcal{O}(P),x)$ and $\mathrm{Ehr}(\mathcal{O}(Q),x)$. Therefore, to compute these Ehrhart series is usually hard, because we have to go into the structure of new posets.

\begin{figure}[htp]
\centering
\includegraphics[width=0.7\linewidth]{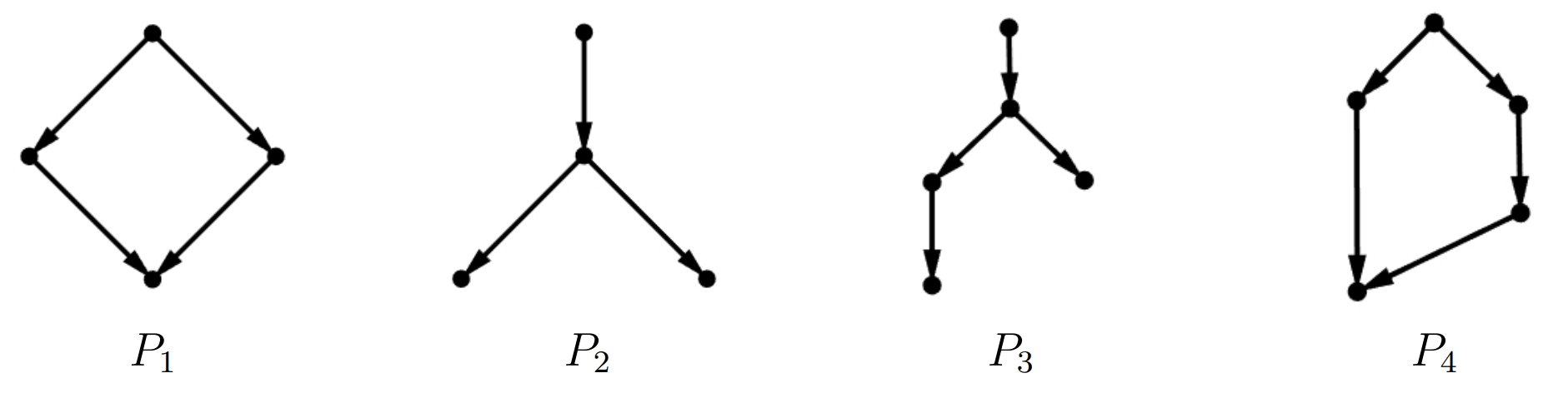}
\caption{The Hasse diagrams of $P_1, P_2, P_3$, and $P_4$.}\label{OP14}
\end{figure}

In Figure \ref{OP14}, we can easily obtain the Ehrhart series for $\mathcal{O}(P_1), \mathcal{O}(P_2), \mathcal{O}(P_3)$, and $\mathcal{O}(P_4)$ as follows:
\begin{align*}
\mathrm{Ehr}(\mathcal{O}(P_1),x)&=\mathrm{Ehr}(\mathcal{O}(P_2),x)=\frac{1+x}{(1-x)^5}
\\ \mathrm{Ehr}(\mathcal{O}(P_3),x)&=\mathrm{Ehr}(\mathcal{O}(P_4),x)=\frac{1+2x}{(1-x)^6}.
\end{align*}
We obtained the following results using Stembridge's software package \cite{Stempackage}:
\begin{tiny}
\begin{align*}
\mathrm{Ehr}(\mathcal{O}(P_1\times P_3),x)&= \frac{6 x^{14}+1511 x^{13}+82568 x^{12}+1588338 x^{11}+13452273 x^{10}+56518447 x^{9}+125835274 x^{8}+153505841 x^{7}}{\left(1-x \right)^{21}}
\\&\quad +\frac{103554312 x^{6}+38139327 x^{5}+7383903 x^{4}+698609 x^{3}+28315 x^{2}+383 x +1}{\left(1-x \right)^{21}},
\\ \mathrm{Ehr}(\mathcal{O}(P_2 \times P_4),x)&=\frac{6 x^{14}+1584 x^{13}+85102 x^{12}+1599870 x^{11}+13290736 x^{10}+55102635 x^{9}+121874741 x^{8}+148652880 x^{7}}{\left(1-x \right)^{21}}
\\&\quad +\frac{100816946 x^{6}+37469554 x^{5}+7329760 x^{4}+699374 x^{3}+28412 x^{2}+381 x +1}{\left(1-x \right)^{21}},
\\ \mathrm{Ehr}(\mathcal{O}(P_3\otimes P_3),x) &= \frac{32x^9+1048x^8+6844x^7+16574x^6+19395x^5+12249x^4+4264x^3+770x^2+59x+1}{(1-x)^{26}},
\\ \mathrm{Ehr}(\mathcal{O}(P_4\otimes P_4),x) &= \frac{4928x^9+47040x^8+152880x^7+232104x^6+186864x^5+83298x^4+20124x^3+2382x^2+108x+1}{(1-x)^{26}}.
\end{align*}
\end{tiny}

It is easy to see that $\mathrm{Ehr}(\mathcal{O}(P_1\times P_3),x)\neq \mathrm{Ehr}(\mathcal{O}(P_2 \times P_4),x)$ and $\mathrm{Ehr}(\mathcal{O}(P_3\otimes P_3),x)\neq \mathrm{Ehr}(\mathcal{O}(P_4\otimes P_4),x)$. This indicates that $\mathrm{Ehr}(\mathcal{O}(P\times Q),x)$ and $\mathrm{Ehr}(\mathcal{O}(P\otimes Q),x)$ may not have concise expressions.

\section{The Poset $I_1\oplus (I_{p_1}+I_{p_2}+\cdots +I_{p_r})\oplus I_1$ and Its Variant}\label{sec-Ipr}

\subsection{The Ehrhart Series}

Let $p_i \in \mathbb{P}$, $1\leq i\leq r$. Let $I_k$ be the $k$-element chain. Consider the order polytope of the poset $I_1\oplus (I_{p_1}+I_{p_2}+\cdots +I_{p_r})\oplus I_1$, which is depicted in Figure \ref{OP4}.
\begin{figure}[htp]
\centering
\includegraphics[width=0.55\linewidth]{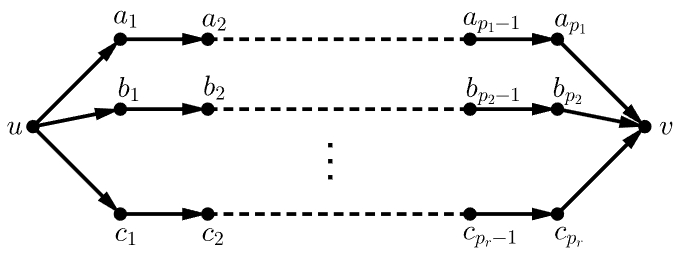}
\caption{The poset $I_1\oplus (I_{p_1}+I_{p_2}+\cdots +I_{p_r})\oplus I_1$.}
\label{OP4}
\end{figure}

Let $M=\{1^{p_1}, 2^{p_2}, \ldots, r^{p_r}\}$ be a multiset, $N=p_1+p_2+\cdots +p_r$, and $\mathfrak{S}_M$ be the set of multipermutations on $M$. Let $T(M, r, i)$ be the number of multipermutations $\pi$ in $\mathfrak{S}_M$ with $\mathrm{des}(\pi)=i$, i.e., $\pi$ has $i$ descents, the positions $j<N$ such that $\pi_j>\pi_{j+1}$.
For example, if $p_1=p_2=2$, $r=2$, then $T(M,2,1)=4$ since all the multipermutations of $1,1,2,2$ with $1$ descent are $1212$, $1221$, $2112$, $2211$.

Define the generating function
\[
A_{M,r}(x)=\sum_{\pi\in \mathfrak{S}_M}x^{\mathrm{des}(\pi)}=\sum_{i\geq 0}T(M, r, i)x^i.
\]

\begin{thm}\label{rkrkline}
Let $p_i \in \mathbb{P}$, $1\leq i\leq r$, and $N=p_1+p_2+\cdots +p_r$. Let $I_k$ be a $k$-element chain. The Ehrhart series of $\mathcal{O}(I_1\oplus (I_{p_1}+I_{p_2}+\cdots +I_{p_r})\oplus I_1)$ is
\[
\mathrm{Ehr}(\mathcal{O}(I_1\oplus (I_{p_1}+I_{p_2}+\cdots +I_{p_r})\oplus I_1),x)
=\frac{A_{M, r}(x)}{(1-x)^{N+3}},
\]
where $A_{M,r}(x)$ is as defined above.
\end{thm}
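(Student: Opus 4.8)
The plan is to reduce the statement to a computation of the Jordan–Hölder set of the poset $Q := I_1\oplus (I_{p_1}+I_{p_2}+\cdots +I_{p_r})\oplus I_1$, using the identity \eqref{EhrJordanH} together with Theorem~\ref{OrdinalSumEhr}. Since $\#Q = N+2$, formula \eqref{EhrJordanH} gives
\[
\mathrm{Ehr}(\mathcal{O}(Q),x)=\frac{\sum_{\pi\in\mathcal{L}(Q)}x^{d(\pi)}}{(1-x)^{N+3}},
\]
so it suffices to show $\sum_{\pi\in\mathcal{L}(Q)}x^{d(\pi)} = A_{M,r}(x)$.

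First I would handle the outer $I_1$'s. By Theorem~\ref{OrdinalSumEhr} applied twice (with $P_1$ a one-element chain, or $P_2$ a one-element chain, each contributing $\mathrm{Ehr}(\mathcal{O}(I_1),x)=\frac{1}{(1-x)^2}$ and a factor $(1-x)$), we get
\[
\mathrm{Ehr}(\mathcal{O}(Q),x)=\mathrm{Ehr}(\mathcal{O}(I_{p_1}+\cdots+I_{p_r}),x)\cdot\frac{1}{(1-x)^2}\cdot(1-x)\cdot\frac{1}{(1-x)^2}\cdot(1-x)=\frac{\mathrm{Ehr}(\mathcal{O}(I_{p_1}+\cdots+I_{p_r}),x)}{(1-x)^2}.
\]
Writing $R := I_{p_1}+\cdots+I_{p_r}$ (a poset of cardinality $N$), with $\mathrm{Ehr}(\mathcal{O}(R),x)=\frac{B(x)}{(1-x)^{N+1}}$ where $B(x)=\sum_{\pi\in\mathcal{L}(R)}x^{d(\pi)}$, the display above becomes $\frac{B(x)}{(1-x)^{N+3}}$. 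Hence the theorem is equivalent to the identity $B(x)=A_{M,r}(x)$, i.e. the descent enumerator of $\mathcal{L}(R)$ equals the descent enumerator of the multiset $M=\{1^{p_1},\dots,r^{p_r}\}$.

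The heart of the argument is therefore a descent-preserving bijection between $\mathcal{L}(I_{p_1}+\cdots+I_{p_r})$ and $\mathfrak{S}_M$. Fix the standard labelling of $R$: block $i$ is the chain with labels $p_1+\cdots+p_{i-1}+1 \prec \cdots \prec p_1+\cdots+p_i$. A linear extension $\pi$ of $R$ is a permutation of $[N]$ in which, for each $i$, the labels of block $i$ appear in increasing order. Map $\pi$ to the word $w\in\mathfrak{S}_M$ obtained by replacing each label belonging to block $i$ with the letter $i$. Since within each block the labels are forced to be increasing, this map is a bijection onto $\mathfrak{S}_M$ (the inverse fills back the $p_i$ positions labelled $i$ with the block-$i$ labels in increasing order). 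It remains to check $d(\pi)=\mathrm{des}(w)$ at each consecutive pair of positions $j,j{+}1$: if $\pi_j,\pi_{j+1}$ lie in different blocks $a\ne b$, then $\pi_j>\pi_{j+1}\iff a>b\iff w_j>w_{j+1}$; if they lie in the same block, then $\pi_j<\pi_{j+1}$ (forced) and $w_j=w_{j+1}$, so neither position is a descent. Thus $d(\pi)=\mathrm{des}(w)$, and summing $x^{d(\pi)}$ over $\mathcal{L}(R)$ gives $A_{M,r}(x)$.

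The only genuinely delicate point is the descent-count verification at same-block boundaries — one must be careful that the standard block labelling really does force increasing order within a block (so that same-block adjacencies never create a descent), which is exactly the defining property of a linear extension of a chain, and that no descent of $w$ is lost. Everything else is bookkeeping: peeling off the two $I_1$'s via Theorem~\ref{OrdinalSumEhr}, and reading off $B(x)$ from \eqref{EhrJordanH}. I would present the bijection explicitly and then collect the factors to conclude.
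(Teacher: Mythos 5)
Your proposal is correct and follows essentially the same route as the paper: the paper's proof simply asserts that the descent generating function of $\mathcal{L}(I_1\oplus (I_{p_1}+\cdots+I_{p_r})\oplus I_1)$ equals $A_{M,r}(x)$ and then applies \eqref{EhrJordanH}. You supply the details the paper leaves as ``simple to see'' --- peeling off the two $I_1$'s via Theorem~\ref{OrdinalSumEhr} and exhibiting the descent-preserving bijection between linear extensions of the disjoint union of chains and multipermutations of $M$ --- all of which checks out.
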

\begin{proof}
It is simple to see that
\[
\sum_{\pi \in \mathcal{L}(I_1\oplus (I_{p_1}+\cdots +I_{p_r})\oplus I_1)}x^{d(\pi)}=\sum_{\pi\in \mathfrak{S}_M}x^{\mathrm{des}(\pi)}.
\]
The proof is completed by applying \eqref{EhrJordanH}.
\end{proof}

\begin{rem}
By Example \ref{ExampIK}, the Ehrhart series of $\mathcal{O}(I_{p_i})$ is
\[
\mathrm{Ehr}(\mathcal{O}(I_{p_i}),x)=\sum_{n\geq 0}\binom{p_i+n}{n}x^n.
\]
By Theorem \ref{OrdinalSumEhr} and Proposition \ref{PropDirecSum}, the Ehrhart series of $\mathcal{O}(I_1\oplus (I_{p_1}+I_{p_2}+\cdots +I_{p_r})\oplus I_1)$ is
\[
\mathrm{Ehr}(\mathcal{O}(I_1\oplus (I_{p_1}+I_{p_2}+\cdots +I_{p_r})\oplus I_1),x)
=\frac{1}{(1-x)^2}\sum_{n\geq 0}\prod_{l=1}^r\binom{p_l+n}{n}x^n.
\]
In fact, MacMahon \cite{MacMahon19} proved that
\begin{align*}
\frac{A_{M, r}(x)}{(1-x)^{N+1}}=\sum_{n\geq 0}\prod_{l=1}^{r}\binom{p_l+n}{n}x^n,
\end{align*}
where $N$ and $A_{M,r}(x)$ are as defined above. In particular, we have
\[
T(M, r, i)=\sum_{j=0}^i(-1)^j\binom{N+1}{j}\prod_{l=1}^r\binom{p_l+i-j}{p_l}.
\]
\end{rem}

When $p_1=p_2=\cdots =p_r=1$, $T(M, r, i)$ is the Eulerian numbers \cite[Page 38--41]{RP.Stanley}. For $p_1=p_2=\cdots =p_r=1, 2, 3, 4, 5, 6$, $T(M, r, i)$ corresponds to sequences [A008292], [A154283], [A174266], [A236463], [A237202], [A237252] on the OEIS, respectively.

Now, let $p_1=p_2=\cdots =p_r=k\geq 1$. We consider the coefficient sequences of $\mathrm{Ehr}(\mathcal{O}(I_1\oplus (I_{p_1}+I_{p_2}+\cdots +I_{p_r})\oplus I_1),x)$, compared with the sequences on the OEIS, we found some interesting results in the following Table \ref{tabbI1IRI1}. So, we give a new combinatorial interpretation of these sequences about the Ehrhart series.

When $p_1=p_2=\cdots =p_r=1$, suppose $x\cdot \mathrm{Ehr}(\mathcal{O}(I_1\oplus (I_{p_1}+I_{p_2}+\cdots +I_{p_r})\oplus I_1),x)=\sum_{n\geq 0}a(n)x^n$. It is clear that $a(n)=\sum_{i=0}^n(n-i+1)i^r$. Furthermore, $a(n)$ satisfies $a(n)=2a(n-1)-a(n-2)+n^r$, $n\geq 2$. This is a recurrence relation for the second partial sums of $r$-th powers \cite[A101093]{Sloane23}. This is also the meaning of the first row sequence in Table \ref{tabbI1IRI1} on the OEIS.

Similarly, we consider a slight variant. The Ehrhart series of $\mathcal{O}(I_1\oplus (I_{p_1}+I_{p_2}+\cdots +I_{p_r}))$ is given by
\[
\mathrm{Ehr}(\mathcal{O}(I_1\oplus (I_{p_1}+I_{p_2}+\cdots +I_{p_r})),x)
=\frac{A_{M, r}(x)}{(1-x)^{N+2}}.
\]
Let $p_1=p_2=\cdots =p_r=k\geq 1$. This is consistent with some sequences on the OEIS (see Table \ref{tabbkrline}). Further, when $r=1$, it is easy to obtain $\mathrm{ehr}(\mathcal{O}(I_1\oplus I_{k}),n)=\sum_{i=0}^n\binom{k+i}{k}=\binom{n+k+1}{k+1}$. When $r=2$, we get $\mathrm{ehr}(\mathcal{O}(I_1\oplus (I_{k}+I_{k})),n)=\sum_{i=0}^n\binom{k+i}{k}^2$. For the power sums of binomial coefficients, see \cite{Dzhumafil16}.

\begin{tiny}
\begin{table}[htbp]
    	\centering
    	\caption{Some sequences of $\mathrm{Ehr}(\mathcal{O}(I_1\oplus (I_{p_1}+I_{p_2}+\cdots +I_{p_r})\oplus I_1),x)$ with $p_i=k$.}
    	\begin{tabular}{c||c|c|c|c|c|c|c|c|c|c}
    		\hline \hline
$k,r$ & 1 & 2 & 3 & 4 & 5 & 6 & 7 & 8 & 9 &10  \\
    		\hline
$1$ & A000292 & A002415 & A024166 & A101089 & A101092 & A101093 & A250212 & A253636 & A253637 & A253710 \\
    		\hline
$2$ & A000332 & A101094 & --- & --- & --- & --- & --- & --- & --- & --- \\
    		\hline
$3$ & A000389 & --- & --- & --- & --- & --- & --- & --- & --- & --- \\
    		\hline
$4$ & A000579 & --- & --- & --- & --- & --- & --- & --- & --- & --- \\
    		\hline
$5$ & A000580 & --- & --- & --- & --- & --- & --- & --- & --- & --- \\
    		\hline
$6$ & A000581 & --- & --- & --- & --- & --- & --- & --- & --- & --- \\
    		\hline
$7$ & A000582 & --- & --- & --- & --- & --- & --- & --- & --- & --- \\
    		\hline
$8$ & A001287 & --- & --- & --- & --- & --- & --- & --- & --- & --- \\
    		\hline
    	\end{tabular}\label{tabbI1IRI1}
\end{table}
\end{tiny}

\begin{tiny}
\begin{table}[htbp]
    	\centering
    	\caption{Some sequences of $\mathrm{Ehr}(\mathcal{O}(I_1\oplus (I_{p_1}+I_{p_2}+\cdots +I_{p_r})),x)$ with $p_i=k$.}
    	\begin{tabular}{c||c|c|c|c|c|c|c|c|c|c}
    		\hline \hline
$k,r$ & 1 & 2 & 3 & 4 & 5 & 6 & 7 & 8 & 9 &10  \\
    		\hline
$1$ & A000217 & A000330 & A000537 & A000538 & A000539 & A000540 & A000541 & A000542 & A007487 & A023002 \\
    		\hline
$2$ & A000292 & A024166 & A085438 & A085439 & A085440 & A085441 & A085442 & --- & --- & --- \\
    		\hline
$3$ & A000332 & A086020 & A086021 & A086022 & --- & --- & --- & --- & --- & --- \\
    		\hline
$4$ & A000389 & A086023 & A086024 & --- & --- & --- & --- & --- & --- & --- \\
    		\hline
$5$ & A000579 & A086025 & A086026 & --- & --- & --- & --- & --- & --- & --- \\
    		\hline
$6$ & A000580 & A086027 & A086028 & --- & --- & --- & --- & --- & --- & --- \\
    		\hline
$7$ & A000581 & A086029 & A086030 & --- & --- & --- & --- & --- & --- & --- \\
    		\hline
$8$ & A000582 & A234253 & --- & --- & --- & --- & --- & --- & --- & --- \\
    		\hline
$9$ & A001287 & --- & --- & --- & --- & --- & --- & --- & --- & --- \\
    		\hline
    	\end{tabular}\label{tabbkrline}
\end{table}
\end{tiny}

\subsection{Some Special Cases}

\begin{prop}\label{PropEhrCase2}
Let $k\in \mathbb{P}$. Then we have
\[
\mathrm{Ehr}(\mathcal{O}(I_1\oplus (I_{k}+I_{k})),x)=\sum_{n\geq 0}\sum_{i=0}^n\binom{k+i}{k}^2x^n=\frac{\sum_{i=0}^k\binom{k}{i}^2x^i}{(1-x)^{2k+2}}.
\]
\end{prop}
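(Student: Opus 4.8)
The plan is to prove the two identities in Proposition \ref{PropEhrCase2} separately and then observe that they are consistent via MacMahon's formula. For the first equality, $\mathrm{Ehr}(\mathcal{O}(I_1\oplus(I_k+I_k)),x)=\sum_{n\geq 0}\sum_{i=0}^n\binom{k+i}{k}^2x^n$, I would start from the structural decomposition $I_1\oplus(I_k+I_k)$ and apply the tools already developed. By Example \ref{ExampIK} we have $\mathrm{Ehr}(\mathcal{O}(I_k),x)=\frac{1}{(1-x)^{k+1}}=\sum_{n\geq 0}\binom{k+n}{n}x^n$. The direct sum $I_k+I_k$ then has Ehrhart series given by the Hadamard product (Proposition \ref{PropDirecSum}), namely $\sum_{n\geq 0}\binom{k+n}{n}^2x^n$. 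Finally, ordinal-summing with $I_1$ on the bottom multiplies by $\mathrm{Ehr}(\mathcal{O}(I_1),x)\cdot(1-x)=\frac{1}{(1-x)^2}\cdot(1-x)=\frac{1}{1-x}$ by Theorem \ref{OrdinalSumEhr} (applied with $P_1=I_1$, $P_2=I_k+I_k$; note $I_1\oplus P = P$-with-a-new-bottom, and the theorem gives the factor $\mathrm{Ehr}(\mathcal{O}(I_1),x)(1-x)=1/(1-x)$). Multiplying a power series by $\frac{1}{1-x}$ takes partial sums, so $\mathrm{ehr}(\mathcal{O}(I_1\oplus(I_k+I_k)),n)=\sum_{i=0}^n\binom{k+i}{k}^2$, which is exactly the claimed coefficient.

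For the second equality, $\sum_{n\geq 0}\sum_{i=0}^n\binom{k+i}{k}^2x^n=\frac{\sum_{i=0}^k\binom{k}{i}^2x^i}{(1-x)^{2k+2}}$, I would invoke the Remark following Theorem \ref{rkrkline}: MacMahon's identity $\frac{A_{M,r}(x)}{(1-x)^{N+1}}=\sum_{n\geq 0}\prod_{l=1}^r\binom{p_l+n}{n}x^n$. Specializing to $r=2$, $p_1=p_2=k$ (so $N=2k$), the right side is $\sum_{n\geq 0}\binom{k+n}{n}^2x^n$, hence $\sum_{n\geq 0}\binom{k+n}{n}^2x^n=\frac{A_{M,2}(x)}{(1-x)^{2k+1}}$. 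To pin down $A_{M,2}(x)$ I would identify it with the descent generating polynomial of multipermutations of $\{1^k,2^k\}$; a clean way is to use the complementary form of MacMahon's $q$-analogue, or more directly to cite/recall the classical fact (a Narayana-type evaluation, already alluded to in the earlier Proposition with $h^*$-vector $\binom{k}{i}^2$) that $A_{M,2}(x)=\sum_{i=0}^k\binom{k}{i}^2x^i$. Alternatively, and perhaps most self-containedly, one can simply multiply the first equality's generating function $\frac{1}{1-x}\sum_{n\geq 0}\binom{k+n}{n}^2x^n$ by $\frac{1-x}{1-x}$ and substitute the just-derived $\sum_{n\geq 0}\binom{k+n}{n}^2x^n=\frac{A_{M,2}(x)}{(1-x)^{2k+1}}$, giving $\frac{A_{M,2}(x)}{(1-x)^{2k+2}}$; comparing with the denominator $(1-x)^{2k+2}$ forced by $\dim\mathcal{O}(I_1\oplus(I_k+I_k))=2k+1$ shows the numerator equals $A_{M,2}(x)$, and it remains only to evaluate this polynomial.

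So the real content reduces to the combinatorial identity $A_{M,2}(x)=\sum_{\pi}x^{\mathrm{des}(\pi)}=\sum_{i=0}^k\binom{k}{i}^2x^i$ for multipermutations $\pi$ of $\{1^k,2^k\}$. I would prove this bijectively: a multipermutation of $\{1^k,2^k\}$ is a binary word with $k$ ones and $k$ twos, and its descents are exactly the positions where a $2$ is immediately followed by a $1$, i.e., the "$21$" factors. Counting binary words of content $(k,k)$ by the number of $21$-occurrences is a standard lattice-path statistic (it corresponds, under the path reading, to counting NE-paths by a "valley/peak"-type statistic), and the count of such words with exactly $i$ descents is $\binom{k}{i}\binom{k}{i}$ — one $\binom{k}{i}$ choosing where the maximal blocks of $1$'s start among the $2$-slots and the other choosing the block sizes, or equivalently by the transfer-matrix / generating-function computation $\sum_i\binom{k}{i}^2x^i=\,$the coefficient extraction from $(1-x)^{-2k-1}$ against $\binom{k+n}{n}^2$. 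I expect this step — giving a crisp bijective or generating-function justification that the descent polynomial of $\{1^k,2^k\}$-multipermutations is $\sum_i\binom{k}{i}^2x^i$ — to be the only non-formal part; everything else is bookkeeping with Theorem \ref{OrdinalSumEhr}, Proposition \ref{PropDirecSum}, and MacMahon's identity from the Remark.
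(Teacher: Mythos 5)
Your proposal is correct, and for the step that carries the real content it takes a different route from the paper. The paper dispatches the first equality in one line as ``directly obtained'' (your structural derivation via Example \ref{ExampIK}, the Hadamard product of Proposition \ref{PropDirecSum}, and the $\frac{1}{1-x}$ partial-sum factor from Theorem \ref{OrdinalSumEhr} is exactly what is meant), and then proves the second equality by clearing denominators and reducing to the single binomial identity $\binom{k+n}{k}^2=\sum_{i=0}^k\binom{k}{i}^2\binom{2k+n-i}{2k}$, which it verifies by Maple. You instead route through MacMahon's identity from the Remark to identify the numerator as the descent polynomial $A_{M,2}(x)$ of multipermutations of $\{1^k,2^k\}$, and then evaluate $A_{M,2}(x)=\sum_{i=0}^k\binom{k}{i}^2x^i$ bijectively via lattice paths from $(0,0)$ to $(k,k)$ counted by peaks (choose $i$ of the $k$ admissible horizontal coordinates and $i$ of the $k$ admissible vertical coordinates). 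This is precisely the argument the paper gives for the Corollary that it \emph{deduces from} the Proposition; you have reversed the logical direction, proving the descent count first and deducing the Proposition, which is perfectly legitimate and arguably more illuminating than the Maple verification. Your buys: a fully combinatorial, computer-free proof. The paper's buys: brevity, and it keeps the Corollary as a genuine consequence. One small caveat: your phrasing of the peak count (``choosing where the maximal blocks of $1$'s start among the $2$-slots and the other choosing the block sizes'') is loose; the clean statement is that a path with exactly $i$ peaks is determined by the set of $x$-coordinates and the set of $y$-coordinates of its peaks, each an $i$-subset of a $k$-set, and you should spell out why distinct peaks have strictly increasing coordinates in both directions and why any such pair of subsets arises from a unique path.
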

\begin{proof}
The first ``$=$" is directly obtained. For the second ``$=$", we need to prove
\[
\sum_{n\geq 0}\binom{k+n}{k}^2x^n=\sum_{i=0}^k\binom{k}{i}^ix^i\cdot \sum_{n\geq 0}\binom{2k+n}{2k}x^n,
\]
i.e.,
\[
\binom{k+n}{k}^2=\sum_{i=0}^k\binom{k}{i}^2\binom{2k+n-i}{2k}.
\]
The above equation can be verified by Maple.
\end{proof}

By Proposition \ref{PropEhrCase2}, we obtain the following corollary.
\begin{cor}
Let $k\in \mathbb{P}$ and $M=\{ 1^k,2^k\}$. The number of multipermutations $\pi$ in $\mathfrak{S}_M$ with $\mathrm{des}(\pi)=i$ is $\binom{k}{i}^2$, i.e., the square of the entries of Pascal's triangle \cite[A008459]{Sloane23}.
\end{cor}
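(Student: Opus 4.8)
The corollary follows directly from Proposition~\ref{PropEhrCase2} together with Theorem~\ref{rkrkline}, so the plan is essentially to match up the two descriptions of the same generating function. First I would specialize Theorem~\ref{rkrkline} to the case $r=2$ and $p_1=p_2=k$, so that $M=\{1^k,2^k\}$ and $N=2k$; this yields
\[
\mathrm{Ehr}(\mathcal{O}(I_1\oplus (I_k+I_k)\oplus I_1),x)=\frac{A_{M,2}(x)}{(1-x)^{2k+3}},
\qquad A_{M,2}(x)=\sum_{i\geq 0}T(M,2,i)\,x^i .
\]
On the other hand, the poset $I_1\oplus(I_k+I_k)\oplus I_1$ is the ordinal sum of $I_1\oplus(I_k+I_k)$ with the one-element chain $I_1$, whose order polytope has Ehrhart series $\tfrac{1}{(1-x)^2}$. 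Applying Theorem~\ref{OrdinalSumEhr} gives
\[
\mathrm{Ehr}(\mathcal{O}(I_1\oplus (I_k+I_k)\oplus I_1),x)
=\mathrm{Ehr}(\mathcal{O}(I_1\oplus (I_k+I_k)),x)\cdot\frac{1}{(1-x)^2}\cdot(1-x).
\]

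Now I would substitute the closed form from Proposition~\ref{PropEhrCase2}, namely $\mathrm{Ehr}(\mathcal{O}(I_1\oplus(I_k+I_k)),x)=\dfrac{\sum_{i=0}^k\binom{k}{i}^2x^i}{(1-x)^{2k+2}}$, into the right-hand side, obtaining
\[
\mathrm{Ehr}(\mathcal{O}(I_1\oplus (I_k+I_k)\oplus I_1),x)=\frac{\sum_{i=0}^k\binom{k}{i}^2x^i}{(1-x)^{2k+3}} .
\]
Comparing this with the expression coming from Theorem~\ref{rkrkline} and cancelling the common denominator $(1-x)^{2k+3}$ forces the polynomial identity $A_{M,2}(x)=\sum_{i=0}^k\binom{k}{i}^2x^i$, and reading off the coefficient of $x^i$ gives $T(M,2,i)=\binom{k}{i}^2$, which is precisely the claim. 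The identification with the sequence [A008459] is then just the observation that $\binom{k}{i}^2$ is the square of a Pascal-triangle entry.

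There is really no serious obstacle here: the only things that need care are (i) checking that the bijection between linear extensions of $I_1\oplus(I_k+I_k)\oplus I_1$ and multipermutations of $M$ used in Theorem~\ref{rkrkline} indeed sends descents of the permutation to descents of the multipermutation (this is already handled in the proof of that theorem), and (ii) making sure the degree bookkeeping is consistent — the polytope has dimension $2k+2$, so the denominator should be $(1-x)^{2k+3}$, which matches both derivations. Since Proposition~\ref{PropEhrCase2} is invoked as a black box, the argument is a short two-line comparison of generating functions; one could alternatively bypass Theorem~\ref{rkrkline} and argue directly that the multipermutations of $\{1^k,2^k\}$ with $i$ descents biject with pairs of $i$-subsets of $[k]$, but routing through the already-established generating-function identity is cleaner.
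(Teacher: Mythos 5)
Your proof is correct, but it follows a genuinely different route from the one the paper writes down. You obtain $A_{M,2}(x)=\sum_{i=0}^k\binom{k}{i}^2x^i$ by computing the Ehrhart series of $\mathcal{O}(I_1\oplus(I_k+I_k)\oplus I_1)$ in two ways --- once via Theorem \ref{rkrkline}, which puts $A_{M,2}(x)$ in the numerator over $(1-x)^{2k+3}$, and once by applying Theorem \ref{OrdinalSumEhr} to Proposition \ref{PropEhrCase2} --- and then cancelling the common denominator. This is logically sound: Proposition \ref{PropEhrCase2} is established independently (its key step is a Maple-verified binomial identity), so there is no circularity, and your dimension bookkeeping ($2k+2$ elements, hence exponent $2k+3$) is right; indeed this is essentially the derivation the paper alludes to in the sentence introducing the corollary. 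The proof the paper actually displays, however, is the direct bijection you mention only as an afterthought: encode each $\pi\in\mathfrak{S}_M$ as a lattice path from $(0,0)$ to $(k,k)$ with $1$ an east step and $2$ a north step, observe that descents of $\pi$ correspond to peaks of the path, and count paths with exactly $i$ peaks by choosing the $i$ horizontal and $i$ vertical coordinates of those peaks, giving $\binom{k}{i}^2$. The bijective argument is self-contained and explains combinatorially why the answer factors as a product of two binomial coefficients; your generating-function comparison is shorter and reuses machinery already in place, but it ultimately leans on the unexplained algebraic identity hidden inside Proposition \ref{PropEhrCase2}, so it trades insight for economy.
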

\begin{proof}
We provide a combinatorial proof. We consider the lattice path from $(0,0)$ to $(k,k)$. We use $1$ stand for the east step and $2$ stand for the north step. Therefore, each $\pi\in \mathfrak{S}_M$ corresponds to a lattice path from $(0,0)$ to $(k,k)$. Each descent (i.e., $21$) corresponds to a peak in the lattice path. We count the number of lattice paths with exactly $i$ peaks. In other words, we need to choose $i$ horizontal and $i$ vertical coordinates. The number is $\binom{k}{i}^2$.
\end{proof}

In fact, when $M=\{ 1^k,2^k,3^k\}$, $k\geq 1$, the number of multipermutations $\pi$ in $\mathfrak{S}_M$ with $\mathrm{des}(\pi)=i$ is the sequence \cite[A181544]{Sloane23}. When $M=\{ 1^k,2^k,3^k,4^k\}$, $k\geq 1$, the number of multipermutations $\pi$ in $\mathfrak{S}_M$ with $\mathrm{des}(\pi)=i$ is the sequence \cite[A262014]{Sloane23}.

Now, let us consider a specific example. If $r=2$ and $k=1$, then we have $\mathrm{Ehr}(\mathcal{O}(I_1\oplus (I_{1}+I_{1})),x)=\frac{1+x}{(1-x)^{4}}$. Let $T_m$ be the $m$-fold ordinal sum of $I_1\oplus (I_{1}+I_{1})$. Then we get $\mathrm{Ehr}(\mathcal{O}(T_m),x)=\frac{(1+x)^m}{(1-x)^{3m+1}}$. We assume $\mathrm{Ehr}(\mathcal{O}(T_0),x)=\frac{1}{1-x}$.

The sequence $[\mathrm{Ehr}(\mathcal{O}(T_0),x), x\cdot\mathrm{Ehr}(\mathcal{O}(T_1),x), x^2\cdot\mathrm{Ehr}(\mathcal{O}(T_1),x),\ldots]$ corresponds to the Riordan array $\left(\frac{1}{1-x}, \frac{x(1+x)}{(1-x)^3}\right)$. We have the following infinite lower triangular matrix:
\[
\overline{M}=\left(\begin{array}{ccccccccc}
1 & \\
1 & 1 \\
1 & 5 & 1 \\
1 & 14 & 9 & 1 \\
1 & 30 & 43 & 13 & 1 \\
1 & 55 & 147 & 88 & 17 & 1 \\
1 & 91 & 406 & 416 & 149 & 21 & 1 \\
1 & 140 & 966 & 1550 & 901 & 226 & 25 & 1 \\
\vdots & \vdots & \vdots & \vdots & \vdots & \vdots & \vdots & \vdots & 1
\end{array}\right).
\]

Furthermore, we have
\[
\frac{1+x}{(1-x)^3}\cdot \mathrm{Ehr}(\mathcal{O}(T_m),x)=\mathrm{Ehr}(\mathcal{O}(T_{m+1}),x),
\]
i.e.,
\[
\sum_{n\geq 0}(n+1)^2x^n\cdot \sum_{n\geq 0}\mathrm{ehr}(\mathcal{O}(T_m),n)x^n=\sum_{n\geq 0}\mathrm{ehr}(\mathcal{O}(T_{m+1}),n)x^n.
\]
Hence, we obtain
\begin{align}\label{FormulehrTm}
\mathrm{ehr}(\mathcal{O}(T_{m+1}),n)=\sum_{i=0}^n\mathrm{ehr}(\mathcal{O}(T_{m}),i)(n-i+1)^2,\ \ \ \ \mathrm{ehr}(\mathcal{O}(T_{0}),n)=1,\ n\geq 0.
\end{align}

The Ehrhart polynomials $\mathrm{ehr}(\mathcal{O}(T_{1}),n)$ and $\mathrm{ehr}(\mathcal{O}(T_{2}),n)$ correspond to [A000330] and [A259181], respectively. The sequence [A259181] is the total number of squares of all sizes in $i\times i$ sub-squares in an $(n+1)\times (n+1)$ grid, $n\geq 0$. The sequence [A000330] simply gives the number of all sizes of squares in an $(n+1)\times (n+1)$ grid, $n\geq 0$. Now, we can further promote it.

\begin{prop}\label{sl2set00}
Let $n\in  \mathbb{N}$ and $m\geq 1$. The Ehrhart polynomial $\mathrm{ehr}(\mathcal{O}(T_{m}),n)$ counts the number of squares of all sizes in $i\times i$ sub-squares of $m-1$ consecutive decomposition in an $(n+1)\times (n+1)$ grid.
\end{prop}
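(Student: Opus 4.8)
The plan is to verify that the quantity described in the statement satisfies the recurrence \eqref{FormulehrTm}, since that recurrence together with the value at $m=1$ already recorded above (the sequence [A000330], the number of squares of all sizes in an $(n+1)\times(n+1)$ grid) determines $\mathrm{ehr}(\mathcal{O}(T_m),n)$ uniquely. So the whole argument reduces to: build a combinatorial model, recognize it as the object in the proposition, and show the model obeys \eqref{FormulehrTm}.

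First I would formalize the combinatorial object. For $m\ge 1$ let $g_m(n)$ count the chains $S_1\supseteq S_2\supseteq\cdots\supseteq S_m$ of axis-parallel squares with sides along grid lines, where $S_1$ is a sub-square of the $(n+1)\times(n+1)$ grid and each $S_{j+1}$ is a sub-square of $S_j$ (squares of every size allowed, and $S_{j+1}=S_j$ permitted). The first $m-1$ squares $S_1\supseteq\cdots\supseteq S_{m-1}$ are the ``$m-1$ consecutive decompositions'', $S_{m-1}$ is the resulting $i\times i$ sub-square, and $S_m$ runs over all squares of all sizes inside it; thus $g_m(n)$ is exactly the number appearing in the proposition, and it remains to prove $g_m(n)=\mathrm{ehr}(\mathcal{O}(T_m),n)$. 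For $m=1$ a chain is a single sub-square $S_1$ of the grid, so $g_1(n)$ is the number of squares of all sizes in the $(n+1)\times(n+1)$ grid, which equals $\mathrm{ehr}(\mathcal{O}(T_1),n)$ by the observation already made in the text.

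Next I would derive the recurrence by peeling off the outermost square. Classify the chains counted by $g_{m+1}(n)$ according to $S_1$: if $S_1$ has size $(i+1)\times(i+1)$ with $0\le i\le n$, there are $(n-i+1)^2$ choices for its position inside the $(n+1)\times(n+1)$ grid, and once $S_1$ is fixed it is a copy of the $(i+1)\times(i+1)$ grid, so $S_2\supseteq\cdots\supseteq S_{m+1}$ is an arbitrary chain of $m$ nested sub-squares of it, of which there are $g_m(i)$. This gives
\[
g_{m+1}(n)=\sum_{i=0}^{n}(n-i+1)^2\,g_m(i),
\]
which is precisely the recurrence \eqref{FormulehrTm}. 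Induction on $m$, with the $m=1$ base case above, then yields $g_m(n)=\mathrm{ehr}(\mathcal{O}(T_m),n)$ for all $m\ge 1$, which is the proposition.

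I do not expect a genuine obstacle: the work is bookkeeping. The two points that need care are (i) translating the informal phrase ``$i\times i$ sub-squares of $m-1$ consecutive decomposition'' into the nested-chain model above, and (ii) matching the index shift between a $j\times j$ sub-square of the $(n+1)\times(n+1)$ grid and the parameter $i=j-1\in\{0,\dots,n\}$, so that the peel-off count lands on \eqref{FormulehrTm} exactly rather than up to a shift. As an alternative one could instead argue through the order-preserving-map description $\mathrm{ehr}(\mathcal{O}(T_m),n)=\Omega_{T_m}(n+1)$ and the ordinal-sum structure of $T_m$, but the recurrence route is the shortest and is essentially forced by \eqref{FormulehrTm}.
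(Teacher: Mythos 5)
Your proposal is correct and follows essentially the same route as the paper: both arguments check the base case $m=1$ against the square-counting interpretation of [A000330] and then show the combinatorial quantity satisfies the recurrence \eqref{FormulehrTm} by conditioning on the outermost square, concluding by induction. The only difference is that you make the nested-chain model explicit, which is a helpful clarification of the paper's informal phrasing but not a different argument.
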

For example, when $m=2, n=2$, we have a grid of $3\times 3$.  We decompose it once. There are $9$ $(1\times 1)$ grids, $4$ $(2\times 2)$ grids, and $1$ $(3\times 3)$ grid. A $2\times 2$ grid also contains $5$ squares, and a $3\times 3$ grid also contains $14$ squares. So we have $\mathrm{ehr}(\mathcal{O}(T_{2}),2)=9+4\times 5+1\times 14=43$.

When $m=3, n=2$, we have a grid of $3\times 3$.  We need to decompose it twice. In the first decomposition, a total of $9$ $(1\times 1)$ grids, $4$ $(2\times 2)$ grids, and $1$ $(3\times 3)$ grid are generated. In the second decomposition, $4$ $(2\times 2)$ grids produce $16$ $(1\times 1)$ grids and $4$ $(2\times 2)$ grids; $1$ $(3\times 3)$ grid produces $9$ $(1\times 1)$ grids, $4$ $(2\times 2)$ grids, and $1$ $(3\times 3)$ grid. So we have $\mathrm{ehr}(\mathcal{O}(T_{3}),2)=9+16+4\times 5+9+4\times 5+14=88$.
\begin{proof}
Firstly, let $T(m,n)$ be the number of squares of all sizes in $i\times i$ sub-squares of $m-1$ consecutive decomposition in an $(n+1)\times (n+1)$ grid. We just need to prove that $T(m,n)$ and $\mathrm{ehr}(\mathcal{O}(T_{m}),n)$ meet the same initial conditions and recursive relationship. For $m=1$, we get
\[
T(1,n)=\mathrm{ehr}(\mathcal{O}(T_{1}),n)=1^2+2^2+3^2+\cdots +(n+1)^2.
\]
For $i>1$, $0\leq j\leq n$, there are $(n-j+1)^2$ squares of $(j+1)\times (j+1)$ in the $(n+1)\times (n+1)$ grid. So we have $T(i+1,n)=\sum_{j=0}^n(n-j+1)^2T(i,j)$. By \eqref{FormulehrTm}, this completes the proof.
\end{proof}

Let us consider another variant. Let $\ell\geq 0$. It is clear that the Ehrhart series of $\mathcal{O}(I_{\ell+1}\oplus (I_k+I_k))$ is
\[
\mathrm{Ehr}(\mathcal{O}(I_{\ell+1}\oplus (I_k+I_k)),x)=\frac{\sum_{i=0}^{k}\binom{k}{i}^2x^i}{(1-x)^{\ell+2k+2}}.
\]
Some sequences on the OEIS corresponding to $\mathrm{Ehr}(\mathcal{O}(I_{\ell+1}\oplus (I_k+I_k)),x)$ are shown in Table \ref{lollipop}.

\begin{tiny}
\begin{table}[htbp]
    	\centering
    	\caption{Some specific sequences of $\mathrm{Ehr}(\mathcal{O}(I_{\ell+1}\oplus (I_k+I_k)),x)$.}
    	\begin{tabular}{c||c|c|c|c|c|c|c|c|c|c|c}
    		\hline \hline
$k,\ell$ & 0 & 1 & 2 & 3 & 4 & 5 & 6 & 7 & 8 &9 &10 \\
    		\hline
$0$ & A000027 & A000217 & A000292 & A000332 & A000389 & A000579 & A000580 & A000581 & A000582 & A001287 & A001288 \\
    		\hline
$1$ & A000330 & A002415 & A005585 & A040977 & A050486 & A053347 & A054333 & A054334 & A057788 & A266561 & --- \\
    		\hline
$2$ & A024166 & A101049 & A101097 & A101102 & A254469 & A254869 & --- & --- & --- & --- & --- \\
    		\hline
$3$ & A086020 & --- & --- & --- & --- & --- & --- & --- & --- & --- & --- \\
    		\hline
    	\end{tabular}\label{lollipop}
\end{table}
\end{tiny}

Especially when $k=1$, we have
\begin{align*}
\mathrm{Ehr}(\mathcal{O}(I_{\ell+1}\oplus (I_1+I_1)),x)&=\frac{1+x}{(1-x)^{\ell+4}}=(1+x)\sum_{n\geq 0}\binom{n+\ell+3}{n}x^n
\\&=1+\sum_{n\geq 1}\left(\binom{n+\ell+3}{n}+\binom{n+\ell+2}{n-1}\right)x^n.
\end{align*}
Inspired by the comments of \cite[A050486]{Sloane23} of Janjic, Pinter, and Lang, we obtain the following results (i.e., Propositions \ref{sl2set1}, \ref{sl2set2} and \ref{sl2set3}). A set $S$ is said to be a \emph{$m$-set} if $|S| = m$.

\begin{prop}\label{sl2set1}
Suppose $\ell\geq 0$, $n\geq 1$. If a $2$-set $Y$ and an $(n+\ell+1)$-set $Z$ are disjoint subsets of an $(n+\ell+4)$-set $X$, then $\mathrm{ehr}(\mathcal{O}(I_{\ell+1}\oplus (I_1+I_1)),n)$ is the number of $(\ell+4)$-subsets of $X$ intersecting both $Y$ and $Z$.
\end{prop}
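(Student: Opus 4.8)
The plan is to reduce the claim to the closed form for $\mathrm{ehr}(\mathcal{O}(I_{\ell+1}\oplus (I_1+I_1)),n)$ already displayed in the text, namely $\binom{n+\ell+3}{n}+\binom{n+\ell+2}{n-1}$ for $n\geq 1$, and then check that the described family of subsets has exactly this size by a short double count. First I would record the structural observation that, since $Y$ and $Z$ are disjoint and $|Y|+|Z| = 2 + (n+\ell+1) = (n+\ell+4)-1 = |X|-1$, there is a unique element $w$ with $X = Y \sqcup Z \sqcup \{w\}$; this makes the ambient set completely explicit.

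Next I would argue that for every $(\ell+4)$-subset $S\subseteq X$ the condition $S\cap Z\neq\emptyset$ is automatic: the complement $X\setminus Z = Y\cup\{w\}$ has only $3$ elements, while $|S| = \ell+4\geq 4$, so $S$ cannot be contained in $X\setminus Z$. Hence the quantity to compute is just the number of $(\ell+4)$-subsets of $X$ that meet $Y$.

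Then I would split this count according to $|S\cap Y|\in\{1,2\}$, the value $0$ being excluded. If $|S\cap Y|=1$ there are $2$ choices for the element of $Y$ and $\binom{n+\ell+2}{\ell+3}$ ways to choose the remaining $\ell+3$ elements from $X\setminus Y$ (which has $n+\ell+2$ elements); if $|S\cap Y|=2$ there are $\binom{n+\ell+2}{\ell+2}$ ways to choose the rest. This yields $2\binom{n+\ell+2}{\ell+3}+\binom{n+\ell+2}{\ell+2}$, and applying Pascal's rule in the form $\binom{n+\ell+3}{\ell+3}=\binom{n+\ell+2}{\ell+3}+\binom{n+\ell+2}{\ell+2}$ rewrites this as $\binom{n+\ell+3}{\ell+3}+\binom{n+\ell+2}{\ell+3}=\binom{n+\ell+3}{n}+\binom{n+\ell+2}{n-1}$, the desired Ehrhart value. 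As an alternative bookkeeping route one can use inclusion--exclusion: the answer is $\binom{n+\ell+4}{\ell+4}-\binom{n+\ell+2}{\ell+4}$ (all $(\ell+4)$-subsets, minus those avoiding $Y$; the ``avoid $Z$'' and ``avoid both'' terms vanish because $\binom{3}{\ell+4}=\binom{1}{\ell+4}=0$ for $\ell\geq 0$), and the same Pascal manipulation identifies this with the target.

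There is no serious obstacle here; the only points demanding care are verifying that $S$ always meets $Z$, checking that the extra inclusion--exclusion terms genuinely vanish for $\ell\geq 0$, and keeping the reindexing $\binom{n+\ell+2}{n-1}=\binom{n+\ell+2}{\ell+3}$ straight. I would sanity-check the identity on a couple of small cases, e.g. $(\ell,n)=(0,1)$ giving $5$ and $(\ell,n)=(0,2)$ giving $14$, to guard against an off-by-one.
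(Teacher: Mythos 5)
Your argument is correct. The structural observations are right: since $Y$ and $Z$ are disjoint with $|Y|+|Z|=|X|-1$, the complement $X\setminus Z$ has only $3$ elements, so no $(\ell+4)$-subset can avoid $Z$, and the count reduces to $(\ell+4)$-subsets meeting $Y$. Your case split on $|S\cap Y|\in\{1,2\}$ gives $2\binom{n+\ell+2}{\ell+3}+\binom{n+\ell+2}{\ell+2}$, and the Pascal manipulation correctly identifies this with $\binom{n+\ell+3}{n}+\binom{n+\ell+2}{n-1}$, which is the closed form for $\mathrm{ehr}(\mathcal{O}(I_{\ell+1}\oplus (I_1+I_1)),n)$ displayed in the text just before the proposition; the sanity checks $5$ and $14$ for $\ell=0$ match the coefficients of $\frac{1+x}{(1-x)^4}$.

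The paper takes a less self-contained route: it quotes Janji\'c's general enumeration formula
$F(r,t,P,m)=\sum_{I\subseteq[r]}(-1)^{|I|}\binom{|X|-\sum_{i\in I}p_i}{r+t}$
for the number of $(r+t)$-subsets meeting each of $r$ prescribed blocks, specializes to $r=2$, $p_1=2$, $p_2=n+\ell+1$, $m=1$, $t=\ell+2$, and obtains $\binom{n+\ell+4}{\ell+4}-\binom{n+\ell+2}{\ell+4}$ (the two remaining inclusion--exclusion terms $\binom{3}{\ell+4}$ and $\binom{1}{\ell+4}$ vanishing, exactly as in your alternative route). So your secondary inclusion--exclusion bookkeeping is literally the paper's computation, while your primary direct count is a more elementary replacement for the citation. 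What the paper's approach buys is uniformity --- Janji\'c's formula handles arbitrarily many blocks at once, which is the source of the whole family of OEIS comments being interpreted here --- whereas your version makes the two-block case transparent without any external reference. Either is acceptable; no gaps.
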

\begin{proof}
In \cite{Janjic08}, Janji\'{c} gives a general result. Let a set $P=\{P_1, P_2, \ldots, P_r\}$ of positive integers and $m ,t\in \mathbb{N}$, the set $X=\bigcup_{i=1}^{r+1}X_i$, $|X_i|=p_i$ for $1\leq i\leq r$, and an additional $m$-block $X_{r+1}$. So $|X|=p_1+p_2+\cdots +p_r+m$. Janji\'{c} defined the function $F(r, t, P, m)$ to be the number $(r+t)$-subset of $X$ intersecting each $X_i$, $1\leq i\leq r$, and obtained
\[
F(r, t, P, m)=\sum_{I\in [r]}(-1)^{|I|}\binom{|X|-\sum_{i\in I}p_i}{r+t},
\]
where the sum is taken over all subsets of $[r]$.

Let $r=2, p_1=2, p_2=n+\ell+1$, $m=1$. We have $|X|=n+\ell+4$ and $t=\ell+2$. Therefore
\[
F(2,\ell+2,P,1)=\binom{n+\ell+4}{\ell+4}-\binom{n+\ell+2}{\ell+4}=\binom{n+\ell+3}{n}+\binom{n+\ell+2}{n-1}.
\]
This completes the proof.
\end{proof}

\begin{prop}\label{sl2set2}
Suppose $\ell\geq 0$, $n\geq 1$. $2\cdot \mathrm{ehr}(\mathcal{O}(I_{\ell+1}\oplus (I_1+I_1)),n)$ is the number of ways to place $\ell+2$ queens on an $(n+\ell+2)\times (n+\ell+2)$ chessboard so that they diagonally attack each other exactly $\binom{\ell+2}{2}$ times. No two elements of these queens have a common coordinate. The maximal possible attack number $\binom{\ell+2}{2}$ for $\ell+2$ queens is achievable only when all queens are on the same diagonal.
\end{prop}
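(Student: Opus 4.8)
The plan is to reduce the chessboard count to the binomial formula for $\mathrm{ehr}(\mathcal{O}(I_{\ell+1}\oplus (I_1+I_1)),n)$ already recorded in the displayed computation preceding Proposition \ref{sl2set1}. From that display, $\mathrm{ehr}(\mathcal{O}(I_{\ell+1}\oplus (I_1+I_1)),n)=\binom{n+\ell+3}{n}+\binom{n+\ell+2}{n-1}=\binom{n+\ell+3}{\ell+3}+\binom{n+\ell+2}{\ell+3}$ for $n\geq 1$, and Pascal's rule rewrites this as $\binom{n+\ell+2}{\ell+2}+2\binom{n+\ell+2}{\ell+3}$; this last form is the target against which I will match the queen count. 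Throughout I set $m=\ell+2$ and $N=n+\ell+2$, so the board is $N\times N$ and we place $m$ (indistinguishable) queens.

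Next I would translate the condition on attacks into a geometric statement, using the unblocked convention that two queens attack diagonally precisely when they lie on a common line of slope $+1$ or slope $-1$. Since $m$ queens form $\binom{m}{2}$ pairs, demanding exactly $\binom{m}{2}$ diagonal attacks forces \emph{every} pair to lie on a common diagonal. The key structural lemma is: a set of $m$ cells with no two sharing a row or a column has all $\binom{m}{2}$ pairs diagonal if and only if all $m$ cells lie on a single line of slope $+1$, or all on a single line of slope $-1$. The ``if'' direction is immediate, since two distinct cells on a slope-$\pm1$ line automatically have distinct rows and distinct columns. For the ``only if'' direction with $m\geq 3$, take any three queens $A,B,C$; if $AB$ had slope $+1$ and $BC$ slope $-1$, writing $A=B+(t,t)$ and $C=B+(s,-s)$ with $t,s\neq 0$ gives $A-C=(t-s,t+s)$, whose slope is $\pm1$ only when $s=0$ or $t=0$, a contradiction. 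Hence each triple is monochromatic in slope and collinear, and fixing two queens shows all $m$ lie on one line. This lemma is exactly the last sentence of the proposition.

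Then I would count. The slope-$+1$ configurations and the slope-$-1$ configurations are disjoint, because two distinct cells lie on a unique line; and the reflection $(i,j)\mapsto(i,N+1-j)$ of the board carries one family bijectively onto the other. So the total is $2C_+$, where $C_+$ is the number of $m$-subsets of cells lying on a single slope-$+1$ diagonal. The slope-$+1$ diagonals of an $N\times N$ board consist of one of length $N$ together with two of each length $d$ for $1\leq d\leq N-1$, so $C_+=\binom{N}{m}+2\sum_{d=m}^{N-1}\binom{d}{m}=\binom{N}{m}+2\binom{N}{m+1}$ by the hockey-stick identity. Substituting $N=n+\ell+2$ and $m=\ell+2$ yields $C_+=\binom{n+\ell+2}{\ell+2}+2\binom{n+\ell+2}{\ell+3}$, which is precisely the expression for $\mathrm{ehr}(\mathcal{O}(I_{\ell+1}\oplus (I_1+I_1)),n)$ from the first step. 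Therefore the number of admissible queen placements equals $2\cdot\mathrm{ehr}(\mathcal{O}(I_{\ell+1}\oplus (I_1+I_1)),n)$, as claimed.

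The main obstacle is the structural lemma, specifically making the ``maximum attained only on a common diagonal'' argument airtight: one must handle the slope-mixing step for triples carefully (checking that no mixed pair of slopes is possible for three non-attacking-in-rows-columns cells) and confirm the disjointness of the two slope families, so that the factor of $2$ is a genuine doubling and not an artifact of double counting. Once the lemma is in place, the remainder is just the hockey-stick identity and one application of Pascal's rule.
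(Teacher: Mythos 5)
Your proof is correct and follows essentially the same route as the paper: the paper simply asserts that the admissible placements number $\bigl(\binom{n+\ell+2}{\ell+2}+\sum_{i=0}^{n-1}2\binom{i+\ell+2}{\ell+2}\bigr)\cdot 2$ and checks equality with $2\cdot\mathrm{ehr}(\mathcal{O}(I_{\ell+1}\oplus(I_1+I_1)),n)$, which is exactly your enumeration of one diagonal of length $N$ plus two of each length $d<N$, doubled over the two slopes, followed by the hockey-stick and Pascal identities. You additionally supply the structural lemma (maximal attack number forces a single common diagonal, and the two slope families are disjoint), which the paper leaves implicit, so your write-up is if anything more complete.
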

\begin{proof}
The number of ways to place $\ell+2$ queens on an $(n+\ell+2)\times (n+\ell+2)$ chessboard so that they diagonally attack each other exactly $\binom{\ell+2}{2}$ times is
\[
\left(\binom{n+\ell+2}{\ell+2}+\sum_{i=0}^{n-1}2\cdot \binom{i+\ell+2}{\ell+2}\right)\cdot 2.
\]
It is easy to check that the above formula is equal to $2\cdot \mathrm{ehr}(\mathcal{O}(I_{\ell+1}\oplus (I_1+I_1)),n)$. This completes the proof.
\end{proof}

\begin{prop}\label{sl2set3}
Suppose $\ell\geq 0$, $n\geq 0$.  $\mathrm{ehr}(\mathcal{O}(I_{\ell+1}\oplus (I_1+I_1)),n)$ is the number of $\frac{1}{2^{\ell+2}}$ of $(\ell+4)$-th unsigned column of triangle \cite[A053120]{Sloane23} (T-Chebyshev, rising powers, zeros omitted), triangle of coefficients of the Chebyshev polynomial $T_m(x), m\geq 0$ of the first kind.
\end{prop}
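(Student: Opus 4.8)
The plan is to reduce the identity to the classical closed form for the coefficients of the Chebyshev polynomials of the first kind, together with one elementary factorial manipulation. First I would record, from the expansion of $\mathrm{Ehr}(\mathcal{O}(I_{\ell+1}\oplus (I_1+I_1)),x)=(1+x)/(1-x)^{\ell+4}$ displayed just before Proposition \ref{sl2set1}, that
\[
\mathrm{ehr}(\mathcal{O}(I_{\ell+1}\oplus (I_1+I_1)),n)=\binom{n+\ell+3}{n}+\binom{n+\ell+2}{n-1}\qquad(n\geq 1),
\]
with value $1$ at $n=0$. Factoring out $(n+\ell+2)!\,/\,(n!\,(\ell+3)!)$ turns the right-hand side into
\[
\mathrm{ehr}(\mathcal{O}(I_{\ell+1}\oplus (I_1+I_1)),n)=\frac{2n+\ell+3}{\,n+\ell+3\,}\binom{n+\ell+3}{n},
\]
and one checks directly that this compact form also gives $1$ at $n=0$. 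This is the first (routine) step.

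Next I would bring in the standard expansion
\[
T_m(x)=\frac{m}{2}\sum_{j=0}^{\lfloor m/2\rfloor}\frac{(-1)^j}{\,m-j\,}\binom{m-j}{j}(2x)^{m-2j}\qquad(m\geq 1),
\]
so that $\bigl|[x^{m-2j}]\,T_m(x)\bigr|=2^{\,m-2j-1}\,\dfrac{m}{\,m-j\,}\binom{m-j}{j}$. The $(\ell+4)$-th unsigned column of the triangle \cite[A053120]{Sloane23} (rising powers, zeros omitted) is the column recording the coefficients of $x^{\ell+3}$, the zero entries coming from the polynomials $T_m$ with $m\not\equiv\ell+3\pmod 2$ being deleted; its $(n+1)$-st entry is therefore $\bigl|[x^{\ell+3}]\,T_{\ell+3+2n}(x)\bigr|$. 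Putting $m=\ell+3+2n$ and $j=n$ in the displayed formula yields
\[
\bigl|[x^{\ell+3}]\,T_{\ell+3+2n}(x)\bigr|=2^{\,\ell+2}\,\frac{2n+\ell+3}{\,n+\ell+3\,}\binom{n+\ell+3}{n},
\]
and dividing by $2^{\ell+2}$ recovers exactly the compact form from the first step, which proves the proposition.

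I expect no substantive obstacle; the only delicate points are bookkeeping ones. One must pin down precisely which column of \cite[A053120]{Sloane23} is meant by ``the $(\ell+4)$-th''---namely the one recording $[x^{\ell+3}]T_m$---and align the dilation parameter $n\geq 0$ with the row index $\ell+3+2n$ rather than with $\ell+3+2(n-1)$. The case $\ell=0$, where the column in question is $1,5,14,30,\dots$ (the square pyramidal numbers \cite[A000330]{Sloane23}) and $2^{\ell+2}=4$, provides a convenient sanity check. Everything else is the elementary manipulation of factorials already indicated.
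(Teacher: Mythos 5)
Your proposal is correct, and it follows the same skeleton as the paper's proof (identify the entries of the $(\ell+4)$-th unsigned column as $\bigl|[x^{\ell+3}]T_{\ell+3+2n}(x)\bigr|$, then match against the coefficient extraction from $(1+x)/(1-x)^{\ell+4}$), but it differs in one substantive and advantageous way. The paper starts from the alternating-sum expression $t_{m-2k}^{(m)}=(-1)^k\sum_{j\geq k}\binom{m}{2j}\binom{j}{k}$ taken from Rivlin, which leads to the identity
\[
\sum_{j=n}^{\left\lfloor\frac{\ell+2n+3}{2}\right\rfloor}\binom{\ell+2n+3}{2j}\binom{j}{n}=2^{\ell+2}\left(\binom{n+\ell+3}{n}+\binom{n+\ell+2}{n-1}\right),
\]
and this is then checked by Maple. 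You instead invoke the closed product form $\bigl|[x^{m-2j}]T_m(x)\bigr|=2^{m-2j-1}\frac{m}{m-j}\binom{m-j}{j}$, after which the whole verification collapses to the one-line factorial identity $\binom{n+\ell+3}{n}+\binom{n+\ell+2}{n-1}=\frac{2n+\ell+3}{n+\ell+3}\binom{n+\ell+3}{n}$, done by hand; I checked this identity and the specialization $m=\ell+3+2n$, $j=n$, and both are right, including the boundary case $n=0$. The net effect is a fully self-contained, computer-free proof, at the modest cost of having to cite (or derive from the recurrence) the product formula for the Chebyshev coefficients rather than the summation formula the paper uses. Your indexing of the column (coefficients of $x^{\ell+3}$, zeros from parity omitted, row $m=\ell+3+2n$ giving entry $n$) agrees with the paper's, and the $\ell=0$ sanity check against A000330 is the right one.
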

\begin{proof}
The Chebyshev polynomial of the first kind is defined recursively by
\[
T_0(x)=1,\ \ T_1(x)=x,\ \ T_{m+1}(x)=2xT_m(x)-T_{m-1}(x).
\]
Let $T_m(x)=t_0^{(m)}+t_1^{(m)}x+\cdots +t_m^{(m)}x^m$. Then the following formula holds (see \cite{TJ.Rivlin})
\begin{align*}
&t_{m-(2k+1)}^{(m)}=0, k=0,1,\ldots,\left\lfloor\frac{m-1}{2}\right\rfloor,\\
&t_{m-2k}^{(m)}=(-1)^k\sum_{j=k}^{\left\lfloor \frac{m}{2}\right\rfloor}\binom{m}{2j}\binom{j}{k}, k=0,1,\ldots,\left\lfloor\frac{m}{2}\right\rfloor.
\end{align*}
Therefore, the generating function of the $r$-th ($r\geq1$) unsigned column of triangle \cite[A053120]{Sloane23} (T-Chebyshev, rising powers, zeros omitted) is
\[
\sum_{n\geq 0}\sum_{j=n}^{\left\lfloor\frac{r+2n-1}{2} \right\rfloor}\binom{r+2n-1}{2j}\binom{j}{n}x^n.
\]
Now we only need to verify
\[
\sum_{j=n}^{\left\lfloor\frac{\ell+2n+3}{2} \right\rfloor}\binom{\ell+2n+3}{2j}\binom{j}{n}=2^{\ell+2}\left(\binom{n+\ell+3}{n}+\binom{n+\ell+2}{n-1}\right).
\]
We can check that the above formula is correct using Maple. This completes the proof.
\end{proof}

In \cite[A050486]{Sloane23}, one obvious result is that $\mathrm{ehr}(\mathcal{O}(I_{\ell+1}\oplus (I_1+I_1)),n)$ is the $(\ell+2)$-th partial sum of binomials transform of $[1,2,0,0,\ldots]$, i.e.,
\[
\mathrm{ehr}(\mathcal{O}(I_{\ell+1}\oplus (I_1+I_1)),n)=\sum_{i=0}^n\binom{n+\ell+2}{i+\ell+2}b(i),\ b(i)=(1,2,0,0,\ldots).
\]
This is the comments of the sequences $\mathrm{ehr}(\mathcal{O}(I_{\ell+1}\oplus (I_1+I_1)),n)$, $0\leq \ell\leq 9$ (see Table \ref{lollipop}).

\section{The Ferrers Poset $P_{\lambda}$}\label{sec-Ferrers}

Let $m\in \mathbb{N}$, $\lambda \vdash m$ be a partition. In this section, we consider the \emph{Ferrers poset} $P_{\lambda}$ of shape $\lambda$. For example, if $\lambda=(4, 3, 3, 2, 1)$, the \emph{Ferrers diagram} of $\lambda$ is the left figure in Figure\ref{G19}. The right figure depicts the \emph{Ferrers poset} $P_{\lambda}$  of shape $\lambda$ (rotating clockwise by $45$ degrees). This poset has a unique maximal element $a_1$ and four minimal elements $b_1, b_2, b_3, b_4$.

\begin{figure}[htp]
\centering
\includegraphics[width=0.55\linewidth]{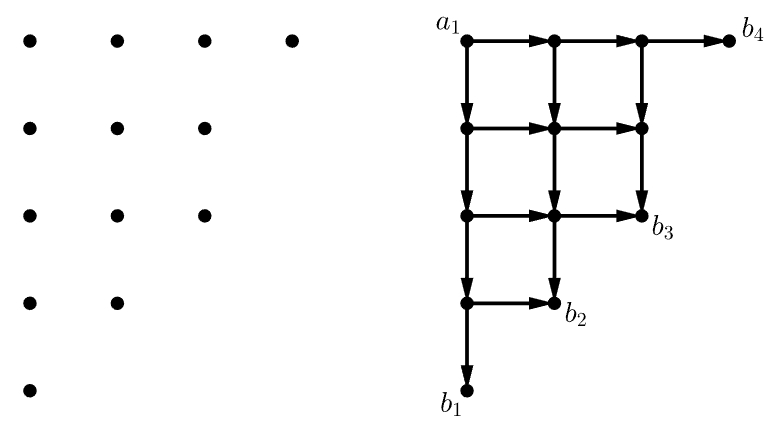}
\caption{The Ferrers diagram and Ferrers poset $P_{\lambda}$ with $\lambda=(4,3,3,2,1)$.}
\label{G19}
\end{figure}

\subsection{The General Case: $\lambda=(\lambda_1,\lambda_2,\ldots,\lambda_t)$.}

We consider the Ehrhart polynomial of the order polytope $\mathcal{O}(P_{\lambda})$. We need the following result from the Lindstr\"om-Gessel-Viennot on non-intersecting lattice paths.

\begin{lem}[Lindstr\"om-Gessel-Viennot, \cite{GesselViennot,Lindstrom}]\label{LGV-noninter-path}
Let $S_1,S_2,\ldots,S_p$ and $E_1,E_2,\ldots,E_p$ be lattice points, with the property that if $1\leq i< j\leq p$ and $1\leq k<\ell \leq p$, then any path from $S_i$ to $E_{\ell}$ must intersect any path from $S_j$ to $E_k$. Then the number of families $(P_1,P_2,\ldots,P_p)$ of non-intersecting lattice paths, where $P_i$ runs from $S_i$ to $E_i$, $i=1,2,\ldots,p$, is given by
\begin{align*}
\det_{1\leq i,j\leq p}(|P(S_j\rightarrow E_i)|)
\end{align*}
where $P(S\rightarrow E)$ denotes the set of all lattice paths from $S$ to $E$.
\end{lem}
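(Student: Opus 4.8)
The plan is to prove the lemma by the classical Lindstr\"om--Gessel--Viennot sign-reversing involution. First I would expand the determinant over the symmetric group,
\[
\det_{1\le i,j\le p}\bigl(|P(S_j\rightarrow E_i)|\bigr)=\sum_{\sigma\in\mathfrak{S}_p}\operatorname{sgn}(\sigma)\prod_{i=1}^{p}|P(S_{\sigma(i)}\rightarrow E_i)|,
\]
and reinterpret each summand combinatorially: it counts, with sign $\operatorname{sgn}(\sigma)$, the pairs $\bigl(\sigma,(P_1,\dots,P_p)\bigr)$ in which $P_i$ is a lattice path from $S_{\sigma(i)}$ to $E_i$. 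Writing $\mathcal{F}$ for the set of all such pairs, the determinant equals $\sum_{(\sigma,(P_i))\in\mathcal{F}}\operatorname{sgn}(\sigma)$. I then partition $\mathcal{F}$ into the family $\mathcal{F}_0$ of pairs whose $p$ paths are pairwise disjoint and the family $\mathcal{F}_1$ of pairs in which some two paths share a lattice point, so that the goal becomes to show the contribution of $\mathcal{F}_1$ cancels and each element of $\mathcal{F}_0$ is counted exactly once with sign $+1$.

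To kill $\mathcal{F}_1$ I would construct a fixed-point-free, sign-reversing involution $\iota$ on $\mathcal{F}_1$. Given $(\sigma,(P_1,\dots,P_p))\in\mathcal{F}_1$, let $i$ be the smallest index whose path meets another path, let $v$ be the first lattice point on $P_i$ (reading from $S_{\sigma(i)}$) that also lies on some other path, and let $j$ be the smallest index $\neq i$ with $v\in P_j$; then $\iota$ replaces $P_i,P_j$ by the paths $P_i',P_j'$ obtained by interchanging their portions after $v$. The new configuration has the path ending at $E_i$ starting at $S_{\sigma(j)}$ and the path ending at $E_j$ starting at $S_{\sigma(i)}$, so its underlying permutation is $\sigma\cdot(i\,j)$ and hence $\operatorname{sgn}$ flips. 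Because the point sets satisfy $P_i\cup P_j=P_i'\cup P_j'$, the portion of the path before $v$ is unchanged, and each index other than $i,j$ is untouched, the triple $(i,v,j)$ selected by the rule is the same for the new configuration; thus $\iota$ is an involution, and it has no fixed point since $\sigma\cdot(i\,j)\neq\sigma$. Therefore $\sum_{\mathcal{F}_1}\operatorname{sgn}(\sigma)=0$ and the determinant equals $\sum_{(\sigma,(P_i))\in\mathcal{F}_0}\operatorname{sgn}(\sigma)$.

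It remains to identify $\mathcal{F}_0$, and this is where the hypothesis on the start and end points enters. If $(\sigma,(P_1,\dots,P_p))\in\mathcal{F}_0$ and $\sigma\neq\mathrm{id}$, choose an inversion $a<b$ with $\sigma(a)>\sigma(b)$; applying the stated intersection property with $i=\sigma(b)<j=\sigma(a)$ and $k=a<\ell=b$ forces the path $P_b$ from $S_{\sigma(b)}$ to $E_b$ to meet the path $P_a$ from $S_{\sigma(a)}$ to $E_a$, contradicting disjointness. Hence every surviving pair has $\sigma=\mathrm{id}$, its paths run from $S_i$ to $E_i$, and its sign is $+1$, so the determinant counts precisely the families $(P_1,\dots,P_p)$ of non-intersecting lattice paths with $P_i$ from $S_i$ to $E_i$. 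The one step that needs genuine care is the middle one: formulating the tie-breaking rule for $(i,v,j)$ precisely enough that $\iota$ is visibly well defined, sign-reversing, involutive, and fixed-point free; the rest is formal bookkeeping. (One could instead simply cite \cite{GesselViennot,Lindstrom} for this verification, but spelling it out keeps the section self-contained.)
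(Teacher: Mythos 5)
The paper itself offers no proof of this lemma --- it is quoted with attributions to Lindstr\"om and Gessel--Viennot --- so your write-up supplies the standard sign-reversing-involution argument from those sources. The determinant expansion over $\mathfrak{S}_p$, the decomposition $\mathcal{F}=\mathcal{F}_0\cup\mathcal{F}_1$, and the final identification of $\mathcal{F}_0$ with the identity permutation via the inversion argument (taking $i=\sigma(b)$, $j=\sigma(a)$, $k=a$, $\ell=b$ in the hypothesis) are all correct.

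The one step you flagged as needing care is, however, genuinely wrong as written: your $\iota$ is not an involution. You index the paths by their endpoints ($P_i$ ends at $E_i$), choose $v$ as the first common point of $P_i$ with another path \emph{reading from the start} $S_{\sigma(i)}$, and then exchange the tails after $v$. After the exchange, the path ending at $E_i$ --- the one your selection rule will again call ``the path with index $i$'' --- has as its initial segment the old head of $P_j$ from $S_{\sigma(j)}$ to $v$, not the old head of $P_i$. So your assertion that ``the portion of the path before $v$ is unchanged'' fails under your own indexing, and nothing prevents the old head of $P_j$ from meeting a third path $P_k$ (with $k>i$, $k\neq j$) at a point $w$ that precedes $v$ along it. In that case the rule applied to $\iota(\sigma,(P_\bullet))$ selects the triple $(i,w,k)$ rather than $(i,v,j)$, and $\iota^2\neq\mathrm{id}$. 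The standard fix is to make the selection rule stable under the swap: either index the paths by their \emph{starting} points and let $v$ be the first intersection point along the path whose head is kept fixed by the tail-swap, or keep the endpoint indexing but read $P_i$ from $E_i$ and exchange the initial segments before $v$. One should also record that the lattice paths are self-avoiding (automatic for monotone east/north paths), since the stability of the chosen point $v$ uses that the retained head cannot meet the transplanted tail of the same original path. With these repairs the rest of your argument goes through.
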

The Lindstr\"om-Gessel-Viennot lemma on non-intersecting lattice paths has a wide range of applications. See \cite{Brenti93,Gessel85}.

\begin{thm}\label{GeneralCase}
Let $\lambda=(\lambda_1,\lambda_2,\ldots,\lambda_t)$ be a partition. The Ehrhart polynomial of $\mathcal{O}(P_{\lambda})$ is given by
\begin{align}\label{GeneralFormula}
\mathrm{ehr}(\mathcal{O}(P_{\lambda}),n)=\det_{1\leq i,j\leq t}\left(\binom{n+i+\lambda_{t-j+1}-1}{n+i-j}\right)
=\det_{1\leq i,j\leq t}\left(\binom{n+\lambda_{t-j+1}}{n+i-j}\right).
\end{align}
\end{thm}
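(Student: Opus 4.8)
The plan is to realize $\mathrm{ehr}(\mathcal{O}(P_\lambda),n)$ as the number of a family of $t$ non-intersecting lattice paths and then apply the Lindström--Gessel--Viennot lemma (Lemma~\ref{LGV-noninter-path}). Recall from the introduction that $\mathrm{ehr}(\mathcal{O}(P_\lambda),n)=\Omega_{P_\lambda}(n+1)$ counts order-preserving maps $P_\lambda\to\mathbf{n+1}$, equivalently the number of plane partitions of shape $\lambda$ whose entries lie in $\{0,1,\dots,n\}$ (weakly decreasing along rows and down columns once the poset is oriented correctly; this is exactly the ``$i$-th row has at most $\lambda_i$ parts, largest part $\le n$'' description quoted in the introduction). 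So the first step is to carefully set up this bijection between order-preserving maps on $P_\lambda$ and column-strict/row-weak fillings, fixing conventions so that the reindexing $\lambda_{t-j+1}$ in the statement comes out right.

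The second step is the classical Lindström--Gessel--Viennot encoding of such plane partitions by non-intersecting lattice paths: a plane partition of shape $\lambda$ with parts in $\{0,\dots,n\}$ corresponds to a family of $t$ monotone lattice paths (say taking unit north and east steps), where the $i$-th path records the $i$-th row of the filling as a staircase. Choosing the start points $S_i$ and end points $E_i$ so that path $i$ has length governed by $\lambda_{t-i+1}$ and by $n$, the non-intersecting condition is precisely the weak-decrease-down-columns condition, and the ``compatibility'' hypothesis of Lemma~\ref{LGV-noninter-path} (any $S_i\to E_\ell$ path meets any $S_j\to E_k$ path for $i<j$, $k<\ell$) holds automatically because all paths are monotone and the start/end points are arranged along two antidiagonals. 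I would place $S_j$ and $E_i$ at the standard coordinates for which the number of monotone lattice paths $|P(S_j\to E_i)|$ is the single binomial $\binom{n+i+\lambda_{t-j+1}-1}{n+i-j}$; then Lemma~\ref{LGV-noninter-path} gives
\[
\mathrm{ehr}(\mathcal{O}(P_\lambda),n)=\det_{1\le i,j\le t}\left(\binom{n+i+\lambda_{t-j+1}-1}{n+i-j}\right).
\]
Finally, the second equality in \eqref{GeneralFormula} is the trivial identity $\binom{a}{b}$ depends only on the top and bottom entries: since $n+i+\lambda_{t-j+1}-1=(n+\lambda_{t-j+1})+(i-1)$ and we may rewrite the binomial with top $n+\lambda_{t-j+1}$ after a shift absorbed consistently across the row, or simply note $\binom{n+i+\lambda_{t-j+1}-1}{n+i-j}=\binom{n+\lambda_{t-j+1}}{\,(n+i-j)-(i-1)\,}=\binom{n+\lambda_{t-j+1}}{n+i-j}$ — wait, this needs the standard normalization; more carefully, one uses that the two determinants differ by elementary row operations (or are literally equal entrywise under the convention $\binom{x}{y}=x!/(y!(x-y)!)$ with the observation that $(n+i+\lambda_{t-j+1}-1)-(n+i-j)=\lambda_{t-j+1}+j-1$ is independent of $i$ in a way that lets the $i$ cancel), which I would spell out in one line.

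The main obstacle is bookkeeping: getting the orientation of $P_\lambda$, the direction of monotonicity of the plane partition, and the indexing of start/end points all mutually consistent so that the exponent $t-j+1$ and the arguments $n+i+\lambda_{t-j+1}-1$ and $n+i-j$ emerge exactly as written, rather than some transpose or reflection of them. There is no deep difficulty — LGV does all the real work — but it is easy to be off by a reindexing $j\mapsto t-j+1$ or by a $\pm 1$ in the binomial, so I would state the start/end coordinates explicitly and verify the formula on a small case such as $\lambda=(k^t)$ or $\lambda=(k,k-1,\dots,1)$, which also serves as a sanity check against the later specializations (Theorems~\ref{CnjOnekk-1} and \ref{hhsnthmschu}) and against Kreweras' original determinant in \cite{G.Kreweras}.
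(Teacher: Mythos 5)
Your overall strategy is exactly the paper's first proof: identify $\mathrm{ehr}(\mathcal{O}(P_\lambda),n)$ with the count of plane partitions of shape $\lambda$ with largest part at most $n$, encode the rows (read from bottom to top, which is where the index $t-j+1$ comes from) as a family of $t$ monotone lattice paths, and apply Lemma~\ref{LGV-noninter-path}. The paper takes $S_j=(-\lambda_{t-j+1}-(j-1),\,j-1)$ and $E_i=(0,\,n+i-1)$, which yields $|P(S_j\to E_i)|=\binom{n+i+\lambda_{t-j+1}-1}{n+i-j}$ and hence the first determinant; up to fixing the coordinates you deferred, your argument is the same.

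The one genuine flaw is your treatment of the second equality. The identity you first write down,
\[
\binom{n+i+\lambda_{t-j+1}-1}{n+i-j}=\binom{n+\lambda_{t-j+1}}{n+i-j},
\]
is false entrywise for $i>1$ (the lower indices agree but the upper indices differ by $i-1$), and your fallback claim that the two matrices are ``literally equal entrywise'' is likewise wrong; only the determinants coincide. You sensed this (``wait, this needs\dots''), but the fix you gesture at is not pinned down. The paper's resolution is clean and worth knowing: keep the same starting points $S_j$ and the same bijection with plane partitions, but move the endpoints to $E_i=(-i+1,\,n+i-1)$; the non-intersecting families are the same objects, while the single-path count becomes $|P(S_j\to E_i)|=\binom{n+\lambda_{t-j+1}}{n+i-j}$, so LGV directly produces the second determinant. (Alternatively one can pass between the two determinants by row operations using Pascal's rule, but that requires an actual computation, not the entrywise identity you proposed.) With that step repaired, your proof is complete and coincides with the paper's.
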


For $\lambda=(\lambda_1,\lambda_2,\ldots,\lambda_t)$, the Ehrhart polynomial of the order polytope $\mathcal{O}(P_{\lambda})$ counts the number of plane partitions such that the $i$-th row has at most $\lambda_i$ parts, and the largest part is less than or equal to $n$. The determinant formula for this counting problem is first given by Kreweras \cite[Section 2.3.7]{G.Kreweras}. An equivalent result also appears in MacMahon's book \cite[Page 243]{MacMahon19} and has been frequently rediscovered in various guises. A probabilistic method for this counting problem was given by Gessel \cite{Gessel86}. A linear recursion was given by Pemantle and Wilf \cite{Pemantle09}. When $\lambda=(k-d, k-2d,\ldots ,k-td)$, the determinant can be explicitly evaluated. See \cite[Exercise 7.101(b)]{RP.Stanley99}. A brief introduction to this problem can also be found in \cite[Exercise 149]{RP.Stanley}. For an extensive survey of the evaluation of combinatorial determinants, see \cite{Krattenthaler01,Krattenthaler05},

In this section, we will reestablish the determinant formula by the Lindstr\"om-Gessel-Viennot lemma on non-intersecting lattice paths. We will provide two distinct proofs, each based on different combinatorial interpretations. As applications, we will derive specific results for the two cases $\lambda=(k, k-1,\ldots ,1)$ and $\lambda=(k,k,\ldots,k)$. These two cases allow us to rediscover the determinant formula presented in Theorem \ref{GeneralCase}.

\begin{proof}[The first proof of Theorem \ref{GeneralCase}]
Let us consider the first ``$=$". The left hand side equals $\#\mathcal{L}$, where $\mathcal{L}$ is the set of plane partitions whose $i$-th row has at most $\lambda_i$ parts and whose largest part is less than or equal to $n$. The right hand side equals $\#\mathcal{R}$, where $\mathcal{R}$ is a set to be constructed as follows.
Then we will establish a bijection from $\mathcal{L}$ to $\mathcal{R}$ for the first equality.

Let $S_j=(-\lambda_{t-j+1}-(j-1),j-1), 1\leq j\leq t$ and $E_i=(0,n+i-1), 1\leq i\leq t$ be the starting and end vertices, respectively. Consider $t$ tuples of lattice paths from the $S_j$'s to the $E_i$'s that always move one unit east or north. It is easy to show that the $t$ tuple of lattice paths satisfies the condition of Lemma \ref{LGV-noninter-path}. Apply Lemma \ref{LGV-noninter-path} with the easy fact $|P(S_j\rightarrow E_i)|=\binom{n+i+\lambda_{t-j+1}-1}{n+i-j}$. Then we shall construct $\mathcal{R}$ as the set of $t$ tuples of non-intersecting lattice paths from $S_i$ to $E_i$ for all $i$.

Given a plane partition $P$ in $\mathcal{L}$. It is of the form
\begin{align*}
\begin{matrix}
&a_{1,1}\ \ & a_{1,2}\ \ &\cdots\ \ & a_{1,\lambda_1-1}\ \ & a_{1,\lambda_1}
\\& a_{2,1}\ \ & a_{2,2}\ \ & \cdots\ \ & a_{2,\lambda_2}\ \ &
\\P=\ \ &\cdots\ & &  & &
\\& a_{(t-1),1}\ \ & \cdots & a_{(t-1),\lambda_{t-1}} & &
\\& a_{t,1} & \cdots &  a_{t,\lambda_t} & &.
\end{matrix}
\end{align*}
Observe that each row of $P$ is an ordinary partition (allow zero entries). Denote the $i$-th ordinary partition as $\omega_{i}=(a_{(t-i+1),1},a_{(t-i+1),2},\ldots,a_{(t-i+1),\lambda_{t-i+1}})$ from bottom to top, $1\leq i\leq t$.

We represent each ordinary partition $\omega_i$ as a lattice path from $S_i=(-\lambda_{t-j+1}-(j-1),j-1)$ to $E_i=(0,n+i-1)$ by walking north and east. There are $ab$ squares in the rectangle that delimit possible lattice paths from $(0,0)$ to $(a,b)$. For each lattice path, if we put a dot inside each square underneath the lattice path, then we get the Ferrers diagram of a partition from right to left. In Figure \ref{OP11}, the path corresponds to the partition $(12,10,10,7,2)$. The correspondence is one-to-one.
\begin{figure}[htp]
\centering
\includegraphics[width=0.22\linewidth]{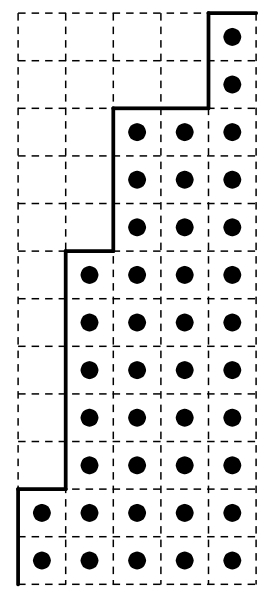}
\caption{The correspondence between lattice path and partition $(12,10,10,7,2)$.}
\label{OP11}
\end{figure}

Because the element in $\mathcal{R}$ is a $t$ tuple of non-intersecting lattice paths, we only need to represent each ordinary partition $\omega_i$ as a lattice path from $S_i=(-\lambda_{t-j+1}-(j-1),j-1)$ to $E_i=(-i+1,n+i-1)$ (If not, two paths must intersect. See Figure \ref{OP12}). As an example, suppose $n=13$, the plane partition
\begin{align*}
&12\ \ 10\ \ 10\ \ \ 7\ \ \ 2
\\&11\ \ \ 9\ \ \ 8\ \ \ \ 6
\\P= \; &11\ \ \ 9\ \ \ 5
\\&6\ \ \ \ 4
\\&3
\end{align*}
corresponds to the tuple of lattice paths given in Figure \ref{OP12}.
\begin{figure}[htp]
\centering
\includegraphics[width=.4\linewidth]{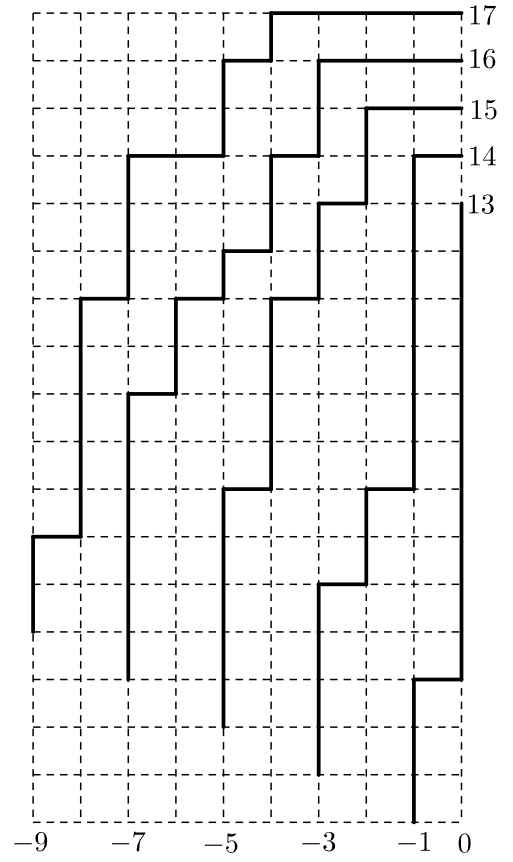}
\caption{The tuple of non-intersecting lattice paths.}
\label{OP12}
\end{figure}

The condition that each column of the plane partition is weakly decreasing is equivalent to the statement that the paths are non-intersecting. Therefore, we established a one-to-one correspondence between the set $\mathcal{L}$ and the set $\mathcal{R}$. This completes the proof of the first ``$=$".

The proof of the second ``$=$" is similar to the first ``$=$". We just need to modify the coordinates of the ending vertices by setting $E_i=(0,n+i-1)$ to $E_i=(-i+1,n+i-1)$. This completes the proof of the second ``$=$".
\end{proof}

Now let us consider the second non-intersecting lattice paths model. This model is inspired by the Jacobi-Trudi identity \cite[Proposition 4.2]{Bressoud}. Before that, we need to introduce the relevant knowledge of symmetric functions \cite[Chapter 7]{RP.Stanley99}.

A \emph{semistandard Young tableaux} (SSYT) of shape $\lambda$ is an array $T=(T_{ij})$ of \textbf{positive integers} of shape $\lambda$ that is weakly increasing in every row and strictly increasing in every column.
If $T$ has $\alpha_i$ parts equal to $i$, then we say that $T$ has {\it type} $\alpha=(\alpha_1, \alpha_2, \ldots)$. Denote $X^{\alpha} := x_1^{\alpha_1}x_2^{\alpha_2}\cdots $.  For instance, a SSYT of shape $\lambda=(4, 3, 3, 2, 1)$ is given by
\begin{align*}
&1\ \ 3\ \ 3\ \ 7
\\&2\ \ 5\ \ 6
\\&4\ \ 6\ \ 7
\\&7\ \ 7
\\&9.
\end{align*}
Furthermore, it has type $\alpha = (1, 1, 2, 1, 1, 2, 4, 0, 1)$, $X^{\alpha}=x_1x_2x_3^2x_4x_5x_6^2x_7^4x_9$.

For a given partition $\lambda=(\lambda_1, \lambda_2, \ldots, \lambda_t) \vdash m$, define the \emph{length} $\ell(\lambda)$ of $\lambda$ by $\ell(\lambda)=t$. The \emph{conjugate partition} $\lambda' = (\lambda'_1, \lambda'_2, \ldots, \lambda'_t)$ of $\lambda$ is defined by the condition that the Ferrers diagram of $\lambda'$ is the transpose of the Ferrers diagram of $\lambda$. For a SSYT  of shape $\lambda$, $T=(T_{ij})$ and a seat $u=(i,j)$, $1\leq j\leq \lambda_i$, the \emph{hook length} $h(u)$ is the number of seats directly to the right or directly below $u$, counting $u$ itself once, i.e., $h(u)=\lambda_i+\lambda^{\prime}_j-i-j+1$. The \emph{content} $c(u)$ is given by $c(u)=j-i$.

\begin{dfn}{\em \cite[Chapter 7]{RP.Stanley99}}
If $\lambda\vdash m$ and $\alpha$ is a weak composition of $m$. Let $K_{\lambda\alpha}$ denote the number of SSYT of shape $\lambda$ and type $\alpha$. $K_{\lambda\alpha}$ is called a Kostka number. The Schur function $s_{\lambda}=s_{\lambda}(X)$ of shape $\lambda$ in the variables $X=(x_1, x_2, \ldots)$ is the formal power series
\[
s_{\lambda}(X)=\sum_{\alpha}K_{\lambda\alpha}X^{\alpha},
\]
summed over all weak compositions $\alpha$ of $m$.
\end{dfn}

Similarly, a \emph{reverse semistandard Young tableaux} (RSSYT) of shape $\lambda$ is an array $T=(T_{ij})$ of positive integers of shape $\lambda$ that is weakly decreasing in every row and strictly decreasing in every column. The number of type $\alpha$ of RSSYT is the same as SSYT.
\begin{prop}{\em \cite[Chapter 7]{RP.Stanley99}}\label{RSSYT}
If $\lambda\vdash m$ and $\alpha$ is a weak composition of $m$. Let $\widehat{K}_{\lambda\alpha}$ be the number of RSSYT of shape $\lambda$ and type $\alpha$. Then $\widehat{K}_{\lambda\alpha}=K_{\lambda\alpha}$. In particular,
\[
s_{\lambda}(X)=\sum_{\alpha}\widehat{K}_{\lambda\alpha}X^{\alpha},
\]
summed over all weak compositions $\alpha$ of $m$.
\end{prop}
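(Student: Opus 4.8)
The plan is to exhibit an explicit, weight-reversing bijection between semistandard and reverse semistandard tableaux, and then to combine it with the known invariance of Kostka numbers under permuting the type.

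First I would fix an integer $N$ large enough that every entry of every SSYT or RSSYT of shape $\lambda$ under consideration lies in $[N]$ (for instance, $N$ at least the number of rows of $\lambda$ and at least the length of $\alpha$), and I would regard all types as length-$N$ weak compositions, padding with zeros when necessary. The key map is complementation: for a filling $T=(T_{ij})$ of shape $\lambda$ with entries in $[N]$, set $T^{c}=(N+1-T_{ij})$. Reversing the linear order on $[N]$ turns ``weakly increasing along rows'' into ``weakly decreasing along rows'' and ``strictly increasing down columns'' into ``strictly decreasing down columns'', so $T\mapsto T^{c}$ restricts to a bijection between SSYT of shape $\lambda$ and RSSYT of shape $\lambda$, and it is clearly an involution. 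A one-line count shows that if $T$ has type $\alpha=(\alpha_1,\ldots,\alpha_N)$ then $T^{c}$ has type $\alpha^{r}:=(\alpha_N,\alpha_{N-1},\ldots,\alpha_1)$, since the cells of $T^{c}$ equal to $i$ are exactly the cells of $T$ equal to $N+1-i$. Hence $\widehat{K}_{\lambda\beta}=K_{\lambda\beta^{r}}$ for every type $\beta$, and this equality does not depend on the chosen $N$, since appending or deleting zero parts changes neither side.

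Next I would invoke the fact that $s_{\lambda}$ is a symmetric function, equivalently that the Kostka number $K_{\lambda\alpha}$ depends only on the multiset of parts of $\alpha$; in particular $K_{\lambda,\alpha^{r}}=K_{\lambda\alpha}$. This is the one nontrivial ingredient, and it is precisely what we may assume from \cite[Chapter 7]{RP.Stanley99}, where it is proved via the Bender--Knuth involutions. Combining the two equalities gives $\widehat{K}_{\lambda\alpha}=K_{\lambda,\alpha^{r}}=K_{\lambda\alpha}$, and then summing $\widehat{K}_{\lambda\alpha}X^{\alpha}=K_{\lambda\alpha}X^{\alpha}$ over all weak compositions $\alpha$ of $m$ yields $s_{\lambda}(X)=\sum_{\alpha}\widehat{K}_{\lambda\alpha}X^{\alpha}$, as claimed.

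The only point requiring a little care is the bookkeeping around the bound $N$ and the leading/trailing zeros of the type, so that the two sides of $\widehat{K}_{\lambda\beta}=K_{\lambda\beta^{r}}$ genuinely refer to the same combinatorial quantities; this is routine. If one prefers a fully self-contained argument, the appeal to symmetry of $s_\lambda$ can be replaced by chaining the complementation bijection with a sequence of Bender--Knuth switches that transpose adjacent parts of the type, but citing the symmetry of $s_{\lambda}$ is cleaner and entirely in keeping with the level of the surrounding material.
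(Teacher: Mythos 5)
Your argument is correct: the entry-complementation $T_{ij}\mapsto N+1-T_{ij}$ gives a type-reversing bijection between SSYT and RSSYT of shape $\lambda$, and combining it with the symmetry of $s_{\lambda}$ (equivalently, the invariance of $K_{\lambda\alpha}$ under permuting the parts of $\alpha$) yields $\widehat{K}_{\lambda\alpha}=K_{\lambda\alpha}$; your handling of the padding by zeros is the only delicate point and you treat it adequately. The paper itself offers no proof, citing Stanley's Chapter~7, and the proof there is exactly this complementation-plus-symmetry argument, so your proposal coincides with the intended one.
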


Define the \emph{complete homogeneous symmetric functions} $h_{\lambda}$ by the formulas
\begin{align*}
h_k(X) &= \sum_{i_1\leq i_2\leq \cdots \leq i_k}x_{i_1}x_{i_2}\cdots x_{i_k},\ \ k\geq 1\ \ (\text{with } h_0=1) \\
h_{\lambda}(X) &= h_{\lambda_1}(X)h_{\lambda_2}(X)\cdots,\ \ \text{if } \lambda=(\lambda_1,\lambda_2,\ldots).
\end{align*}
Thus $h_k(X)$ is the sum of all monomials of degree $k$.

We consider the specializations of $s_{\lambda}(X)$ and $h_{\lambda}(X)$. The polynomials $s_{\lambda}(x_1, x_2, \ldots, x_n)$ and $h_{\lambda}(x_1, x_2, \ldots, x_n)$ are just the symmetric functions in variables $x_1, x_2, \ldots, x_n$. Moreover, we have
\[
s_{\lambda}(1^n)=s_{\lambda}(x_1, x_2, \ldots, x_n)\big|_{x_1=x_2=\cdots =x_n=1},\ \ \
h_{\lambda}(1^n)=h_{\lambda}(x_1, x_2, \ldots, x_n)\big|_{x_1=x_2=\cdots =x_n=1}.
\]
In fact, by the definition of $h_{\lambda}$,  we have $h_{k}(1^n)=\binom{n+k-1}{k}$.

\begin{lem}{\em \cite[Chapter 7]{RP.Stanley99}}\label{Slam1k}
For any partition $\lambda$ and $n\in \mathbb{P}$, we have
\[
s_{\lambda}(1^n)=\prod_{u\in \lambda}\frac{n+c(u)}{h(u)},
\]
where $c(u)$ and $h(u)$ are defined as above.
\end{lem}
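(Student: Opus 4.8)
The plan is to derive this from the bialternant (Weyl) presentation of Schur polynomials together with a principal specialization, and then to rewrite the answer in hook--content form. Pad $\lambda$ with zero parts so that it has exactly $n$ parts, set $\delta=(n-1,n-2,\dots,1,0)$, and recall the identity $s_{\lambda}(x_1,\dots,x_n)=a_{\lambda+\delta}(x)/a_{\delta}(x)$, where $a_{\mu}(x)=\det_{1\le i,j\le n}(x_i^{\mu_j})$ and $a_{\delta}$ is the Vandermonde determinant; this is a standard consequence of the combinatorial definition of $s_\lambda$ (provable by a sign-reversing involution, or, in the spirit of this section, via Lemma~\ref{LGV-noninter-path}), and I take it as known. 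First I would substitute $x_i=q^{\,i-1}$. Writing $\nu_i:=\lambda_i+n-i$, the matrix for $a_{\lambda+\delta}$ becomes the Vandermonde matrix with nodes $q^{\nu_1},\dots,q^{\nu_n}$ and the one for $a_{\delta}$ the Vandermonde matrix with nodes $q^{\,n-1},\dots,q^{\,0}$, so
\[
s_{\lambda}(1,q,\dots,q^{\,n-1})=\prod_{1\le j<k\le n}\frac{q^{\nu_k}-q^{\nu_j}}{q^{\,n-k}-q^{\,n-j}}=q^{\sum_{i}(i-1)\lambda_i}\prod_{1\le j<k\le n}\frac{1-q^{\nu_j-\nu_k}}{1-q^{\,k-j}},
\]
the second equality coming from pulling the lowest power of $q$ out of each factor (here $\nu_j>\nu_k$ whenever $j<k$). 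Letting $q\to 1$ and using $(1-q^{a})/(1-q^{b})\to a/b$ gives the ``Weyl dimension'' form $s_{\lambda}(1^{n})=\prod_{1\le j<k\le n}(\nu_j-\nu_k)/(k-j)$.

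It remains to match this with $\prod_{u\in\lambda}\frac{n+c(u)}{h(u)}$, which I would do with three elementary identities in the $\nu_i$. (i) Grouping by the value $k-j$ shows $\prod_{1\le j<k\le n}(k-j)=\prod_{i=1}^{n}(n-i)!$. (ii) Running over the cells of row $i$, the numbers $n+c(u)$ are exactly $n-i+1,n-i+2,\dots,\nu_i$, so $\prod_{u\in\lambda}(n+c(u))=\prod_{i=1}^{n}\nu_i!/(n-i)!$ (empty rows contribute $1$, since there $\nu_i=n-i$). (iii) The full product of hook lengths is $\prod_{u\in\lambda}h(u)=\prod_{i=1}^{n}\nu_i!\,\big/\,\prod_{1\le j<k\le n}(\nu_j-\nu_k)$, the determinantal form of the hook length formula (equivalently, the Frobenius formula for the number of standard Young tableaux of shape $\lambda$). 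Dividing (ii) by (iii) and cancelling against (i) gives $\prod_{u\in\lambda}\frac{n+c(u)}{h(u)}=\prod_{1\le j<k\le n}(\nu_j-\nu_k)\big/\prod_{1\le j<k\le n}(k-j)$, which is exactly the product produced by the specialization. The degenerate case $\ell(\lambda)>n$ needs no separate treatment: column $1$ of $\lambda$ then contains a cell of content $-n$, so the hook--content product vanishes, as does $s_{\lambda}(1^{n})$, there being no SSYT of shape $\lambda$ with entries in $\{1,\dots,n\}$.

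The one point that is not routine is (iii). Since padding $\lambda$ with a zero row alters neither $\prod_u h(u)$ nor the right-hand side of (iii), one may assume $n=\ell(\lambda)$, in which case $\nu_1>\dots>\nu_n\ge 0$ are precisely the first-column hook lengths and (iii) follows by a short induction on the number of columns: if $\mu$ denotes $\lambda$ with its first column deleted, then $\mu$ has shifted parts $\nu_i-1$, every other hook length is unchanged, and $\prod_{u\in\lambda}h(u)=\bigl(\prod_{i}\nu_i\bigr)\prod_{u\in\mu}h(u)$. So this classical hook length identity is the only genuine obstacle; everything else is Vandermonde bookkeeping and a harmless $q\to1$ limit. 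An alternative that stays closer to the machinery of this section would be to begin from the Jacobi--Trudi identity $s_{\lambda}=\det(h_{\lambda_i-i+j})$ (the identity behind the second lattice-path model for Theorem~\ref{GeneralCase}, see \cite[Proposition~4.2]{Bressoud}) and specialize via $h_k(1^{n})=\binom{n+k-1}{k}$, obtaining $s_{\lambda}(1^{n})=\det_{1\le i,j\le\ell(\lambda)}\binom{n+\lambda_i-i+j-1}{n-1}$, and then evaluate this binomial determinant by elementary column operations of Vandermonde type; this reproduces Lemma~\ref{Slam1k} by essentially the same computation, phrased through symmetric functions rather than lattice paths.
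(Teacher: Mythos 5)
Your argument is correct, but note that the paper does not actually prove this lemma: it is imported verbatim from \cite[Chapter 7]{RP.Stanley99} as Stanley's hook content formula, so there is no internal proof to compare against. What you have written is essentially the standard textbook derivation (Stanley's Theorem 7.21.2 and its $q\to 1$ corollary): bialternant presentation, principal specialization $x_i=q^{i-1}$ turning both determinants into Vandermondes in the $q^{\nu_j}$, the limit $q\to 1$ yielding the Weyl-dimension form $\prod_{j<k}(\nu_j-\nu_k)/(k-j)$, and then the translation into hook--content form. Your bookkeeping in steps (i) and (ii) checks out, and you are right that the whole weight of the translation rests on the identity $\prod_{u\in\lambda}h(u)=\prod_i\nu_i!\big/\prod_{j<k}(\nu_j-\nu_k)$; your column-deletion induction for it is valid, with the one presentational caveat that the induction hypothesis must be stated for arbitrary $n\ge\ell(\lambda)$ (so that after deleting the first column you may apply it to $\mu$ padded back to $n$ rows, whose shifted parts are then $\nu_i-1$) --- your padding remark covers this, but it is worth making the quantifier explicit. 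The degenerate case $\ell(\lambda)>n$ is handled correctly. Your suggested alternative via the Jacobi--Trudi identity and $h_k(1^n)=\binom{n+k-1}{k}$ is also sound and would sit more naturally alongside the lattice-path machinery of this section, at the cost of trading the Vandermonde computation for an equivalent binomial-determinant evaluation.
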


Note that $s_{\lambda}(1^n)=0$ if $n<l(\lambda)$. By Proposition \ref{RSSYT} and Lemma \ref{Slam1k}, we know that $s_{\lambda}(1^n)$ is the number of RSSYT of shape $\lambda$ when the maximum value is less than $n+1$.

\begin{prop}{\em (The Jacobi-Trudi identity) \cite[Proposition 4.2]{Bressoud}}\label{Jacobi-Trudi}
Let $\lambda=(\lambda_1,\lambda_2,\ldots, \lambda_t)$ be a partition. We have that
\[
s_{\lambda}(x_1,x_2,\ldots,x_n)=\det_{1\leq i,j\leq t}(h_{\lambda_i+j-i}).
\]
\end{prop}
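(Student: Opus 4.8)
The plan is to prove the Jacobi-Trudi identity by the Lindstr\"om-Gessel-Viennot lemma (Lemma \ref{LGV-noninter-path}), in the same spirit as the first proof of Theorem \ref{GeneralCase}, but now with a weight-tracking (Lindstr\"om) version of the path count. The two inputs are: (i) a single weighted monotone lattice path encodes a weakly increasing word over $[n]$, so the weighted count of single paths between suitably chosen endpoints is a complete homogeneous symmetric function; and (ii) a family of such paths is non-intersecting exactly when the associated array of words is column-strict, i.e., an SSYT of shape $\lambda$, whose weight generating function by type is $s_\lambda$ by definition.

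Concretely, I would fix starting points $S_j=(-j,1)$ and ending points $E_i=(\lambda_i-i,n)$ for $1\le i,j\le t$, and consider lattice paths using unit east and north steps, where an east step taken at height $k\in\{1,\ldots,n\}$ is assigned weight $x_k$ and north steps weight $1$; the weight of a path is the product of the weights of its steps. Reading off, in order, the heights of the east steps of a path from $S_j$ to $E_i$ gives a weakly increasing word of length $\lambda_i-i+j$ over $[n]$, and this is a weight-preserving bijection onto all such words, so
\[
\sum_{P:\,S_j\to E_i}\mathrm{wt}(P)=h_{\lambda_i+j-i}(x_1,\ldots,x_n),
\]
with the convention $h_m=0$ for $m<0$ (no paths). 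On the other side, given an SSYT $T$ of shape $\lambda$ with entries in $[n]$, encode its $i$-th row (a weakly increasing word of length $\lambda_i$) as a path $P_i$ from $S_i$ to $E_i$; a short check of the shift $S_i=(-i,1)$ shows that the column-strict condition $T_{i,j}<T_{i+1,j}$ is exactly the condition that $P_i$ and $P_{i+1}$ are vertex-disjoint. This yields a weight-preserving bijection between SSYT of shape $\lambda$ and non-intersecting families $(P_1,\ldots,P_t)$ with $P_i:S_i\to E_i$, so by the definition of the Schur function the total weight of such families is $s_\lambda(x_1,\ldots,x_n)$.

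To conclude, I would check the crossing hypothesis of Lemma \ref{LGV-noninter-path}: if $i<j$ then $S_i$ lies strictly right of $S_j$, and if $k<\ell$ then, since $\lambda$ is a partition, $\lambda_k-k>\lambda_\ell-\ell$, so $E_k$ lies strictly right of $E_\ell$; hence any path $S_i\to E_\ell$ (which starts right of and ends left of) must cross any path $S_j\to E_k$. The weighted form of the lemma (Lindstr\"om's lemma) then gives
\[
\det_{1\le i,j\le t}\!\bigl(h_{\lambda_i+j-i}\bigr)=\sum_{(P_1,\ldots,P_t)}\prod_{i=1}^{t}\mathrm{wt}(P_i)=s_\lambda(x_1,\ldots,x_n),
\]
the middle sum over non-intersecting families with $P_i:S_i\to E_i$, which is the claimed identity.

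The main obstacle — essentially the only nontrivial point — is calibrating the coordinates and the step-weight convention so that simultaneously (a) the single-path weighted counts land precisely on $h_{\lambda_i+j-i}$ with the indexing matching $\det_{1\le i,j\le t}(h_{\lambda_i+j-i})$, and (b) the translation ``column-strict $\Leftrightarrow$ vertex-disjoint'' is literally correct for the chosen shift $S_i=(-i,1)$; a half-unit error in the shift would turn strictness into weakness, producing $e$'s rather than $h$'s or the dual Jacobi-Trudi identity. One should also recall that the weighted LGV lemma is the routine refinement of Lemma \ref{LGV-noninter-path}: the crossing hypothesis ensures that every path system inducing a non-identity permutation is cancelled, via the standard tail-swapping involution, against a system of opposite sign, so that only the non-intersecting systems survive, and these necessarily match $S_i$ to $E_i$. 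Everything else is bookkeeping.
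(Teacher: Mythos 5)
The paper does not actually prove this proposition---it is quoted from Bressoud, whose proof (as the paper itself notes just after the statement) is precisely the Lindstr\"om--Gessel--Viennot argument you give, and your version is correct: the endpoints $S_j=(-j,1)$, $E_i=(\lambda_i-i,n)$ make the weighted single-path count equal $h_{\lambda_i+j-i}$ with the indexing of Lemma~\ref{LGV-noninter-path}, the column-strictness $T_{i,j}<T_{i+1,j}$ does translate into vertex-disjointness of $P_i$ and $P_{i+1}$ on each shared vertical line, and the crossing hypothesis holds since $\lambda_k-k$ is strictly decreasing. Your one necessary addition---that the paper's unweighted Lemma~\ref{LGV-noninter-path} must be upgraded to its weighted (Lindstr\"om) form---is rightly flagged and adequately justified by the tail-swapping involution.
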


\begin{cor}
The number of all reverse semistandard Young tableaux (RSSYT) of shape $\lambda=(\lambda_1,\lambda_2,\ldots, \lambda_t)$ with entries chosen from $\{1,2,\ldots,n\}$ is
\[
s_{\lambda}(1^n)=\prod_{u\in \lambda}\frac{n+c(u)}{h(u)}=\det_{1\leq i,j\leq t}\left(\binom{n+\lambda_i+j-i-1}{n-1} \right).
\]
\end{cor}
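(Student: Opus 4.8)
The plan is to read this off as an essentially immediate consequence of the machinery already assembled in this subsection. The combinatorial interpretation and the first equality $s_{\lambda}(1^n)=\prod_{u\in\lambda}\frac{n+c(u)}{h(u)}$ are literally the content of Proposition \ref{RSSYT} together with Lemma \ref{Slam1k} (as noted in the paragraph immediately preceding the corollary, $s_{\lambda}(1^n)$ counts the RSSYT of shape $\lambda$ whose entries are at most $n$). So the only genuinely new task is to produce the determinant on the right, and for that I would specialize the Jacobi-Trudi identity.

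Concretely, I would start from Proposition \ref{Jacobi-Trudi}, $s_{\lambda}(x_1,\dots,x_n)=\det_{1\le i,j\le t}(h_{\lambda_i+j-i})$, and set $x_1=\cdots=x_n=1$. Since this is a polynomial identity in the variables $x_\ell$, the specialization may be carried out entrywise, giving
\[
s_{\lambda}(1^n)=\det_{1\le i,j\le t}\bigl(h_{\lambda_i+j-i}(1^n)\bigr).
\]
Next I would insert the evaluation $h_k(1^n)=\binom{n+k-1}{k}$ recorded just before Lemma \ref{Slam1k}, so that the $(i,j)$ entry becomes $\binom{n+\lambda_i+j-i-1}{\lambda_i+j-i}$. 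Finally, applying the binomial symmetry $\binom{m}{k}=\binom{m}{m-k}$ with $m=n+\lambda_i+j-i-1$ and $k=\lambda_i+j-i$, so that $m-k=n-1$, rewrites the entry as $\binom{n+\lambda_i+j-i-1}{n-1}$, which is exactly the matrix in the statement.

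I do not expect a real obstacle here; the one point needing a line of care is the treatment of the entries with $\lambda_i+j-i\le 0$. When $\lambda_i+j-i=0$ one has $h_0=1=\binom{n-1}{0}$, and when $\lambda_i+j-i<0$ the convention $h_k=0$ for $k<0$ must be matched against $\binom{n+\lambda_i+j-i-1}{n-1}$, which vanishes because its top index is strictly smaller than its bottom index $n-1$; this keeps the two determinants equal termwise. I would also remark in passing that the degenerate range $n<\ell(\lambda)$, in which $s_{\lambda}(1^n)=0$ (already flagged after Lemma \ref{Slam1k}), is automatically consistent on the RSSYT side and needs no separate discussion.
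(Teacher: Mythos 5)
Your proposal is correct and follows exactly the route the paper takes: the paper's proof is the one-line statement that the corollary follows from Proposition \ref{RSSYT}, Lemma \ref{Slam1k}, and Proposition \ref{Jacobi-Trudi}, and your argument simply spells out those same three steps (specializing Jacobi--Trudi at $x_1=\cdots=x_n=1$, inserting $h_k(1^n)=\binom{n+k-1}{k}$, and applying the binomial symmetry), with a welcome extra remark on the entries with $\lambda_i+j-i\le 0$.
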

\begin{proof}
The corollary follows from Proposition \ref{RSSYT}, Lemma \ref{Slam1k}, and Proposition \ref{Jacobi-Trudi}.
\end{proof}

In \cite[Proposition 4.2]{Bressoud}, the proof of the Jacobi-Trudi identity was completed using the Lindstr\"om-Gessel-Viennot lemma. We observed that RSSYT requires weakly decreasing in every row and strictly decreasing in every column, with entries chosen from $\{1,2,\ldots,n\}$. However, the plane partition requires weakly decreasing in every row, weakly decreasing in every column, and entries chosen from $\{0,1,2,\ldots,n\}$. Inspired by the proof of the Jacobi-Trudi identity \cite[Proposition 4.2]{Bressoud}, we provide the second proof of Theorem \ref{GeneralCase}.

\begin{proof}[The second proof of Theorem \ref{GeneralCase}]
By performing elementary transformations on the determinant to the right of \eqref{GeneralFormula}, we need to prove
\begin{align}\label{GeneralFormuSecond}
\mathrm{ehr}(\mathcal{O}(P_{\lambda}),n)=\det_{1\leq i,j\leq t}\left(\binom{n+\lambda_i}{n+i-j}\right).
\end{align}

Define a \emph{reverse plane partition} (RPP) of shape $\lambda$ is an array $T=(T_{ij})$ of positive integers of shape $\lambda$ that is weakly increasing in every row and weakly increasing in every column.
Let $\mathcal{T}$ be the set of all reverse plane partitions of shape $\lambda=(\lambda_1,\lambda_2,\ldots, \lambda_t)$ with entries chosen from $\{1,2,\ldots,n+1\}$.

For $\lambda=(\lambda_1,\lambda_2,\ldots,\lambda_t)$, the Ehrhart polynomial of the order polytope $\mathcal{O}(P_{\lambda})$ counts the number of plane partitions such that the $i$-th row has at most $\lambda_i$ parts, and the largest part is less than or equal to $n$. The transformation $a_{ij}\mapsto n+1-T_{ij}$ shows that $\mathcal{O}(P_{\lambda})$ is equivalent to counting the number of the elements in set $\mathcal{T}$.

Now the left hand side of \eqref{GeneralFormuSecond} equals $\#\mathcal{T}$. The right hand side \eqref{GeneralFormuSecond} equals $\#\mathcal{D}$, where $\mathcal{D}$ is a set to be constructed as follows.
Then we will establish a bijection from $\mathcal{T}$ to $\mathcal{D}$ to complete the proof of the theorem.

Let $S_j=(j,t-j), 1\leq j\leq t$ and $E_i=(n+i,\lambda_i+t-i), 1\leq i\leq t$ be the starting and end vertices, respectively. Consider $t$ tuples of lattice paths from the $S_j$'s to the $E_i$'s that always move one unit east or north. It is easy to show that the $t$ tuple of lattice paths satisfies the condition of Lemma \ref{LGV-noninter-path}. Apply Lemma \ref{LGV-noninter-path} with the easy fact $|P(S_j\rightarrow E_i)|=\binom{n+\lambda_{i}}{n+i-j}$. Then we shall construct $\mathcal{D}$ as the set of $t$ tuples of non-intersecting lattice paths from $S_i$ to $E_i$ for all $i$.

Given a reverse plane partition $T=(T_{ij})$ in $\mathcal{T}$. It is of the form
\begin{align*}
\begin{matrix}
&T_{1,1}\ \ & T_{1,2}\ \ &\cdots\ \ & T_{1,\lambda_1-1}\ \ & T_{1,\lambda_1}
\\& T_{2,1}\ \ & T_{2,2}\ \ & \cdots\ \ & T_{2,\lambda_2}\ \ &
\\ T=\ \ &\cdots\ & &  & &
\\& T_{(t-1),1}\ \ & \cdots & T_{(t-1),\lambda_{t-1}} & &
\\& T_{t,1} & \cdots &  T_{t,\lambda_t} & &.
\end{matrix}
\end{align*}
Note that $T_{ij}\in \{1,2,\ldots,n+1\}$. Observe that each row of $T$ is an ordinary reverse partition (not allowing zero entries). Denote the $i$-th ordinary reverse partition as $\omega_{i}=(T_{i,1},T_{i,2},\ldots,T_{i,\lambda_i})$ from top to bottom, $1\leq i\leq t$.

There is a natural way of representing each tableaux in $\mathcal{T}$ as a $t$ tuple of lattice paths.
Let $T_{ij}$ be the entry in row $i$, column $j$ of the array $T$. The $i$-th lattice path (counted from the top) encodes $\omega_i$. It goes from $S_i=(i,t-i)$ to $E_i=(n+i,\lambda_i+t-i)$ by walking north and east.
It makes $\lambda_i$ steps to the north, and the $j$-th step to the north is made along the line $x=T_{ij}+i-1$.
As an example, suppose $n=8$ and $\lambda=(6,4,4,2,2,1)$, the reverse plane partition
\begin{align*}
\begin{matrix}
&1\ \ & 1\ \ & 2\ \ & 3\ \ &4\ \ &8\ \
\\ &1\ \ & 2\ \ & 2\ \ & 5\ \ &\ \  &\ \
\\ T=&2\ \ & 2\ \ & 5\ \ & 9\ \ &\ \  &\ \
\\ & 4\ \ & 6 \ \ & \ \ & \ \ &\ \  &\ \
\\ & 5\ \ & 6 \ \ & \ \ & \ \ &\ \  &\ \
\\& 7\ \ &  \ \ &  \ \ &  \ \ &\ \   &\ \ .
\end{matrix}
\end{align*}
corresponds to the tuple of lattice paths given in Figure \ref{OP13}.
\begin{figure}[htp]
\centering
\includegraphics[width=0.55\linewidth]{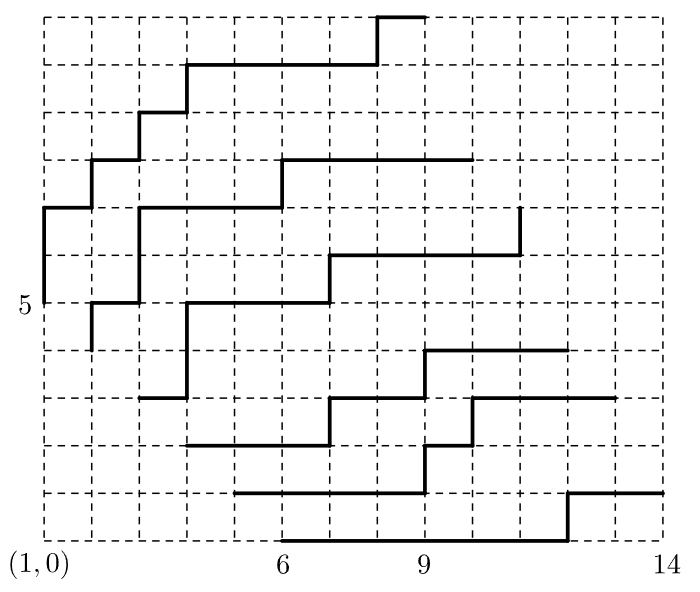}
\caption{The tuple of non-intersecting lattice paths.}
\label{OP13}
\end{figure}

The condition that each column of the reverse plane partition is weakly increasing is equivalent to the statement that the paths are non-intersecting. Therefore, we established a one-to-one correspondence between the set $\mathcal{T}$ and the set $\mathcal{D}$. This completes the proof.
\end{proof}

\subsection{The Case: $\lambda=(k, k-1,\ldots ,1)$.}

\begin{cor}{\em \cite[Exercise 7.101(b)]{RP.Stanley99}}\label{General-Deter}
Let $\lambda=(k-d, k-2d,\ldots ,k-td)$, $k\geq 1$. The Ehrhart polynomial of $\mathcal{O}(P_{\lambda})$ is given by
\begin{small}
\begin{align}
\mathrm{ehr}(\mathcal{O}(P_{\lambda}),n)=\det_{1\leq i,j\leq t}\left(\binom{n+k-id}{n+i-j} \right)=\prod_{u=(i,j)\in \lambda\atop t+c(u)\leq \lambda_i}\frac{n+t+c(u)}{t+c(u)}\cdot \prod_{u=(i,j)\in \lambda \atop t+c(u)>\lambda_i}\frac{(d+1)n+t+c(u)}{t+c(u)},
\end{align}
\end{small}
where $c(u)$ denotes the content of the square $u$.
\end{cor}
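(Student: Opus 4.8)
The first equality requires nothing new: it is Theorem~\ref{GeneralCase}. Substituting $\lambda_i=k-id$ for $1\le i\le t$ into \eqref{GeneralFormuSecond} gives $\mathrm{ehr}(\mathcal{O}(P_\lambda),n)=\det_{1\le i,j\le t}\binom{n+k-id}{n+i-j}$ at once. So the entire content of the corollary is the explicit evaluation of this binomial determinant as the displayed two-fold product, and the plan is to read that evaluation off from the lattice-path model built in the second proof of Theorem~\ref{GeneralCase} rather than to manipulate the determinant blindly.

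Recall from that proof that $\mathrm{ehr}(\mathcal{O}(P_\lambda),n)$ counts plane partitions of shape $\lambda$ with parts $\le n$; after the complementation used there and an affine shift that makes the columns strict, these biject with semistandard tableaux of shape $\lambda$ carrying an \emph{arithmetic} row-flag, equivalently with the all-ones specialization of a flagged Schur function of the arithmetic shape $\lambda=(k-d,k-2d,\dots,k-td)$. In the non-intersecting lattice path model $\mathcal{D}$ this is most transparent: the sources $S_j=(j,t-j)$ lie on the line $x+y=t$, whereas the sinks $E_j=(n+j,\,k+t-j(d+1))$ lie on a line of slope $-(d+1)$, so the families counted by Lemma~\ref{LGV-noninter-path} are exactly the non-intersecting path families filling a trapezoidal region bounded by two staircases of different slopes. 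The plan is to count these families directly.

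Concretely I would argue by induction on $t$, peeling off the top path $S_1\to E_1$ --- equivalently, deleting the top row of $\lambda$, which replaces the problem by the same one with $(k,t)$ replaced by $(k-d,\,t-1)$. The positions available to the peeled path are governed by the next path up, and their number is a single product over the cells $u=(i,j)$ of the deleted row, split according to the sign of $t+c(u)-\lambda_i$: the cells with $t+c(u)\le\lambda_i$ each contribute $\frac{n+t+c(u)}{t+c(u)}$, while the cells of the overhang, $t+c(u)>\lambda_i$, pick up the factor $\frac{(d+1)n+t+c(u)}{t+c(u)}$, the coefficient $d+1$ being precisely the slope of the line of sinks. (When $d=0$ the two factors coincide and the product is MacMahon's box formula for a $t\times k\times n$ box.) Closing the induction yields the asserted product. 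Alternative routes are to reduce $\det_{1\le i,j\le t}\binom{n+k-id}{n+i-j}$ by the usual row and column operations on binomial entries to a Vandermonde-type product, in the style of \cite{Krattenthaler01,Krattenthaler05}, or simply to invoke \cite[Exercise~7.101(b)]{RP.Stanley99}, which is the original source of this evaluation; in cases where the row-flag trivializes one may also apply the hook--content formula (Lemma~\ref{Slam1k}) directly.

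The main obstacle is exactly this evaluation, and specifically the bookkeeping that partitions the cells of $\lambda$ into the part with $t+c(u)\le\lambda_i$ and the overhang so that a single determinant collapses to a product of two pieces with $d+1$ attached only to the second; getting the boundary between the two regions right --- and handling the degenerate cases $d=0$ and $k-td=0$ --- is the delicate point. Everything upstream of it is immediate from Theorem~\ref{GeneralCase} and the path model of its second proof.
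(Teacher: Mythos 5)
Your handling of the first equality is exactly what the paper does: substitute $\lambda_i=k-id$ into \eqref{GeneralFormuSecond}. For the second equality the paper's entire proof is the sentence ``By \cite[Exercise 7.101(b)]{RP.Stanley99}, we get the second $=$'' --- i.e.\ precisely the fallback you list at the end of your third paragraph. So your proposal does contain the paper's proof, and to that extent it is correct and takes the same route.

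That said, the route you actually lead with --- induction on $t$ by peeling off the top path $S_1\to E_1$ --- is not a proof as written, and I would not let it stand as the primary argument. The key assertion, that the number of positions available to the peeled path is ``a single product over the cells of the deleted row'' splitting according to the sign of $t+c(u)-\lambda_i$, is exactly the content of the evaluation and is stated without justification. In a non-intersecting family the count of admissible outermost paths depends on the configuration of the inner paths, so the total count does not factor row by row for free; making this work requires either a genuine telescoping identity for the determinant (reducing $\det_{1\le i,j\le t}\binom{n+k-id}{n+i-j}$ to the $(t-1)\times(t-1)$ case times an explicit row factor, in the style of the condensation/identification arguments in \cite{Krattenthaler01}) or a different argument entirely. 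Since the corollary is explicitly attributed to Stanley's exercise, citing it is the intended proof; if you want a self-contained one, you need to supply the determinant reduction rather than the heuristic path-peeling.
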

\begin{proof}
By \eqref{GeneralFormuSecond}, we obtain the first ``$=$". By \cite[Exercise 7.101(b)]{RP.Stanley99}, we get the second ``$=$".
\end{proof}

\begin{cor}\label{CnjOnekk-1}
Let $\lambda=(k, k-1,\ldots ,1)$, $k\geq 1$. Suppose $\det_{0\leq i,j\leq -1}(C_{i+j+k+1})=1$. The Ehrhart polynomial of $\mathcal{O}(P_{\lambda})$ is given by
\begin{align}\label{CnjOnekFoumu}
\mathrm{ehr}(\mathcal{O}(P_{\lambda}),n)=\det_{0\leq i,j\leq n-1}(C_{i+j+k+1})
\end{align}
where $C_n$ is the well-known Catalan number (\cite[A000108]{Sloane23}).
\end{cor}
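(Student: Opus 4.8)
The plan is to prove this exactly as in the two proofs of Theorem~\ref{GeneralCase}: realize both sides as counts of one family of non-intersecting lattice paths, so that a single application of Lemma~\ref{LGV-noninter-path} closes the argument. Write $\delta_k=(k,k-1,\dots,1)$ for the staircase, so $\lambda=\delta_k$ and $\lambda_i=k-i+1$. By the first proof of Theorem~\ref{GeneralCase}, $\mathrm{ehr}(\mathcal{O}(P_\lambda),n)$ equals the number of plane partitions of shape contained in $\delta_k$ with entries in $\{0,1,\dots,n\}$. I would first slice such a plane partition $P$ by its level sets $\mu^{(\ell)}=\{(a,b):P_{a,b}\ge \ell\}$ for $1\le \ell\le n$: the plane partition axioms are equivalent to $\mu^{(1)}\supseteq\mu^{(2)}\supseteq\cdots\supseteq\mu^{(n)}$ being a weakly decreasing chain of Young diagrams, each contained in $\delta_k$. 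Thus $\mathrm{ehr}(\mathcal{O}(P_\lambda),n)$ counts such chains, and for $n=0$ this is the empty chain, matching $\det_{0\le i,j\le -1}(C_{i+j+k+1})=1$.

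Next I would encode each $\mu\subseteq\delta_k$ by its lower boundary lattice path; after a shearing change of coordinates the staircase boundary of $\delta_k$ becomes a straight line and these become Dyck-type (ballot) paths, which re-proves that there are exactly $C_{k+1}$ of them (the $(0,0)$-entry of the target determinant). A chain of $n$ nested diagrams corresponds to $n$ mutually non-crossing such paths. Translating the $\ell$-th path by a vector depending linearly on $\ell$ — moving its source $A_\ell$ left by $2\ell$ and its sink $B_\ell$ right by $2\ell$ — turns ``non-crossing'' into ``vertex-disjoint'' and arranges $A_0,\dots,A_{n-1}$ and $B_0,\dots,B_{n-1}$ into the configuration required by Lemma~\ref{LGV-noninter-path}. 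A single path from $A_i$ to $B_j$ is then an unconstrained ballot path whose fixed length forces, via the reflection principle, that $|P(A_j\to E_i)|=C_{i+j+k+1}$ (the diagonal value $C_{2i+k+1}$ reducing at $i=0$ to $C_{k+1}$, consistent with the previous step). Lemma~\ref{LGV-noninter-path} then yields $\det_{0\le i,j\le n-1}(C_{i+j+k+1})$, which is \eqref{CnjOnekFoumu}.

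The main obstacle is the coordinate bookkeeping in the middle step: choosing $A_\ell$ and $B_\ell$ so that simultaneously (i) the shifted paths are non-intersecting \emph{precisely} when the underlying diagrams form a chain — i.e., the weakly-decreasing-along-columns condition on $P$ is translated faithfully — and (ii) the reflection-principle count of $A_i\to B_j$ paths is the Catalan number with the exact shift $i+j+k+1$ and not an off-by-one variant. Getting the staircase boundary of $\delta_k$ to linearize correctly under the shear, and checking the boundedness of each path is automatic once the family is non-intersecting, is the technical heart of the proof.

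A less conceptual alternative would bypass the bijection entirely: Corollary~\ref{General-Deter} applies to $\lambda=(k,k-1,\dots,1)$ with slope $d=1$ (taking its parameter to be $k+1$ and $t=k$), giving a product formula for $\mathrm{ehr}(\mathcal{O}(P_\lambda),n)$; one then compares this with the known evaluation $\det_{0\le i,j\le n-1}(C_{i+j+k+1})=\prod_{1\le a\le b\le k}\frac{2n+a+b}{a+b}$ and verifies that the two products coincide. This replaces the combinatorial argument by a routine but somewhat tedious manipulation of products over the cells of $\delta_k$.
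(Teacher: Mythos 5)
Your main route is correct in outline but genuinely different from the paper's. The paper constructs no new path family for this corollary: it quotes Cigler's Theorem~4 to convert the $n\times n$ Catalan Hankel determinant $\det_{0\leq i,j\leq n-1}(C_{i+j+k+1})$ into the $(k+1)\times(k+1)$ binomial determinant $\det_{0\leq i,j\leq k}\binom{n+i+j}{n+i-j}$, and then matches that against the determinant already supplied by \eqref{GeneralFormuSecond} through a single column reduction based on $\binom{n+m+1}{m+1}-\binom{n+m}{m+1}=\binom{n+m}{m}$. You instead propose a self-contained bijective proof: slice the plane partition into a nested chain of $n$ subdiagrams of the staircase, encode each as a nonnegative Dyck-type path, push the $\ell$-th path outward by $2\ell$ at each end, and apply Lemma~\ref{LGV-noninter-path} directly with Catalan entries. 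This is the classical ``watermelon''/vicious-walker interpretation of Catalan Hankel determinants (compare the Guttmann--Owczarek--Viennot reference \cite{Guttmann98} already in the bibliography), and your index bookkeeping is consistent where you check it: the $(0,0)$ entry $C_{k+1}$ is indeed the number of subdiagrams of the staircase, the $\pm 2\ell$ shifts produce $C_{i+j+k+1}$ by the reflection principle, and the $n=0$ convention matches the empty determinant. Your route buys independence from Cigler's external identity and a direct combinatorial explanation of why Catalan numbers appear; the paper's route buys brevity by outsourcing the Hankel evaluation. Do note that your argument is still a plan: the equivalence between nested chains and non-intersecting shifted families, and the reflection-principle count itself, are asserted rather than executed, and these are exactly where an off-by-one in the Catalan index would hide, so the proof is not complete until that bookkeeping is written out. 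Your fallback (comparing the product formula of Corollary~\ref{General-Deter} with the known product evaluation of the Hankel determinant) is essentially the paper's strategy recast in product rather than determinant form.
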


The right hand side of Equation \eqref{CnjOnekFoumu} is a special shifted Hankel determinant. To prove the corollary, it is sufficient to show, by the results of \cite[Theorem 4]{Cigler19}, that
\begin{align*}
\mathrm{ehr}(\mathcal{O}(P_{\lambda}),n)=\det_{0\leq i,j\leq k}\left(\binom{n+i+j}{n+i-j}\right)
=\prod_{t=1}^k(n+t)\prod_{s=1}^{k-1}\frac{s!}{(2s+1)!}\prod_{j=0}^{k-2}\prod_{i=3+2j}^{k+j+1}(2n+i).
\end{align*}
The Ehrhart polynomial of the order polytope $\mathcal{O}(P_{(k, k-1,\ldots ,1)})$ counts the number of plane partitions that the $i$-th row has at most $k-i+1$ parts, and the largest part is less than or equal to $n$.
For $1\leq k\leq 5$, $\mathrm{ehr}(\mathcal{O}(P_{\lambda}),n)$ corresponds to sequences [A000027], [A000330], [A006858], [A091962], and [A335857] on the OEIS, respectively.

\begin{proof}[Proof of Corollary \ref{CnjOnekk-1}]
By  the results of \cite[Theorem 4]{Cigler19}, we only need to prove
\begin{align*}
\mathrm{ehr}(\mathcal{O}(P_{\lambda}),n)=\det_{0\leq i,j\leq k}\left(\binom{n+i+j}{n+i-j}\right)=\det_{1\leq i,j\leq k+1}\left(\binom{n+i+j-2}{2j-2}\right).
\end{align*}
By \eqref{GeneralFormuSecond}, we have
\begin{align*}
\mathrm{ehr}(\mathcal{O}(P_{\lambda}),n)=\det_{1\leq i,j\leq k}\left(\binom{n+i+j-1}{2j-1}\right).
\end{align*}
We noticed that the first column of $\det_{1\leq i,j\leq k+1}\left(\binom{n+i+j-2}{2j-2}\right)$ is all $1$. By $\binom{n+m+1}{m+1}-\binom{n+m}{m+1}=\binom{m+n}{m}$ and performing elementary transformation on the determinant, we obtain
\[
\det_{1\leq i,j\leq k+1}\left(\binom{n+i+j-2}{2j-2}\right)=\det_{1\leq i,j\leq k}\left(\binom{n+i+j-1}{2j-1}\right).
\]
This completes the proof.
\end{proof}

\subsection{The Case: $\lambda=(k, k,\ldots ,k)=(k^t)$.}
When $\lambda=(k^t)$, the Ferrers poset $P_{\lambda}$ is the direct product of two chains, i.e., $I_{k} \times I_{\ell(\lambda)}$. It is also referred to as the \emph{box poset} in some literature.

By \eqref{GeneralFormuSecond}, we obtain the following result. This classic result also appears in \cite[Theorem 3.6]{Bressoud}.
\begin{cor}{\em \cite[Theorem 3.6]{Bressoud}}
Let $k\geq 1$. Let $\lambda=(k^t)$.
The Ehrhart polynomial of $\mathcal{O}(P_{\lambda})$ is given by
\begin{align*}
\mathrm{ehr}(\mathcal{O}(P_{\lambda}),n)=\det_{1\leq i,j\leq t}\left(\binom{n+k}{n+i-j} \right).
\end{align*}
\end{cor}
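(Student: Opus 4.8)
The plan is to read this off as an immediate specialization of Theorem~\ref{GeneralCase}. The identity \eqref{GeneralFormuSecond} obtained in the second proof of that theorem asserts that for an arbitrary partition $\lambda=(\lambda_1,\ldots,\lambda_t)$ one has
\[
\mathrm{ehr}(\mathcal{O}(P_{\lambda}),n)=\det_{1\leq i,j\leq t}\left(\binom{n+\lambda_i}{n+i-j}\right).
\]
Since $\lambda=(k^t)$ is a genuine partition of $kt$ with $\lambda_1=\lambda_2=\cdots=\lambda_t=k$, substituting $\lambda_i=k$ makes the $(i,j)$ entry equal to $\binom{n+k}{n+i-j}$, which is exactly the matrix in the statement. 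Hence the proof amounts to the single substitution $\lambda_i\mapsto k$ in \eqref{GeneralFormuSecond}.

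For completeness I would cross-check against the other two forms that appear earlier. From the first equality of \eqref{GeneralFormula}, namely $\mathrm{ehr}(\mathcal{O}(P_{\lambda}),n)=\det_{1\leq i,j\leq t}\binom{n+\lambda_{t-j+1}}{n+i-j}$, the constancy $\lambda_{t-j+1}=k$ again produces the same matrix. Alternatively, since $P_{(k^t)}$ is the box poset $I_k\times I_t$, the Ehrhart polynomial counts reverse plane partitions of rectangular shape $(k^t)$ with entries at most $n+1$ (via the transformation $a_{ij}\mapsto n+1-T_{ij}$ used in the second proof of Theorem~\ref{GeneralCase}); feeding $\lambda_i=k$ into the non-intersecting lattice path model there, with starting points $S_j=(j,t-j)$ and endpoints $E_i=(n+i,k+t-i)$, and applying Lemma~\ref{LGV-noninter-path} with $|P(S_j\rightarrow E_i)|=\binom{n+k}{n+i-j}$, yields the determinant directly.

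I do not anticipate any genuine obstacle: the corollary is a one-line consequence of the general determinant formula, and the only thing requiring care is the bookkeeping, namely verifying that $(k^t)$ satisfies the hypothesis ``$\lambda$ is a partition'' (it clearly does) and that the index substitution is carried out consistently between rows and columns. Accordingly I would write the proof as: apply \eqref{GeneralFormuSecond} with $\lambda_1=\cdots=\lambda_t=k$.
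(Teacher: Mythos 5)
Your proposal is correct and matches the paper exactly: the paper derives this corollary by the single substitution $\lambda_1=\cdots=\lambda_t=k$ in the determinant formula \eqref{GeneralFormuSecond}, precisely as you describe. (A trivial bookkeeping note: what you call ``the first equality of \eqref{GeneralFormula}'' is actually its second determinant expression, $\det\bigl(\binom{n+\lambda_{t-j+1}}{n+i-j}\bigr)$, but this does not affect the argument.)
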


In this case, there is a simple connection between plane partitions and SSYT, so Stanley's hook-content formula
\cite[Chapter 7]{RP.Stanley99} applies to give the following result.
\begin{thm}\label{hhsnthmschu}
Let $k\geq 1$. Let $\lambda=(k,k,\ldots,k)$.
The Ehrhart polynomial of $\mathcal{O}(P_{\lambda})$ is given by
\[
\mathrm{ehr}(\mathcal{O}(P_{\lambda}),n)=s_{\lambda}(1^{n+\ell(\lambda)})=\prod_{u \in \lambda}\frac{n+\ell(\lambda)+c(u)}{h(u)},
\]
where $\ell(\lambda)$ is the length of $\lambda$, $h(u)$ and $c(u)$ are the hook length and the content of $u \in \lambda$, respectively.
\end{thm}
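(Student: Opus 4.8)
The plan is to bypass the determinant entirely and instead give a bijective reduction to the classical specialization $s_\lambda(1^m)$, after which Lemma~\ref{Slam1k} finishes the job. Recall from the second proof of Theorem~\ref{GeneralCase} that $\mathrm{ehr}(\mathcal{O}(P_\lambda),n)=\#\mathcal{T}$, where $\mathcal{T}$ is the set of reverse plane partitions of shape $\lambda$ with entries in $\{1,2,\ldots,n+1\}$; this identification holds for every $\lambda$, and in particular for $\lambda=(k^t)$. So it suffices to show $\#\mathcal{T}=s_\lambda(1^{n+\ell(\lambda)})$, i.e. that the reverse plane partitions of the rectangle $(k^t)$ with entries at most $n+1$ are equinumerous with the semistandard Young tableaux of shape $(k^t)$ with entries at most $n+t$.

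First I would introduce the shift map $\phi$ sending an RPP $T=(T_{ij})$ to $\phi(T)=(T_{ij}+i-1)$. Since adding a constant along each row preserves weak increase in rows, and since $T_{ij}+i-1<T_{i+1,j}+i$ whenever $T_{ij}\le T_{i+1,j}$, the array $\phi(T)$ is weakly increasing in rows and strictly increasing in columns, hence an SSYT of shape $\lambda$. The map $\phi$ is clearly injective, with candidate inverse subtracting $i-1$ from row $i$; one checks that this subtraction turns column-strictness back into weak column increase. The one point that needs the rectangular hypothesis is the matching of entry bounds: in an SSYT of shape $(k^t)$ every column has the full length $t$, so an entry sitting in row $i$ has $t-i$ strictly larger entries below it in its column; if the largest entry is at most $n+t$, that entry is therefore at most $n+i$, and subtracting $i-1$ brings it down to at most $n+1$. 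Conversely the largest entry of $\phi(T)$ is at most $(n+1)+(t-1)=n+t$. Hence $\phi$ is a bijection from $\mathcal{T}$ onto the set of SSYT of shape $\lambda$ with entries in $\{1,\ldots,n+t\}$, and $\#\mathcal{T}=s_\lambda(1^{n+t})=s_\lambda(1^{n+\ell(\lambda)})$ by the tableau description of this specialization (the definition of $s_\lambda$ via Kostka numbers, together with $\ell(\lambda)=t$).

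Finally, applying Lemma~\ref{Slam1k} with the integer $n$ there replaced by $n+\ell(\lambda)$ gives
\[
\mathrm{ehr}(\mathcal{O}(P_\lambda),n)=s_\lambda(1^{n+\ell(\lambda)})=\prod_{u\in\lambda}\frac{n+\ell(\lambda)+c(u)}{h(u)},
\]
which is exactly the asserted formula. I expect the only genuinely delicate step to be the entry-bound bookkeeping in the bijection, where the rectangular shape is essential: for a non-rectangular $\lambda$ the shift $\phi$ still lands in SSYT, but the bound $n+t$ would have to be replaced by a more complicated row-dependent condition and the clean count $s_\lambda(1^{n+\ell(\lambda)})$ would fail. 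Everything else is routine: the reduction to $\mathcal{T}$ is already in hand from Theorem~\ref{GeneralCase}, and the hook-content evaluation is quoted from Lemma~\ref{Slam1k}.
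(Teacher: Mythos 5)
Your proof is correct and follows essentially the same route as the paper's: a row-dependent constant shift converts the bounded (reverse) plane partitions counted by $\mathrm{ehr}(\mathcal{O}(P_\lambda),n)$ into semistandard-type tableaux with entries at most $n+\ell(\lambda)$, and Lemma~\ref{Slam1k} then yields the hook-content product. The only differences are cosmetic (you work with RPPs and SSYT where the paper shifts directly to RSSYT), and your write-up is in fact more careful than the paper's in checking the inverse map and in identifying exactly where the rectangular hypothesis enters the entry-bound bookkeeping.
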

\begin{proof}
For a given poset $P_{\lambda}$, by Definition \ref{Definitorderp}, the Ehrhart polynomial $\mathrm{ehr}(\mathcal{O}(P_{\lambda}),n)$ counts the number of labels that satisfy the conditions for a poset.
For a given label, we add $\ell(\lambda)$ to the labels in the first row, $\ell(\lambda)-1$ to the labels in the second row, and so on. Thus the value of the last row should be added by $1$. At this point, the labeled poset is an RSSYT of shape $\lambda$. The $s_{\lambda}(1^{n+\ell(\lambda)})$ is the number of RSSYT of shape $\lambda$ when the maximum value is less than $n+\ell(\lambda)+1$. We get $\mathrm{ehr}(\mathcal{O}(P_{\lambda}),n)=s_{\lambda}(1^{n+\ell(\lambda)})$. By Lemma \ref{Slam1k}, this completes the proof.
\end{proof}

\begin{cor}[MacMahon formula, \cite{MathStackExc,Guttmann98}]\label{MacMahonPKT}
Let $k\geq 1$. Let $\lambda=(k^t)$. The Ehrhart polynomial of $\mathcal{O}(P_{\lambda})$ is given by
\[
\mathrm{ehr}(\mathcal{O}(P_{\lambda}),n)=\prod_{i=1}^t\prod_{j=1}^k\frac{i+j+n-1}{i+j-1}.
\]
\end{cor}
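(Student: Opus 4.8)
The plan is to obtain the MacMahon formula as a direct specialization of the hook-content evaluation in Theorem~\ref{hhsnthmschu} to the rectangular shape $\lambda=(k^{t})$, followed by a re-indexing of the resulting double product. First I would record the three pieces of combinatorial data attached to a cell $u=(i,j)$ of $\lambda$, where $1\le i\le t$ and $1\le j\le k$: the length $\ell(\lambda)=t$; the content $c(u)=j-i$; and, using $\lambda_i=k$ together with the fact that the conjugate shape satisfies $\lambda'_j=t$ for $1\le j\le k$, the hook length $h(u)=\lambda_i+\lambda'_j-i-j+1=k+t-i-j+1$. Substituting these into Theorem~\ref{hhsnthmschu} gives
\[
\mathrm{ehr}(\mathcal{O}(P_\lambda),n)=\prod_{u\in\lambda}\frac{n+\ell(\lambda)+c(u)}{h(u)}=\prod_{i=1}^{t}\prod_{j=1}^{k}\frac{n+t+j-i}{k+t-i-j+1}.
\]

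Next I would massage this into the announced shape. Writing the double product as the quotient $\dfrac{\prod_{i,j}(n+t+j-i)}{\prod_{i,j}(k+t-i-j+1)}$, I apply the substitution $i\mapsto t+1-i$ in the numerator, which turns $n+t+j-i$ into $n+i+j-1$, and the substitution $i\mapsto t+1-i$, $j\mapsto k+1-j$ in the denominator, which turns $k+t-i-j+1$ into $i+j-1$. Because each index runs over a full interval ($i$ over $[t]$, $j$ over $[k]$), these substitutions only permute the factors and leave the products unchanged, so
\[
\mathrm{ehr}(\mathcal{O}(P_\lambda),n)=\frac{\prod_{i=1}^{t}\prod_{j=1}^{k}(n+i+j-1)}{\prod_{i=1}^{t}\prod_{j=1}^{k}(i+j-1)}=\prod_{i=1}^{t}\prod_{j=1}^{k}\frac{i+j+n-1}{i+j-1},
\]
which is the desired identity.

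There is no genuine obstacle here: the statement is a bookkeeping consequence of Theorem~\ref{hhsnthmschu}. The only points requiring a little care are that the re-indexings of the numerator and denominator are carried out independently, and that the reversal $j\mapsto k+1-j$ is used only in the denominator (it is not needed, and would spoil the term-by-term match, in the numerator). As an alternative route one could bypass Theorem~\ref{hhsnthmschu} entirely and quote the classical MacMahon box formula for the number of plane partitions contained in a $t\times k\times n$ box, since $\mathrm{ehr}(\mathcal{O}(P_{(k^{t})}),n)$ counts exactly those plane partitions; but the derivation from Theorem~\ref{hhsnthmschu} is the most self-contained given the development above.
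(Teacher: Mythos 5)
Your proposal is correct and follows essentially the same route as the paper: both specialize the hook-content formula of Theorem~\ref{hhsnthmschu} to the rectangle $(k^t)$ using $h(u)=k+t-i-j+1$ and $c(u)=j-i$, and then re-index the double product to reach $\prod_{i,j}\frac{i+j+n-1}{i+j-1}$. Your version is in fact slightly more careful, since you make the independent re-indexings of numerator and denominator explicit, whereas the paper compresses this into two unexplained intermediate equalities (one of which contains a sign typo in the hook length).
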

\begin{proof}
For any $u = (i, j) \in \lambda=(k^t)$, we have $h(u) = k-i+t-j+1$ and $c(u) = j-i$. By Theorem \ref{hhsnthmschu}, the Ehrhart polynomial of $\mathcal{O}(P_{\lambda})$ is
\[
\mathrm{ehr}(\mathcal{O}(P_{\lambda}),n)= \prod_{i=1}^t\prod_{j=1}^k\frac{n+t+j-i}{k-i+t-j-1} = \prod_{i=1}^t\prod_{j=1}^k\frac{n+k-j+i}{i+j-1}=\prod_{i=1}^t\prod_{j=1}^k\frac{i+j+n-1}{i+j-1}.
\]
This completes the proof.
\end{proof}

\begin{tiny}
\begin{table}[htbp]
    	\centering
    	\caption{Some specific sequences of $\mathrm{Ehr}(\mathcal{O}(P_{\lambda}),x)$, $\lambda=(k,k,\ldots,k)$.}
    	\begin{tabular}{c||c|c|c|c|c|c|c|c|c|c}
    		\hline \hline
$\ell(\lambda),k$  & 2 & 3 & 4 & 5 & 6 & 7 & 8 & 9 & 10 &11 \\
    		\hline
$2$  & A002415 & A006542 & A006857 & A108679 & A134288 & A134289 & A134290 & A134291 & A140925 & A140934 \\
    		\hline
$3$  & --- & A047819 & A107915 & A140901 & A140903 & A140907 & A140912 & A140918 & A140926 & A140935 \\
    		\hline
$4$  & --- & --- & A047835 & A140902 & A140904 & A140908 & A140913 & A140919 & A140927 & A140936 \\
    		\hline
$5$  & --- & --- & --- & A047831 & A140905 & A140909 & A140914 & A140920 & A140928 & A140937 \\
    		\hline
$6$  & --- & --- & --- & --- & A140906 & A140910 & A140915 & A140921 & A140929 & A140938 \\
    		\hline
$7$  & --- & --- & --- & --- & --- & A140911 & A140916 & A140922 & A140930 & A140939 \\
    		\hline
$8$  & --- & --- & --- & --- & --- & --- & A140917 & A140923 & A140931 & A140940 \\
    		\hline
$9$  & --- & --- & --- & --- & --- & --- & --- & A140924 & A140932 & A140941 \\
    		\hline
$10$ & --- & --- & --- & --- & --- & --- & --- & --- & A140933 & A140942 \\
    		\hline
$11$ & --- & --- & --- & --- & --- & --- & --- & --- & --- & A140943 \\
    		\hline
    	\end{tabular}\label{LambdaHHK}
\end{table}
\end{tiny}

In Table \ref{LambdaHHK}, we list some sequences on the OEIS for $\mathrm{Ehr}(\mathcal{O}(P_{\lambda}),x)$, $\lambda=(k,k,\ldots,k)$. It is clear that Table \ref{LambdaHHK} is symmetric.

Observe all comments for a total of 43 sequences from  [A140901] to [A140943] on the OEIS, Berselli would like to count the number of $t\times k$ matrices with elements in $\{0,1,\ldots, n\}$, where each row and each column is in nondecreasing order. It is clear that this number is exactly $\mathrm{ehr}(P_{\lambda},n)$ with $\lambda=(k^t)$. But the specific formulas for these sequences given in \cite{Sloane23} are either empirical or conjectural. Now we provide the proof.

\begin{prop}{\em (Conjectured in \cite[A140934]{Sloane23})}\label{Proposit1409}
Let $k\geq 1$. Let $\lambda=(k^t)$. The Ehrhart polynomial of $\mathcal{O}(P_{\lambda})$ is given by
\[
\mathrm{ehr}(\mathcal{O}(P_{\lambda}),n)=\prod_{i=0}^{k-1}\binom{n+t+k-1}{n+i}\frac{i!}{(n+t+k-i-1)^{k-i-1}}.
\]
\end{prop}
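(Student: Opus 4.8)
The plan is to obtain the formula directly from the product expression already established in Corollary~\ref{MacMahonPKT} (itself a consequence of the hook-content evaluation in Theorem~\ref{hhsnthmschu}), namely
\[
\mathrm{ehr}(\mathcal{O}(P_{\lambda}),n)=\prod_{i=1}^t\prod_{j=1}^k\frac{i+j+n-1}{i+j-1},\qquad \lambda=(k^t),
\]
by rewriting both this product and the claimed one as products of factorials and checking that they agree. So the argument is a finite manipulation rather than a new combinatorial construction; the only real work is bookkeeping.

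First I would carry out the inner product over $i$ in the MacMahon formula: for fixed $j$ one has $\prod_{i=1}^t(i+j+n-1)=(n+t+j-1)!/(n+j-1)!$ and $\prod_{i=1}^t(i+j-1)=(t+j-1)!/(j-1)!$, so that, after the substitution $j=i+1$,
\[
\mathrm{ehr}(\mathcal{O}(P_{\lambda}),n)=\prod_{i=0}^{k-1}\frac{(n+t+i)!\,i!}{(n+i)!\,(t+i)!}.
\]
On the other side, expanding the binomial coefficient in the statement,
\[
\prod_{i=0}^{k-1}\binom{n+t+k-1}{n+i}\frac{i!}{(n+t+k-i-1)^{k-i-1}}=\prod_{i=0}^{k-1}\frac{(n+t+k-1)!\,i!}{(n+i)!\,(t+k-1-i)!\,(n+t+k-i-1)^{k-i-1}}.
\]
Cancelling the common factor $\prod_{i=0}^{k-1}i!/(n+i)!$ and reindexing the second product by $i\mapsto k-1-i$, the claim reduces to the identity
\[
\prod_{i=0}^{k-1}(n+t+i)!=\prod_{i=0}^{k-1}\frac{(n+t+k-1)!}{(n+t+i)^{\,i}},
\]
that is, $\prod_{i=0}^{k-1}(n+t+i)!\cdot\prod_{i=0}^{k-1}(n+t+i)^{\,i}=\bigl((n+t+k-1)!\bigr)^{k}$.

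This last identity I would prove by a short counting argument. Writing $m=n+t$, for each $i$ we have $(m+k-1)!/(m+i)!=\prod_{l=i+1}^{k-1}(m+l)$, and multiplying over $0\le i\le k-1$ gives
\[
\frac{\bigl((m+k-1)!\bigr)^{k}}{\prod_{i=0}^{k-1}(m+i)!}=\prod_{i=0}^{k-1}\prod_{l=i+1}^{k-1}(m+l)=\prod_{l=1}^{k-1}(m+l)^{\,l},
\]
since for each $l$ the factor $m+l$ occurs exactly once for each of the $l$ indices $i\in\{0,1,\dots,l-1\}$, and $\prod_{l=1}^{k-1}(m+l)^{l}=\prod_{i=0}^{k-1}(m+i)^{i}$. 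Rearranging gives the desired identity, hence the proposition. I expect the main obstacle to be purely organizational: keeping the three successive reindexings straight and making sure no factor is dropped, there being no conceptual difficulty once Corollary~\ref{MacMahonPKT} is in hand. (The factorial manipulations presume $n+t\ge 1$, which holds for all $n\ge 0$ since $t\ge 1$; as both sides are polynomials in $n$, the identity then holds identically.)
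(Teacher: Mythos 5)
Your proposal is correct, and it follows the same overall strategy as the paper: both reduce the claimed formula to the MacMahon product of Corollary~\ref{MacMahonPKT} by rewriting everything in factorials. The difference lies in the last step. The paper stops at the resulting product identity and states that ``this equation is proved by Maple,'' whereas you actually prove it: after cancelling the common factors $\prod_{i}i!/(n+i)!$ and $\prod_i(t+i)!$ and reindexing $i\mapsto k-1-i$, you reduce everything to
\[
\prod_{i=0}^{k-1}(m+i)!\cdot\prod_{i=0}^{k-1}(m+i)^{i}=\bigl((m+k-1)!\bigr)^{k},\qquad m=n+t,
\]
and verify it by counting the multiplicity of each factor $m+l$ in $\prod_{i=0}^{k-1}\prod_{l=i+1}^{k-1}(m+l)$. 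All the intermediate computations (the telescoping of the inner product over $i$ in the MacMahon formula, the expansion of the binomial coefficient, and the reindexings) check out. What your argument buys is a fully human-verifiable, computer-free proof of the identity that the paper leaves to symbolic computation; what it costs is only the bookkeeping you already acknowledge. The parenthetical remark about extending by polynomiality is not really needed, since the identity is only required for integers $n\ge 0$, where every factorial argument is nonnegative.
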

\begin{proof}
We have
\begin{align*}
&\prod_{i=0}^{k-1}\binom{n+t+k-1}{n+i}\frac{i!}{(n+t+k-i-1)^{k-i-1}}
\\=&\prod_{i=1}^k\frac{(n+k+t-1)!(i-1)!}{(n+i-1)!(t+k-i)!(n+t+k-i)^{k-i}}
\\=&\prod_{i=1}^k\frac{(n+k+t-1)!(i-1)!}{(n+i-1)!(t+i-1)!(n+t+i-1)^{i-1}}
\\=&\prod_{i=1}^t\frac{(n+k+t-1)!(i-1)!}{(n+i-1)!(k+i-1)!(n+k+i-1)^{i-1}}.
\end{align*}
By Corollary \ref{MacMahonPKT}, we only need to prove
\[
\prod_{i=1}^t\frac{(n+k+t-1)!(i-1)!}{(n+i-1)!(k+i-1)!(n+k+i-1)^{i-1}}-\prod_{i=1}^t \prod_{j=1}^k\frac{i+j+n-1}{i+j-1}=0.
\]
This equation is proved by Maple. This completes the proof.
\end{proof}

A \emph{standard Young tableaux} (SYT) of shape $\lambda$ is a SSYT of shape $\lambda$ with the type $\alpha=(1^{|\lambda|})$. In \cite{RP.Stanley99}, a descent of a SYT $T$ is an integer $i$ such that $i+1$ appears in a lower row of than $i$. Let $D(T)$ be the set of all descents of $T$.

\begin{lem}{\em \cite[Chapter 7]{RP.Stanley99}}\label{Slamdescdt}
For any partition $\lambda\vdash m \geq 1$, $n\in \mathbb{P}$, we have
\[
s_{\lambda}(1^n)=\sum_{T}\binom{n-d(T)+m-1}{m},
\]
where $d(T)=\#D(T)$, summed over all SYT of shape $\lambda$.
\end{lem}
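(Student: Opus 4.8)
The plan is to prove this principal-specialization identity for Schur functions via the standardization bijection that underlies Stanley's theory of $(P,\omega)$-partitions, equivalently the fundamental quasisymmetric expansion of $s_\lambda$. By Proposition \ref{RSSYT} and the definition of the Schur function, $s_\lambda(1^n)$ is the number of SSYT of shape $\lambda$ with entries in $\{1,2,\ldots,n\}$. I would enumerate these tableaux by sorting them according to their standardizations and counting each fiber.

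First I would recall the standardization map $\mathrm{std}$ from SSYT of shape $\lambda$ to $\mathrm{SYT}(\lambda)$: given an SSYT $U$, list its cells in weakly increasing order of entries, breaking each tie among cells of equal entry by reading those cells from left to right (a genuine total order, since column-strictness forbids two equal entries in one column), and let $\mathrm{std}(U)$ record the position of each cell in this list; one checks that $\mathrm{std}(U)\in\mathrm{SYT}(\lambda)$. The key structural step is to describe the fiber: fix $Q\in\mathrm{SYT}(\lambda)$ with $m=|\lambda|$ cells, and for an SSYT $U$ write $a_k$ for the entry of $U$ in the cell carrying label $k$ in $Q$. Then $U\mapsto(a_1,\ldots,a_m)$ should be a bijection from $\{U:\mathrm{std}(U)=Q\}$ onto the set of integer sequences $1\le a_1\le a_2\le\cdots\le a_m$ with $a_k<a_{k+1}$ precisely when $k\in D(Q)$, and under this bijection $U$ has all entries $\le n$ iff $a_m\le n$. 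This is where the row-weak, column-strict constraints are used: when $k\in D(Q)$ the equality $a_k=a_{k+1}$ would force $\mathrm{std}$ to invert the labels $k,k+1$ (the cell of $k+1$ lies strictly below, hence strictly to the left within a value band, so it would receive the smaller label), whereas when $k\notin D(Q)$ equality is compatible with $\mathrm{std}(U)=Q$; conversely any admissible sequence reassembles into a genuine SSYT, column-strictness being automatic because between the $Q$-labels of two cells in one column there is always a descent of $Q$ (otherwise one never moves to a lower row in going from the upper label to the lower).

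With the fiber identified the count is routine. For fixed $Q$ with $d(Q)=\#D(Q)$ descents, the substitution $b_k=a_k-\#\{j\in D(Q):j<k\}$ converts the constraints into $1\le b_1\le b_2\le\cdots\le b_m$ with $b_m\le n-d(Q)$, so the number of such sequences is $\binom{n-d(Q)+m-1}{m}$ by stars and bars (which correctly evaluates to $0$ when $n<d(Q)+1$, consistent with $s_\lambda(1^n)=0$ for $n<\ell(\lambda)$). Summing over all $Q\in\mathrm{SYT}(\lambda)$ gives
\[
s_\lambda(1^n)=\sum_{T\in\mathrm{SYT}(\lambda)}\binom{n-d(T)+m-1}{m},
\]
which is the assertion. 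Equivalently one may package the same steps as $s_\lambda=\sum_{T\in\mathrm{SYT}(\lambda)}F_{D(T),m}$ for Gessel's fundamental quasisymmetric functions, followed by the elementary specialization $F_{S,m}(1^n)=\binom{n-|S|+m-1}{m}$.

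The main obstacle is the middle step: verifying that standardization is well defined and that the fiber over each $Q\in\mathrm{SYT}(\lambda)$ is exactly the set of weakly increasing sequences whose strict ascents are prescribed by $D(Q)$. This is precisely the combinatorial content of the fundamental quasisymmetric expansion of Schur functions (Stanley, \cite{RP.Stanley99}, Chapter 7); once it is granted, the specialization and the binomial count are bookkeeping.
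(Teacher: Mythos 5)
Your proof is correct, and the paper itself offers no argument for this lemma -- it is simply cited from Stanley's \emph{Enumerative Combinatorics} Vol.~2, Chapter~7, whose proof is exactly the standardization/fundamental-quasisymmetric-function argument you give. Your fiber description (weakly increasing sequences with strict ascents prescribed by $D(Q)$), the shift $b_k=a_k-\#\{j\in D(Q):j<k\}$, and the resulting count $\binom{n-d(Q)+m-1}{m}$ are all right, so nothing further is needed.
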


\begin{cor}
Let $k\geq 1$. Let $\lambda=(k^t)$. The Ehrhart polynomial of $\mathcal{O}(P_{\lambda})$ is given by
\[
\mathrm{ehr}(\mathcal{O}(P_{\lambda}),n)=\sum_{T}\binom{n+(k+1)\cdot t-d(T)-1}{k\cdot t},
\]
where summed over all SYT of shape $\lambda$.
\end{cor}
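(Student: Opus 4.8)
The plan is to obtain this as a one-line consequence of the two previously established facts: the hook-content evaluation of $\mathrm{ehr}(\mathcal{O}(P_\lambda),n)$ in Theorem \ref{hhsnthmschu}, and the descent expansion of a principal specialization of a Schur function in Lemma \ref{Slamdescdt}. First I would recall that for $\lambda=(k^t)$ Theorem \ref{hhsnthmschu} gives $\mathrm{ehr}(\mathcal{O}(P_\lambda),n)=s_\lambda(1^{n+\ell(\lambda)})$, and here $\ell(\lambda)=t$ while $|\lambda|=kt$. So the problem reduces to rewriting $s_\lambda(1^{n+t})$ as a sum over standard Young tableaux.

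Next I would apply Lemma \ref{Slamdescdt}, which asserts that for any $\lambda\vdash m$ and $n'\in\mathbb{P}$ one has $s_\lambda(1^{n'})=\sum_T\binom{n'-d(T)+m-1}{m}$, summed over all SYT $T$ of shape $\lambda$, with $d(T)=\#D(T)$. Taking $n'=n+t$ and $m=kt$, the binomial coefficient becomes
\[
\binom{(n+t)-d(T)+kt-1}{kt}=\binom{n+(k+1)\cdot t-d(T)-1}{k\cdot t},
\]
which is exactly the claimed expression. Combining the two displays completes the proof.

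There is no real obstacle here; the statement is essentially a substitution. The only points needing a word of care are (i) tracking the shift of the specialization argument from $n$ to $n+t$ coming from the $\ell(\lambda)$ term in Theorem \ref{hhsnthmschu}, and (ii) noting that the hypothesis $n'\in\mathbb{P}$ of Lemma \ref{Slamdescdt} is harmless, since $n'=n+t\geq 1$ for all $n\geq 0$ and both sides are polynomials in $n$, so the identity propagates as an identity of polynomials.
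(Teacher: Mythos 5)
Your proposal is correct and is exactly the derivation the paper intends: the corollary is stated immediately after Theorem \ref{hhsnthmschu} and Lemma \ref{Slamdescdt} precisely so that it follows by substituting $n'=n+\ell(\lambda)=n+t$ and $m=|\lambda|=kt$ into the lemma. The arithmetic $\binom{(n+t)-d(T)+kt-1}{kt}=\binom{n+(k+1)t-d(T)-1}{kt}$ checks out, and your remarks on the polynomiality of both sides are a harmless bonus.
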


When $\lambda=(k,k)$, the poset $P_{\lambda}$ is called the \emph{ladder poset}. We obtain the following results.
\begin{thm}\label{EhrhOkk-Naray}
Let $\lambda=(k,k)$ with $k\geq 1$. Then
\begin{align*}
\mathrm{Ehr}(\mathcal{O}(P_{\lambda}),x)=\frac{\sum_{i=1}^kT(k,i)x^{i-1}}{(1-x)^{2k+1}},
\end{align*}
where $T(k,i)=\frac{1}{i}\binom{k-1}{i-1}\binom{k}{i-1}, 1\leq i\leq k$, is the triangle of the Narayana numbers \cite[A001263]{Sloane23}.
\end{thm}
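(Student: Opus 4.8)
The plan is to combine Theorem~\ref{hhsnthmschu}, Lemma~\ref{Slamdescdt}, and the classical encoding of two-row standard Young tableaux by Dyck paths. \emph{Step 1 (from the Ehrhart polynomial to a descent sum over SYT).} Since $\ell(\lambda)=2$, Theorem~\ref{hhsnthmschu} gives $\mathrm{ehr}(\mathcal{O}(P_{(k,k)}),n)=s_{(k,k)}(1^{n+2})$. Applying Lemma~\ref{Slamdescdt} with $\lambda=(k,k)$, $m=|\lambda|=2k$, and with $n$ replaced by $n+2$ yields
\[
\mathrm{ehr}(\mathcal{O}(P_{(k,k)}),n)=\sum_{T}\binom{n+2k+1-d(T)}{2k},
\]
the sum running over all SYT $T$ of shape $(k,k)$, where $d(T)=\#D(T)$.

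\emph{Step 2 (assembling the Ehrhart series).} I would sum the previous display over $n\geq 0$ using the elementary identity $\sum_{n\geq 0}\binom{n+A}{2k}x^{n}=x^{2k-A}/(1-x)^{2k+1}$, valid for every integer $A$ with $0\leq A\leq 2k$, applied termwise with $A=2k+1-d(T)$. This produces
\[
\mathrm{Ehr}(\mathcal{O}(P_{(k,k)}),x)=\frac{\sum_{T}x^{d(T)-1}}{(1-x)^{2k+1}}=\frac{\sum_{i\geq 1}\bigl(\#\{T:d(T)=i\}\bigr)x^{i-1}}{(1-x)^{2k+1}}.
\]
One must check here that $1\leq d(T)\leq k$ for every SYT $T$ of shape $(k,k)$: this guarantees both that the identity applies (so that $A\in[k+1,2k]$ and no negative powers of $x$ appear) and that the resulting numerator has degree at most $k-1$, as required; both facts are immediate consequences of Step 3.

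\emph{Step 3 (the descent distribution is Narayana).} Encode a SYT $T$ of shape $(k,k)$ as the lattice path from $(0,0)$ that reads the entries $1,2,\ldots,2k$ in increasing order and takes an east step when the current entry lies in row~$1$ and a north step when it lies in row~$2$. Strict increase down the columns of $T$ is exactly the condition that the path stays weakly below the diagonal, so $T$ maps bijectively to a Dyck path of semilength $k$. Because entry $1$ lies in row~$1$ and entry $2k$ in row~$2$, the path begins with an east step and ends with a north step, and a descent of $T$ (an $i$ with $i$ in row~$1$ and $i+1$ in row~$2$) is precisely an east step immediately followed by a north step, i.e.\ a peak of the Dyck path. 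Hence $d(T)$ is the number of peaks, which lies in $\{1,\ldots,k\}$, and the number of Dyck paths of semilength $k$ with $i$ peaks is the Narayana number $\tfrac1i\binom{k-1}{i-1}\binom{k}{i-1}=T(k,i)$. Substituting $\#\{T:d(T)=i\}=T(k,i)$ into Step 2 finishes the proof.

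The main obstacle is Step 3: one has to present the tableau-to-path bijection carefully enough that the descent statistic is manifestly carried onto the peak statistic, and then invoke (or reprove, via a first-return decomposition or via Lemma~\ref{LGV-noninter-path}) the standard fact that peaks of Dyck paths are Narayana-distributed. If one prefers to remain within the determinantal framework of this section, an alternative is to start from $\mathrm{ehr}(\mathcal{O}(P_{(k,k)}),n)=\det_{1\leq i,j\leq 2}\binom{n+k}{n+i-j}=\binom{n+k}{n}^{2}-\binom{n+k}{n+1}\binom{n+k}{n-1}$ and extract the numerator of its generating function directly, but that route is more computational and less transparent.
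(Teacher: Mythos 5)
Your proof is correct, but it takes a genuinely different route from the paper's. The paper's proof starts from the MacMahon product formula (Corollary \ref{MacMahonPKT}), which for $\lambda=(k,k)$ gives $\mathrm{ehr}(\mathcal{O}(P_{\lambda}),n)=\frac{1}{n+k+1}\binom{n+k+1}{k+1}\binom{n+k+1}{k}$, and then reduces the theorem to the single binomial identity $\sum_{i=1}^k\frac{1}{i}\binom{k-1}{i-1}\binom{k}{i-1}\binom{2k+n-i+1}{n-i+1}=\frac{1}{n+k+1}\binom{n+k+1}{k+1}\binom{n+k+1}{k}$, which it verifies by Maple. You instead pass through the descent generating function over standard Young tableaux: your Step~1 is exactly the corollary following Lemma \ref{Slamdescdt}, specialized to $t=2$, and Steps~2--3 identify the $h^*$-vector with the descent distribution on SYT of shape $(k,k)$, carried by the classical two-row-tableau-to-Dyck-path bijection onto the peak distribution, which is Narayana. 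Your side conditions are in order: every SYT of shape $(k,k)$ has $1\leq d(T)\leq k$ (entry $1$ in row~1 and entry $2k$ in row~2 force at least one descent, and distinct descents occupy distinct row-1 cells), so the termwise summation in Step~2 is legitimate and the numerator has degree at most $k-1$; the translation east $\mapsto$ up-step, north $\mapsto$ down-step does send descents to peaks as you claim. What each approach buys: yours is fully bijective, avoids computer algebra (modulo the standard Narayana count of peaks, which can be reproved via a first-return decomposition or Lemma \ref{LGV-noninter-path}), and explains \emph{why} the numerator is the Narayana row rather than merely confirming it; the paper's is shorter given the product formula already in hand, but is a certificate-style verification that gives no combinatorial insight into the $h^*$-vector.
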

\begin{proof}
By Corollary \ref{MacMahonPKT}, we get
\begin{align*}
\mathrm{ehr}(\mathcal{O}(P_{\lambda}),n)&=\frac{(n+k+1)!(n+k)!}{n!k!(n+1)!(k+1)!} = \frac{1}{n+k+1}\binom{n+k+1}{k+1}\binom{n+k+1}{k}.
\end{align*}
Since
\[
T(k, i)=\frac{1}{i}\binom{k-1}{i-1}\binom{k}{i-1} \quad \text{ and } \quad \frac{1}{(1-x)^{2k+1}}=\sum_{n\geq 0}\binom{2k+n}{n}x^n,
\]
the theorem holds from
\[
\sum_{i=1}^k\frac{1}{i}\binom{k-1}{i-1}\binom{k}{i-1}\binom{2k+n-i+1}{n-i+1}
=\frac{1}{n+k+1}\binom{n+k+1}{k+1}\binom{n+k+1}{k},
\]
which can be verified using Maple.
\end{proof}

The Narayana numbers have rich literature \cite{NYLi08,Slanke98,Slanke00}.

\begin{rem}
Let $\lambda=(k,k)$ with $k\geq 1$. The Ehrhart polynomial of $\mathcal{O}(P_{\lambda})$ is the dimension of the space of the semi-invariants of weight $n$ in \cite[Proposition 8.4]{Mukai}. This gives the alternative formula
\[
\mathrm{ehr}(\mathcal{O}(P_{\lambda}),n)=\binom{k+n+1}{n}^2-\binom{k+n+2}{n+1}\binom{k+n}{n-1}.
\]

The formula $\mathrm{ehr}(\mathcal{O}(P_{\lambda}),n)=\frac{1}{n+k+1}\binom{n+k+1}{k+1}\binom{n+k+1}{k}$ is also the number of permutations of $n+k+1$ that avoid the pattern $132$ and have exactly $k$ descents (see \cite{Hyatt-Remmel}).
\end{rem}

\begin{cor}{\em (Conjectured by Berselli in \cite[A140934]{Sloane23})}
Let $\lambda=(11,11)$.

\begin{enumerate}

\item The $\mathrm{ehr}(\mathcal{O}(P_{\lambda}),n)$ is the $12$-th column of the triangle \cite[A001263]{Sloane23}.

\item The Ehrhart series of $\mathcal{O}(P_{\lambda})$ is given by
\begin{footnotesize}
\begin{align*}
\mathrm{Ehr}(\mathcal{O}(P_{\lambda}),x)=\frac{1+55x+825x^2+4950x^3+13860x^4+19404x^5+13860x^6+4950x^7+825x^8+55x^9+x^{10}}{(1-x)^{23}}.
\end{align*}
\end{footnotesize}

\item The Ehrhart polynomial of $\mathcal{O}(P_{\lambda})$ is given by
\[
\mathrm{ehr}(\mathcal{O}(P_{\lambda}),n)=\frac{n+12}{12n+12}\binom{n+11}{11}^2.
\]

\item The Ehrhart polynomial of $\mathcal{O}(P_{\lambda})$ satisfies
\[
\mathrm{ehr}(\mathcal{O}(P_{\lambda}),n)=\prod_{i=1}^{11}\frac{[A002378](n+i)}{[A002378](i)}.
\]
\end{enumerate}
\end{cor}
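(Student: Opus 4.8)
The strategy is to deduce all four assertions by specializing $k=11$ in the results already established for the ladder poset $P_{(k,k)}$, so that essentially no work beyond bookkeeping is needed.

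I would begin with (3). The proof of Theorem~\ref{EhrhOkk-Naray} records the closed form $\mathrm{ehr}(\mathcal{O}(P_{(k,k)}),n)=\frac{1}{n+k+1}\binom{n+k+1}{k+1}\binom{n+k+1}{k}$. Putting $k=11$ and using the rewritings $\binom{n+12}{12}=\frac{n+12}{12}\binom{n+11}{11}$ and $\binom{n+12}{11}=\frac{n+12}{n+1}\binom{n+11}{11}$, the prefactor $\frac{1}{n+12}$ cancels and one is left with $\frac{n+12}{12(n+1)}\binom{n+11}{11}^2=\frac{n+12}{12n+12}\binom{n+11}{11}^2$, which is (3). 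For (1), I would keep the same closed form in the shape $\frac{1}{n+12}\binom{n+12}{12}\binom{n+12}{11}$ and compare with the defining formula $T(a,b)=\frac{1}{a}\binom{a}{b}\binom{a}{b-1}$ of the Narayana triangle \cite[A001263]{Sloane23}: this is exactly $T(n+12,12)$, so as $n$ ranges over $\mathbb{N}$ one obtains precisely the $12$-th column of \cite[A001263]{Sloane23}.

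For (2), I would apply Theorem~\ref{EhrhOkk-Naray} verbatim with $k=11$: the numerator of $\mathrm{Ehr}(\mathcal{O}(P_{(11,11)}),x)$ is $\sum_{i=1}^{11}T(11,i)x^{i-1}$ with $T(11,i)=\frac{1}{i}\binom{10}{i-1}\binom{11}{i-1}$, and the denominator is $(1-x)^{2\cdot 11+1}=(1-x)^{23}$. Evaluating $T(11,1),\ldots,T(11,6)$ gives $1,55,825,4950,13860,19404$, and the symmetry $T(11,i)=T(11,12-i)$ of the Narayana numbers supplies the remaining coefficients, reproducing the displayed rational function. Finally, for (4), I would invoke the MacMahon formula of Corollary~\ref{MacMahonPKT} with $t=2$ and $k=11$, which gives $\mathrm{ehr}(\mathcal{O}(P_{(11,11)}),n)=\prod_{j=1}^{11}\frac{(n+j)(n+j+1)}{j(j+1)}$; since the pronic numbers $[A002378]$ satisfy $[A002378](m)=m(m+1)$, each factor equals $\frac{[A002378](n+j)}{[A002378](j)}$, which is exactly (4).

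None of these steps is genuinely hard; the only points requiring care are fixing the indexing convention of the Narayana triangle so that ``the $12$-th column'' is matched correctly in (1) and (2), and verifying the handful of binomial rewritings used to pass among the equivalent closed forms of the Ehrhart polynomial. These identities are of the same kind confirmed with Maple elsewhere in the paper, and the mutual consistency of (2), (3), and (4) can be cross-checked by expanding $\sum_{n\geq 0}\mathrm{ehr}(\mathcal{O}(P_{(11,11)}),n)x^n$ as a rational function over the denominator $(1-x)^{23}$.
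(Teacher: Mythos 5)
Your proposal is correct and follows essentially the same route as the paper: all four parts are obtained by specializing Theorem \ref{EhrhOkk-Naray} and Corollary \ref{MacMahonPKT} to $k=11$, with the closed form $\frac{1}{n+12}\binom{n+12}{12}\binom{n+12}{11}$ rewritten to yield (1) and (3), the Narayana numerator and $(1-x)^{23}$ giving (2), and the pronic-number factorization of MacMahon's product giving (4). You merely spell out the binomial manipulations and the explicit values $T(11,i)$ in more detail than the paper's terse proof does.
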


\begin{proof}
The sequence [A001263] is the triangle of Narayana numbers. By Theorem \ref{EhrhOkk-Naray}, we obtain
\begin{small}
\[
\mathrm{ehr}(\mathcal{O}(P_{(11,11)}),n)=\frac{1}{n+12}\binom{n+12}{12}\binom{n+12}{11}
=\frac{1}{12}\binom{n+11}{11}\binom{n+12}{11}
=\frac{n+12}{12n+12}\binom{n+11}{11}^2.
\]
\end{small}
We have completed the proof of parts (1) and (3). The Ehrhart series of $\mathcal{O}(P_{\lambda})$ is obtained by Theorem \ref{EhrhOkk-Naray}. The $[A002378](i)=i(i+1)$ is the oblong number. By Corollary \ref{MacMahonPKT}, this completes the proof of part (4).
\end{proof}

\begin{cor}{\em (Conjectured by Eldar in \cite[A140934]{Sloane23})}
Let $\lambda=(11,11)$. The Ehrhart polynomial of $\mathcal{O}(P_{\lambda})$ satisfies
\begin{align*}
\sum_{n\geq 0}\frac{1}{\mathrm{ehr}(\mathcal{O}(P_{\lambda}),n)}&=\frac{3538258540001}{8820}-40646320\cdot \pi^2,
\\ \sum_{n\geq 0}\frac{(-1)^n}{\mathrm{ehr}(\mathcal{O}(P_{\lambda}),n)}&=\frac{1678950598}{2205}-23068672\cdot \frac{\log 2}{21}.
\end{align*}
\end{cor}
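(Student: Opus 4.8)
The plan is to reduce both identities to a partial fraction expansion of $1/\mathrm{ehr}(\mathcal{O}(P_{(11,11)}),n)$ followed by term-by-term summation against the classical values $\sum_{m\ge1}m^{-2}=\pi^2/6$, $\sum_{m\ge1}(-1)^m/m=-\log 2$, and $\sum_{m\ge1}(-1)^m/m^2=-\pi^2/12$. First I would put the Ehrhart polynomial in fully factored form. By Theorem~\ref{EhrhOkk-Naray} with $k=11$ (equivalently by Corollary~\ref{MacMahonPKT}) we have $\mathrm{ehr}(\mathcal{O}(P_{(11,11)}),n)=\frac{1}{n+12}\binom{n+12}{12}\binom{n+12}{11}$, and cancelling the factor $n+12$ gives
\begin{align*}
\mathrm{ehr}(\mathcal{O}(P_{(11,11)}),n)&=\frac{1}{11!\,12!}\,(n+1)(n+12)\prod_{i=2}^{11}(n+i)^2,\\
\frac{1}{\mathrm{ehr}(\mathcal{O}(P_{(11,11)}),n)}&=\frac{11!\,12!}{(n+1)(n+12)\prod_{i=2}^{11}(n+i)^2}.
\end{align*}
This is a proper rational function of $n$ of degree $-22$ whose only poles are a simple pole at each of $n=-1$ and $n=-12$ and a double pole at each of $n=-2,\dots,-11$; in particular both series converge absolutely since the general term is $O(n^{-22})$.

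Second, I would write the decomposition
\[
\frac{1}{\mathrm{ehr}(\mathcal{O}(P_{(11,11)}),n)}=\sum_{j=1}^{12}\left(\frac{A_j}{n+j}+\frac{B_j}{(n+j)^2}\right),\qquad B_1=B_{12}=0,
\]
with explicit rational residues $A_j,B_j$ obtained from the standard residue formulas (or certified by Maple). The structural point is the reflection symmetry: the substitution $n\mapsto -13-n$ fixes $1/\mathrm{ehr}(\mathcal{O}(P_{(11,11)}),n)$, since it swaps $(n+1)\leftrightarrow(n+12)$ and permutes the factors $(n+2),\dots,(n+11)$ among themselves. Matching residues then forces $A_{13-j}=-A_j$ and $B_{13-j}=B_j$; in particular $\sum_{j}A_j=0$ and $\sum_{j}(-1)^{j}B_j=0$.

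Third, I would sum term by term after the shift $m=n+j$. Writing $H_m=\sum_{i=1}^{m}1/i$ and $H^{(2)}_m=\sum_{i=1}^{m}1/i^2$, the relation $\sum_{j}A_j=0$ makes the logarithmic divergences of the individual harmonic tails cancel, so that
\[
\sum_{n\ge0}\frac{1}{\mathrm{ehr}(\mathcal{O}(P_{(11,11)}),n)}=-\sum_{j=1}^{12}A_jH_{j-1}+\frac{\pi^2}{6}\sum_{j=1}^{12}B_j-\sum_{j=1}^{12}B_jH^{(2)}_{j-1};
\]
the $\pi^2$-coefficient $\tfrac16\sum_j B_j$ collapses to $-40646320$ and the remaining rational number to $\tfrac{3538258540001}{8820}$. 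For the alternating series the same bookkeeping, now using $\sum_{m\ge1}(-1)^m/m=-\log2$ and $\sum_{m\ge1}(-1)^m/m^2=-\pi^2/12$, yields
\[
\sum_{n\ge0}\frac{(-1)^n}{\mathrm{ehr}(\mathcal{O}(P_{(11,11)}),n)}=-(\log2)\sum_{j=1}^{12}(-1)^jA_j-\frac{\pi^2}{12}\sum_{j=1}^{12}(-1)^jB_j+(\text{rational}),
\]
and the would-be $\pi^2$-term vanishes because $\sum_j(-1)^jB_j=0$, leaving exactly $\tfrac{1678950598}{2205}-\tfrac{23068672}{21}\log2$.

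Finally, the content is conceptual rather than technical: the only points requiring care are (a) justifying the rearrangement of the partial-fraction pieces into summable blocks, which is immediate from $\sum_j A_j=0$ (or, alternatively, from pairing $j$ with $13-j$ via $A_{13-j}=-A_j$), and (b) the arithmetic of the rational constants, which I would simply certify with a computer algebra system. The observation worth stressing in the write-up is that the single symmetry $n\leftrightarrow-13-n$ simultaneously explains the absence of a $\log2$ term in the first sum (no alternating series enters its evaluation) and the absence of a $\pi^2$ term in the second.
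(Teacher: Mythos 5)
Your proposal is correct, and it is substantially more informative than the paper's own proof, which consists of a single sentence: starting from $\mathrm{ehr}(\mathcal{O}(P_{(11,11)}),n)=\frac{n+12}{12n+12}\binom{n+11}{11}^2$, the authors simply ``complete the calculation using Maple.'' Your route makes the mechanism behind that black box explicit: the factorization $\mathrm{ehr}(\mathcal{O}(P_{(11,11)}),n)=\frac{1}{11!\,12!}(n+1)(n+12)\prod_{i=2}^{11}(n+i)^2$ is right, the partial-fraction shape (simple poles at $-1,-12$, double poles at $-2,\dots,-11$, no polynomial part since the degree is $-22$) is right, and the symmetry $n\mapsto-13-n$ does force $A_{13-j}=-A_j$ and $B_{13-j}=B_j$, hence $\sum_jA_j=0$ (which legitimizes the regrouping of the individually divergent harmonic pieces via partial sums $H_{N+j}-H_N\to0$) and $\sum_j(-1)^jB_j=0$ (which kills the would-be $\pi^2$ term in the alternating sum). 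This buys a structural explanation of the two evaluations that the paper does not offer: the $\pi^2$ in the first sum comes from $\zeta(2)$ weighted by $\tfrac16\sum_jB_j$, and the absence of $\pi^2$ in the second is not an accident of arithmetic but a parity consequence of the reflection symmetry of the Ehrhart polynomial. Both arguments ultimately defer the final rational bookkeeping to a computer algebra system, so the two proofs are equally rigorous at that step; yours is preferable as exposition. If you write it up, the only points to state explicitly are the partial-sum justification for the non-alternating case (as you note) and the values $\sum_{m\ge1}(-1)^m/m=-\log2$, $\sum_{m\ge1}(-1)^m/m^2=-\pi^2/12$ with the shift $(-1)^n=(-1)^j(-1)^{n+j}$.
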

\begin{proof}
By $\mathrm{ehr}(\mathcal{O}(P_{(11,11)}),n)=\frac{n+12}{12n+12}\binom{n+11}{11}^2$, we can directly complete the calculation using Maple.
\end{proof}

Let us consider another special case.
\begin{prop}
Suppose $k\geq 1$. Let $\lambda=(k^k)$. The $\mathrm{ehr}(\mathcal{O}(P_{\lambda}),n)$ is the number of tilings of an $\langle k,n,k\rangle$ hexagon \cite[A103905]{Sloane23}.
\end{prop}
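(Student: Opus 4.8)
The plan is to reduce the statement to a box plane partition count and then invoke the classical correspondence with lozenge tilings of a hexagon. First I would recall, exactly as in the discussion preceding Proposition \ref{Proposit1409} in Section \ref{sec-Ferrers}, that for $\lambda=(k^t)$ the quantity $\mathrm{ehr}(\mathcal{O}(P_\lambda),n)$ equals the number of $t\times k$ arrays with entries in $\{0,1,\ldots,n\}$ that are weakly monotone along every row and every column; equivalently, the number of plane partitions contained in a $t\times k\times n$ box. Specializing to $\lambda=(k^k)$ (so $t=\ell(\lambda)=k$) gives the number of plane partitions fitting in a $k\times k\times n$ box.

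Next I would use the standard geometric bijection between plane partitions contained in an $a\times b\times c$ box and lozenge (rhombus) tilings of a hexagon with consecutive side lengths $a,b,c,a,b,c$: one regards the plane partition as a stack of unit cubes packed into the corner of the box and reads off the induced tiling of the visible surface by the three types of rhombi. Under this correspondence the $k\times k\times n$ box matches the hexagon $\langle k,k,n\rangle$, so $\mathrm{ehr}(\mathcal{O}(P_{(k^k)}),n)$ is the number of tilings of $\langle k,k,n\rangle$.

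Finally I would reconcile $\langle k,k,n\rangle$ with the hexagon $\langle k,n,k\rangle$ named in the statement. The cleanest route is to compare closed formulas directly: by Corollary \ref{MacMahonPKT} with $t=k$ we have
\[
\mathrm{ehr}(\mathcal{O}(P_{(k^k)}),n)=\prod_{i=1}^{k}\prod_{j=1}^{k}\frac{i+j+n-1}{i+j-1},
\]
whereas MacMahon's box formula counts the tilings of $\langle a,b,c\rangle$ by $\prod_{i=1}^{a}\prod_{j=1}^{b}\prod_{l=1}^{c}\frac{i+j+l-1}{i+j+l-2}$. Taking $(a,b,c)=(k,n,k)$ and telescoping the inner product over the index running up to $n$ collapses it to $\prod_{i=1}^{k}\prod_{l=1}^{k}\frac{i+l+n-1}{i+l-1}$, which is exactly the expression above after renaming $l$ to $j$. (Equivalently, MacMahon's formula is manifestly symmetric in $a,b,c$, so $\langle k,k,n\rangle$ and $\langle k,n,k\rangle$ are tiled in equally many ways.) This is precisely the description of the entries of \cite[A103905]{Sloane23}.

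The only real subtlety is bookkeeping of conventions: matching "at most $k$ rows, at most $k$ parts per row, parts $\le n$" to the correct assignment of the three hexagon side lengths as OEIS indexes \cite[A103905]{Sloane23}, and checking the telescoping identity above. Neither is a genuine obstacle; once the product identity is written down the argument is routine, so the main work is simply stating the plane-partition/hexagon dictionary carefully.
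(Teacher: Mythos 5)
Your proposal is correct, and it takes a somewhat more conceptual route than the paper. The paper's proof is a two-line formula check: it reads off from \cite[A103905]{Sloane23} the product formula $\prod_{i=1}^n\bigl(\prod_{t=k}^{2k-1}(i+t)\bigr)/\bigl(\prod_{j=0}^{k-1}(i+j)\bigr)$ for the number of tilings of a $\langle k,n,k\rangle$ hexagon and verifies that it agrees with the hook content evaluation of Theorem \ref{hhsnthmschu}; no combinatorial connection between the two objects is exhibited. You instead pass through the standard dictionary: $\mathrm{ehr}(\mathcal{O}(P_{(k^k)}),n)$ counts plane partitions in a $k\times k\times n$ box (as the paper itself notes in the discussion of Berselli's comments), these are in bijection with lozenge tilings of the hexagon with side lengths $k,k,n$, and the relabelling to $\langle k,n,k\rangle$ is handled either by the symmetry of MacMahon's box formula or by your telescoping of $\prod_{j=1}^{n}\frac{i+j+l-1}{i+j+l-2}=\frac{i+l+n-1}{i+l-1}$, which correctly recovers the product in Corollary \ref{MacMahonPKT} with $t=k$. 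Both arguments are valid; yours buys an actual bijective explanation of why the equality holds (and works for any $\langle a,b,c\rangle$ hexagon, not just the symmetric case), at the cost of invoking the cubes-in-a-corner correspondence, while the paper's buys brevity by leaning entirely on matching two closed-form products.
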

\begin{proof}
By \cite[A103905]{Sloane23}, we only need to prove
\begin{align*}
\mathrm{ehr}(\mathcal{O}(P_{\lambda}),n)=\prod_{i=1}^n\frac{\prod_{t=k}^{2k-1}(i+t)}{\prod_{j=0}^{k-1}(i+j)}.
\end{align*}
By Theorem \ref{hhsnthmschu}, this completes the proof.
\end{proof}

\section{The Poset $\bar{P}_{(k,k)}$ of Incomplete $P_{(k,k)}$}\label{sec-Pkk}

We consider the poset $\bar{P}_{(k,k)}$ of \emph{incomplete} $P_{(k,k)}$, defined by Figure \ref{G06}.
\begin{figure}[htp]
\centering
\includegraphics[width=0.4\linewidth]{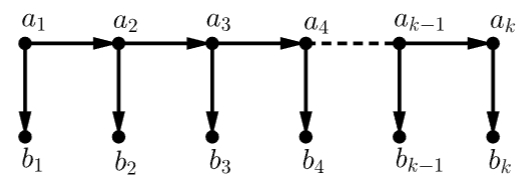}
\caption{The poset $\bar{P}_{(k,k)}$.}
\label{G06}
\end{figure}

\begin{thm}\label{stirlssnk}
Let $k\geq 1$. The Ehrhart polynomial of $\mathcal{O}(\bar{P}_{(k,k)})$ is given by
\begin{align*}
\mathrm{ehr}(\mathcal{O}(\bar{P}_{(k,k)}),n)=S(n+k+1,n+1),
\end{align*}
where $S(r,t)$ is a Stirling number of the second kind \cite[Chapter 1]{RP.Stanley}.
\end{thm}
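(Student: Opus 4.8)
The plan is to compute the order polynomial of $\bar{P}_{(k,k)}$ directly from its Hasse diagram and then recognize the answer as a Stirling number through a classical symmetric-function specialization. By Stanley's identity $\mathrm{ehr}(\mathcal{O}(\bar{P}_{(k,k)}),n)=\Omega_{\bar{P}_{(k,k)}}(n+1)$, it suffices to prove $\Omega_{\bar{P}_{(k,k)}}(m)=S(m+k,m)$ for every $m\ge 1$ and then substitute $m=n+1$.

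Reading off Figure \ref{G06}, the poset $\bar{P}_{(k,k)}$ consists of a backbone chain $c_1\prec c_2\prec\cdots\prec c_k$ together with $k$ pairwise incomparable elements $d_1,\ldots,d_k$ subject only to $d_\ell\prec c_\ell$ (this is the ladder $P_{(k,k)}$ with one of its two chains dissolved into an antichain, which is why it is called \emph{incomplete}). An order-preserving map $\tau\colon\bar{P}_{(k,k)}\to\{1,2,\ldots,m\}$ is then completely determined by the weakly increasing tuple $a_\ell:=\tau(c_\ell)$ together with independent choices $\tau(d_\ell)\in\{1,\ldots,a_\ell\}$; summing over all possibilities gives
\[
\Omega_{\bar{P}_{(k,k)}}(m)=\sum_{1\le a_1\le a_2\le\cdots\le a_k\le m}a_1a_2\cdots a_k=h_k(1,2,\ldots,m),
\]
the complete homogeneous symmetric polynomial $h_k$ evaluated at $x_1=1,x_2=2,\ldots,x_m=m$.

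It then remains to invoke the classical identity $h_k(1,2,\ldots,m)=S(m+k,m)$. The shortest route is a generating-function comparison: directly from the definition of $h_k$ one has $\sum_{k\ge 0}h_k(1,\ldots,m)t^k=\prod_{i=1}^{m}(1-it)^{-1}$, while the standard expansion $\sum_{r\ge 0}S(r,m)t^r=t^m\prod_{i=1}^{m}(1-it)^{-1}$ yields $\sum_{k\ge 0}S(m+k,m)t^k=\prod_{i=1}^{m}(1-it)^{-1}$; equating the coefficient of $t^k$ gives the identity. Taking $m=n+1$ produces $\mathrm{ehr}(\mathcal{O}(\bar{P}_{(k,k)}),n)=S(n+k+1,n+1)$, as desired.

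The computation itself is routine; the points requiring care are combinatorial rather than analytic. First, one must read the Hasse diagram of $\bar{P}_{(k,k)}$ correctly, so that each pendant $d_\ell$ contributes precisely the factor $a_\ell$ (and not $a_\ell-1$, nor a factor tied to $c_{\ell+1}$); second, one must keep the shift $m=n+1$ straight so as to land on $S(n+k+1,n+1)$ and not on $S(n+k,n)$. As an alternative to the symmetric-function identity I would give a direct bijection between the maps counted above and partitions of $[n+k+1]$ into $n+1$ blocks, via restricted growth strings: the $n+1$ record positions of the string carry the values $1,\ldots,n+1$ in order, while the $k$ non-record positions read from left to right encode $(d_1,\ldots,d_k)$, the $\ell$-th being inserted just after the $(a_\ell+1)$-st record and receiving the value $\tau(d_\ell)+1$. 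This bijective description is also the natural bridge to Theorem \ref{stirlssnkkEuler}, since tracking descents of the associated linear extensions will produce the second-order Eulerian triangle. A third option, matching the recursion $S(r,t)=t\,S(r-1,t)+S(r-1,t-1)$ against a recursion obtained by deleting $c_k$ and $d_k$, also works but is less transparent.
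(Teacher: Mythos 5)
Your proof is correct, and your reading of Figure \ref{G06} --- a backbone $k$-chain with a minimal pendant attached to each chain element --- agrees with the structure implicit in the paper's own argument, so the one step you could not verify against the figure is consistent with how the paper itself uses it. The route differs from the paper's in how the Stirling number is recognized. The paper conditions on the value of the top element to obtain the recursion $\mathrm{ehr}(\mathcal{O}(\bar{P}_{(k,k)}),n)=\sum_{i=0}^n(i+1)\,\mathrm{ehr}(\mathcal{O}(\bar{P}_{(k-1,k-1)}),i)$ with initial value $\binom{n+2}{2}$, and then checks that $S(n+k+1,n+1)$ satisfies the same recursion and initial condition via $S(r,t)=tS(r-1,t)+S(r-1,t-1)$ --- essentially the ``third option'' you mention at the end. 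You instead telescope that recursion all the way down to the closed form $\Omega_{\bar{P}_{(k,k)}}(m)=\sum_{1\le a_1\le\cdots\le a_k\le m}a_1a_2\cdots a_k=h_k(1,2,\ldots,m)$ and then invoke the specialization $h_k(1,\ldots,m)=S(m+k,m)$, proved by comparing $\sum_{k\ge0}h_k(1,\ldots,m)t^k=\prod_{i=1}^m(1-it)^{-1}$ with $\sum_{r\ge0}S(r,m)t^r=t^m\prod_{i=1}^m(1-it)^{-1}$. The two arguments are cognate, since the Stirling recurrence and that rational generating function carry the same information, but yours buys an explicit identification of the order polynomial as a complete homogeneous symmetric polynomial evaluated at $1,\ldots,m$, which meshes with the symmetric-function apparatus already set up in Section \ref{sec-Ferrers}; your restricted-growth-string bijection is a genuinely combinatorial addendum the paper does not offer and, as you note, would also give a bijective handle on the descent statistic behind Theorem \ref{stirlssnkkEuler}, which the paper instead derives by citing the Gessel--Stanley generating function. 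The bookkeeping points you flag (the factor $a_\ell$ rather than $a_\ell-1$, and the shift $m=n+1$) are handled correctly.
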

\begin{proof}
By observing Figure\ref{G06}, if $a_1=i$, $b_1$ has $i+1$ options from $0$ to $i$, we can easily obtain the following recursion:
\[
\mathrm{ehr}(\mathcal{O}(\bar{P}_{(k,k)}),n)=\sum_{i=0}^n(i+1) \mathrm{ehr}(\mathcal{O}(\bar{P}_{(k-1,k-1)}),i), \qquad \mathrm{ehr}(\mathcal{O}(\bar{P}_{(1,1)}),n)=\binom{n+2}{2}.
\]

In \cite[Chapter 1]{RP.Stanley}, the $S(n,k)$ satisfies the recursion $S(n,k)=kS(n-1,k)+S(n-1,k-1)$ for $n\geq 1$, and $S(n+2,n+1)=\binom{n+2}{2}$. Therefore, we have
\[
S(n+k+1,n+1)=(n+1)S(n+k,n+1)+S(n+k,n)=\sum_{i=0}^n(i+1)S(i+k,i+1).
\]
Furthermore, $\mathrm{ehr}(\mathcal{O}(\bar{P}_{(k,k)}),n)$ and $S(n+k+1,n+1)$ satisfy the same recursion and the same initial conditions. This completes the proof.
\end{proof}

\begin{thm}\label{stirlssnkkEuler}
Let $k\geq 1$. The Ehrhart series of $\mathcal{O}(\bar{P}_{(k,k)})$ is given by
\begin{align*}
\mathrm{Ehr}(\mathcal{O}(\bar{P}_{(k,k)}),x)=\frac{\sum_{i=1}^kT(k,i)x^{i-1}}{(1-x)^{2k+1}},
\end{align*}
where $T(k,i), 1\leq i\leq k$ is the second-order Eulerian triangle \cite{Gessel78}. The $T(k,i)$ is the sequence [A008517] on the OEIS.
\end{thm}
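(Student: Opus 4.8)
The plan is to reduce the statement to a generating-function identity for Stirling numbers of the second kind, and then to recognize the resulting recursion for the numerator as the defining recursion of the second-order Eulerian triangle. By Theorem~\ref{stirlssnk} we have $\mathrm{Ehr}(\mathcal{O}(\bar{P}_{(k,k)}),x)=\sum_{n\geq 0}S(n+k+1,n+1)x^{n}$, so it suffices to show that this series equals $\bigl(\sum_{i=1}^{k}T(k,i)x^{i-1}\bigr)/(1-x)^{2k+1}$. First I would record, from the proof of Theorem~\ref{stirlssnk}, the recursion
\[
\mathrm{ehr}(\mathcal{O}(\bar{P}_{(k,k)}),n)=\sum_{i=0}^{n}(i+1)\,\mathrm{ehr}(\mathcal{O}(\bar{P}_{(k-1,k-1)}),i),\qquad \mathrm{ehr}(\mathcal{O}(\bar{P}_{(1,1)}),n)=\binom{n+2}{2},
\]
whose base case gives $\mathrm{Ehr}(\mathcal{O}(\bar{P}_{(1,1)}),x)=1/(1-x)^{3}$, matching the claimed formula when $k=1$.

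Next I would convert this into a recursion for the Ehrhart series itself. Writing $F_{k}(x):=\mathrm{Ehr}(\mathcal{O}(\bar{P}_{(k,k)}),x)$, the coefficientwise map $a_{i}\mapsto(i+1)a_{i}$ is $A(x)\mapsto\frac{d}{dx}\bigl(xA(x)\bigr)$ and forming partial sums is multiplication by $1/(1-x)$, so the displayed recursion translates into
\[
F_{k}(x)=\frac{1}{1-x}\,\frac{d}{dx}\!\left(x\,F_{k-1}(x)\right).
\]
Substituting the ansatz $F_{k}(x)=N_{k}(x)/(1-x)^{2k+1}$ and clearing denominators yields the polynomial recursion
\[
N_{k}(x)=\bigl(1+(2k-2)x\bigr)\,N_{k-1}(x)+x(1-x)\,N_{k-1}'(x),\qquad N_{1}(x)=1,
\]
and an easy induction (tracking the leading term) shows $N_{k}$ is a polynomial of degree $k-1$. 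Writing $N_{k}(x)=\sum_{i\geq 1}c_{k,i}x^{i-1}$ and extracting the coefficient of $x^{i-1}$, the recursion becomes
\[
c_{k,i}=i\,c_{k-1,i}+(2k-i)\,c_{k-1,i-1},\qquad c_{1,1}=1,
\]
with the convention $c_{k-1,i}=0$ unless $1\leq i\leq k-1$. This is precisely the recursion and initial condition of the second-order Eulerian triangle \cite[A008517]{Sloane23} (cf.\ \cite{Gessel78}); hence $c_{k,i}=T(k,i)$ and $N_{k}(x)=\sum_{i=1}^{k}T(k,i)x^{i-1}$. Since the numerator of an Ehrhart series is determined by the series, this proves the theorem.

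The computations involved — differentiating $xF_{k-1}$, cancelling one factor $(1-x)$, and reading off coefficients — are all routine. The one point requiring care is the index bookkeeping: aligning the row/column convention of \cite[A008517]{Sloane23} on the OEIS with the $x^{i-1}$-indexing in the statement, and verifying that the recursion reproduces the boundary entries correctly (in particular $T(k,1)=1$ and $T(k,k)=k!$), so that $N_{k}$ genuinely has degree $k-1$ with no spurious terms. As an alternative one could bypass the recursion entirely and invoke the classical Worpitzky-type identity expressing $\sum_{n\geq0}S(n+k+1,n+1)x^{n}$ as the second-order Eulerian numerator over $(1-x)^{2k+1}$, but the argument above is preferable since it uses only what was already established in the proof of Theorem~\ref{stirlssnk}.
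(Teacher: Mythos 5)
Your proof is correct, but it follows a genuinely different route from the paper's. The paper's proof is a two-line reduction: it quotes the Gessel--Stanley identity $\sum_{n\geq 0} S(n+k,n)x^{n}=\bigl(\sum_{i=1}^{k}T(k,i)x^{i}\bigr)/(1-x)^{2k+1}$ from \cite{Gessel78} and combines it with Theorem \ref{stirlssnk} via the index shift $n\mapsto n+1$ (using $S(k,0)=0$). You instead bypass that external identity entirely: starting from the recursion $\mathrm{ehr}(\mathcal{O}(\bar{P}_{(k,k)}),n)=\sum_{i=0}^{n}(i+1)\,\mathrm{ehr}(\mathcal{O}(\bar{P}_{(k-1,k-1)}),i)$ already established in the proof of Theorem \ref{stirlssnk}, you translate it into $F_{k}(x)=\frac{1}{1-x}\frac{d}{dx}\bigl(xF_{k-1}(x)\bigr)$, derive the numerator recursion $N_{k}=(1+(2k-2)x)N_{k-1}+x(1-x)N_{k-1}'$, and read off $c_{k,i}=i\,c_{k-1,i}+(2k-i)c_{k-1,i-1}$, which is exactly the defining recursion of [A008517] with $T(1,1)=1$. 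I checked the differentiation and the coefficient extraction; both are right, and the boundary checks $T(k,1)=1$, $T(k,k)=k!$, and $\deg N_{k}=k-1$ all follow from the recursion as you say. What your approach buys is a self-contained proof that does not even need Theorem \ref{stirlssnk} as a black box (only its internal recursion) and that in effect reproves the Gessel--Stanley generating-function identity; what the paper's approach buys is brevity, at the cost of importing that identity from the literature. Either is acceptable; yours is arguably more informative.
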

\begin{proof}
By \cite{Gessel78}, we have 
\[
\sum_{n=0}^{\infty} S(n+k,n) x^n=\frac{\sum_{i=1}^kT(k,i)x^{i}}{(1-x)^{2k+1}},
\] 
where $S(n+k, n)$ is a Stirling number of the second kind. Then the theorem holds by Theorem \ref{stirlssnk}.
\end{proof}

For $1\leq k\leq 6$, $\mathrm{ehr}(\mathcal{O}(\bar{P}_{(k,k)}),n)$ corresponds to sequences [A000217], [A001296], [A001297], [A001298], [A112494], and [A144969] on the OEIS, respectively.

{\small \textbf{Acknowledgements:}
We are grateful to Monica M.Y. Wang of Beijing Institute of Graphic Communication and Jun Ma of Shanghai Jiao Tong University for many useful suggestions. This work was partially supported by NSFC(12071311).

\end{document}